\def\dist{\mathop{\rm dist}\nolimits}
\def\supp{\mathop{\rm supp}\nolimits}
\newcommand{\R}{\mathbb{R}}
\newcommand{\Rd}{ \mathbb{R}^{d}}
\newcommand{\N}{\mathbb{N}}
\newcommand{\indyk}[1]{\mathds{1}_{#1}}
\newcommand{\sfera}{ \mathds{S}}
\newcommand{\Fourier}{ {\mathcal{F}}}
\newcommand{\nubounded}[1]{\bar{\nu}_{#1}}
\newcommand{\scalp}[2]{#1\cdot#2}
 \def\dist{\mathop{\rm
    dist}\nolimits} \def\diam{\mathop{\rm diam}\nolimits}
\newtheorem{lemat}{\indent\sc Lemma}[section]
\newtheorem{prop}[lemat]{\indent\sc Proposition}
\newtheorem{twierdzenie}[lemat]{\indent\sc Theorem}
\newcounter{conum} \setcounter{conum}{-1}
\renewcommand{\Re}{\ensuremath{\operatorname{Re}}}
\begin{document}

\title{Estimates of densities for L\'evy processes with lower intensity of large jumps}
\author{Pawe{\l} Sztonyk}
\footnotetext{ Pawe{\l} Sztonyk \\
  Faculty of Pure and Applied Mathematics,
  Wroc{\l}aw University of Technology,
  Wybrze{\.z}e Wyspia{\'n}\-skie\-go 27,
  50-370 Wroc{\l}aw, Poland.\\
  {\rm e-mail: Pawel.Sztonyk@pwr.edu.pl} \\
}\date{December 30, 2015}
\maketitle

\begin{center}
  Abstract
\end{center}
\begin{scriptsize}
  We obtain general lower estimates of transition densities of jump L\'evy processes. We use them for
	processes with L\'evy measures having bounded support, processes with exponentially 
	decaying L\'evy measures for large times and for processes with high intensity of small jumps for small times.
\end{scriptsize}

\footnotetext{2000 {\it MS Classification}:
Primary 60G51, 60E07; Secondary 60J35, 47D03, 60J45 .\\
%47D03 Groups and semigroups of linear operators
%31C05 Harmonic, subharmonic, superharmonic functions
%60J35 Transition functions, generators and resolvents 
%60G51 Processes with independent increments
{\it Key words and phrases}: L\'evy process, L\'evy measure, tempered stable process, semigroup of measures, transition density, heat kernel.\\
P. Sztonyk was supported by the National Science Center (Poland) grant on the basis of the decision No. DEC-2012/07/B/ST1/03356.
}
%----------------------------------------------------------------
\section{Introduction}%\label{Intro}

Let $d\in\{1,2,\dots\}$ and $\nu$ be a symmetric L\'evy measure on $\Rd$, i.e.,
\begin{equation}\label{Levy1}
  \int_{\Rd} \left(1\wedge |y|^2\right)\,\nu(dy) < \infty,
\end{equation}
and $\nu(-D)=\nu(D)$ for every Borel set $D\subset\Rd$. We always assume also that $\nu(\Rd)=\infty$.

We consider the convolution semigroup of probability measures $\{ P_t,\, t\geq 0 \}$ with the Fourier transform $\Fourier(P_t)(\xi)=\int_{\Rd} e^{i\scalp{\xi}{y}}P_t(dy)=\exp(-t\Phi(\xi))$, where
\begin{displaymath}%\begin{equation}\label{eq:Phi_2}
  \Phi(\xi) =    \int_{\Rd} \left(1-\cos(\scalp{\xi}{y}) \right)\nu(dy) ,\quad \xi\in\Rd.
\end{displaymath}%\end{equation}

There exists a L\'evy process $\{X_t,\,t\geq 0\}$
corresponding to $\{P_t,\,t\geq 0\}$, i.e., $P_t$ is the transition function of $X_t$.

%We investigate here densities (heat kernels) of the semigroup $\{P_t,\,t\geq 0\}$
%for L\'evy measures with lower intensity of large jumps, including those with a bounded support.

We denote
$$
  \Psi(r)=\sup_{|\xi|\leq r}  \Phi(\xi),\quad r>0.
$$
It follows directly from the definition that $\Psi(|\xi|)\geq \Phi(\xi)$ for $\xi\in\Rd$. An opposite 
inequality $\Psi(|\xi|)\leq c \Phi(\xi)$ holds also in many typical examples but is not true in general.

We will often use the following estimate obtained in Proposition 1 in \cite{KSz2} (see also Lemma 6 in \cite{Grz2013})
\begin{equation}\label{eq:PsiH}
	 L_0 H(r) \leq \Psi(r) \leq 2 H(r),\quad r>0,
\end{equation}
where
\begin{displaymath}
	H(r) = \int \left( 1 \wedge r^2|y|^2 \right) \, \nu(dy),
\end{displaymath}
and $L_0$ depends only on the dimension $d$. 

We note that $\Psi$ is continuous and nondecreasing and $\sup_{r>0} \Psi(r)=\infty$, since $\nu(\Rd)=\infty$ 
(it follows easily from \eqref{eq:PsiH}). 
Let $\Psi^{-1}(s)=\sup\{r>0: \Psi(r)=s\}$ for
$s\in (0,\infty)$ so that $\Psi(\Psi^{-1}(s))=s$ and $\Psi^{-1}(\Psi(s))\geq s$ for $s>0$. Define
$$
  h(t)=\frac{1}{\Psi^{-1}\left(\frac{1}{t}\right)},\quad t>0.
$$

We often use the following condition which is satisfied under mild assumptions on $\Phi$ (see Lemma 5 in \cite{KSz2} or Lemma 5 in \cite{KSz3}). 

\[
  \begin{array}{c}
  \textit{There exist constants } M_0> 0, \textit{   and   } t_p \in (0,\infty] \textit{ such that} \\
  \int_{\Rd} e^{-t\Phi(\xi)}|\xi|\, d\xi \leq M_0 \left(h(t)\right)^{-d-1},
  \quad t\in (0,t_p).
  \end{array}
  \tag{{\bf A1}}\label{Assum}
\]

We note that \eqref{Assum} yields in particular that $\nu(\Rd)=\infty$ and the existence of the transition densities $p_t$ of $P_t$ for all $t>0$.

The main results of the present paper are the following two lower estimates of 
the transition densities. They contain universal minimal bounds
for jump L\'evy processes.

\begin{twierdzenie}\label{main1}
  For every symmetric L\'evy measure $\nu$ such that \eqref{Assum} 
	holds with $t_p=\infty$ there exists positive constants $c_1-c_4$, such that
\begin{displaymath}
	p_t(x)\geq c_1 h(t)^{-d} e^{\frac{-c_2|x|^2}{t}},\quad t>c_3,\, |x|\leq c_4 t,
\end{displaymath}
where $p_t$ is the density of $P_t$.
\end{twierdzenie}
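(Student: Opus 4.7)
The plan is to combine a Fourier-analytic on-diagonal lower bound, a near-diagonal extension via \eqref{Assum}, and a chaining (semigroup iteration) argument, in the spirit of classical Gaussian lower bounds for heat kernels.

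For the on-diagonal bound, Fourier inversion gives $p_t(0)=(2\pi)^{-d}\int_{\Rd}e^{-t\Phi(\xi)}\,d\xi$; restricting to $|\xi|\le 1/h(t)$, where $\Phi(\xi)\le \Psi(|\xi|)\le \Psi(1/h(t))=1/t$ and hence the integrand is at least $e^{-1}$, yields $p_t(0)\ge c_0 h(t)^{-d}$. For the near-diagonal extension, symmetry and \eqref{Assum} give $|p_t(x)-p_t(0)|\le C|x|\int|\xi|e^{-t\Phi(\xi)}\,d\xi\le CM_0|x|h(t)^{-d-1}$, so $p_t(x)\ge c_0 h(t)^{-d}/2$ on the disc $|x|\le \eta h(t)$ for some absolute $\eta>0$.

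For $\eta h(t)<|x|\le c_4 t$, I would use chaining. Iterate $p_t=p_{t/n}^{*n}$, pick points $z_i=ix/n$ ($i=0,\dots,n$) along the segment from $0$ to $x$, and restrict the integration variables $y_i$ to the balls $B(z_i,(\eta/4)h(t/n))$. When $|x|/n\le (\eta/2)h(t/n)$, each increment $y_{i+1}-y_i$ lies in the near-diagonal regime of the previous step, so $p_{t/n}(y_{i+1}-y_i)\ge (c_0/2)h(t/n)^{-d}$. Multiplying $n$ such factors and $n-1$ ball volumes yields
\[
p_t(x)\ge C_0 A^{n}h(t/n)^{-d}
\]
for constants $C_0>0$ and $A\in(0,1)$ independent of $n$, $x$, $t$. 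From \eqref{eq:PsiH}, the map $r\mapsto \Psi(r)/r^2$ is essentially nonincreasing (since $H(r)/r^2$ is monotone), so $h(s)\le c_* h(t)\sqrt{s/t}$ for $s\le t$; equivalently, $h(t)^2/t$ is essentially nondecreasing, giving $h(t)\ge c_{**}\sqrt t$ for $t>c_3$. Choosing $n$ proportional to $|x|^2/t$, one has $h(t/n)^{-d}\le c_*^d n^{d/2}h(t)^{-d}$ and $A^n=e^{-n|\log A|}\approx e^{-c|x|^2/t}$; combining these, with a slight enlargement of $c_2$ to absorb the polynomial factor $n^{d/2}$, yields the claimed bound.

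The main obstacle is ensuring feasibility of the chaining: \eqref{eq:PsiH} gives only one-sided control on $h(t/n)$, so existence of an integer $n$ with $|x|/n\le (\eta/2)h(t/n)$ and $n\lesssim |x|^2/t$ is not automatic. The linear restriction $|x|\le c_4 t$ with a sufficiently small $c_4$, combined with the lower bound $h(t)\gtrsim\sqrt t$ for $t>c_3$, is precisely what lets the chain close with $n\sim|x|^2/t$; balancing the constants $c_3$, $c_4$, $\eta$, and $A$ so that the chained estimate dominates $c_1 h(t)^{-d}e^{-c_2|x|^2/t}$ throughout the region $\eta h(t)<|x|\le c_4 t$ is where the bulk of the technical bookkeeping lies.
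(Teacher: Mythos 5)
Your proposal is correct and essentially reproduces the paper's argument: the paper simply chooses $F(s)=c_1 s$ (using $\Psi(s)\ge c_1 s^2$ for small $s$, which is the dual form of your observation $h(t)\gtrsim\sqrt t$ for large $t$) and invokes Lemma~\ref{below_exp2b}, whose proof packages precisely your on-diagonal plus near-diagonal step (this is \eqref{eq:p_est_below1}, proved in \cite{KSz2} by the Fourier inversion and gradient bound you describe) followed by your chaining along the segment from $0$ to $x$. One small slip at the end: the inequality $h(t/n)^{-d}\le c_*^d n^{d/2} h(t)^{-d}$ points the wrong way for a lower bound and is unnecessary --- since $h$ is nondecreasing and $n\ge 1$, one has $h(t/n)^{-d}\ge h(t)^{-d}$ directly, so the chained estimate $p_t(x)\ge C_0 A^n h(t/n)^{-d}$ immediately yields $C_0 A^n h(t)^{-d}$ with no polynomial factor to absorb.
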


Let $\nu = \nu_s + \nu_c$ where $\nu_s$ and $\nu_c$ are singular and continuous part of $\nu$
with respect to the Lebesgue measure on $\Rd\setminus\{ 0 \}$, respectively, and
let $\frac{d\nu_c}{dm}$ denote the Radon-Nikodym derivative of $\nu_c$.

\begin{twierdzenie}\label{main2}
  Assume that \eqref{Assum} holds and there exists $r_0>0$ such that
	\begin{displaymath}
	  \inf_{0<|y|<r_0} \frac{d\nu_c}{dm}(y) >0.	
	\end{displaymath}
Then there exist constants $c_1-c_4$, such that
\begin{displaymath}
  p_t(x) \geq c_1 e^{-c_2|x|\log\left(\frac{c_3 |x|}{t}\right)},
\end{displaymath}
for $t\in(0,t_p)$, and $|x|\geq\max\left\{ r_0, c_4 t \right\}$.
\end{twierdzenie}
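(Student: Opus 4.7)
The plan is to use an iterated convolution (chain-of-balls) argument, writing $p_t(x)$ via Chapman--Kolmogorov as an $n$-fold convolution of the one-step densities $p_{t/n}$ and restricting the integration to a tubular neighbourhood of the straight segment $[0,x]$, with $n\asymp|x|/r_0$ chosen so that each individual step has length at most $r_0/4$ and falls into the region where the positive lower bound on $d\nu_c/dm$ can be exploited.

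\textbf{Step 1: a one-step lemma.} I first plan to prove that there exist $s_0,\rho,c>0$ such that
\[
  p_s(z)\ \geq\ c\,s,\qquad s\in(0,s_0],\ |z|\leq\rho.
\]
Setting $\kappa:=\inf_{0<|y|<r_0}\tfrac{d\nu_c}{dm}(y)>0$, put $\nu_1:=\kappa\,\mathds{1}_{B(0,r_0)}\,dm$; since $\nu_1\leq\nu_c\leq\nu$, the measure $\nu_2:=\nu-\nu_1$ is a symmetric L\'evy measure, and we may decompose $X_t=X^1_t+X^2_t$ into independent L\'evy processes of L\'evy measures $\nu_1,\nu_2$. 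The process $X^1$ is compound Poisson with total intensity $\lambda_1=\kappa|B(0,r_0)|$ and jump law uniform on $B(0,r_0)$. Since $\Phi(\xi)-\Phi^2(\xi)=\int(1-\cos(\scalp{\xi}{y}))\,\nu_1(dy)\leq 2\lambda_1$, \eqref{Assum} is inherited by $\nu_2$ (with comparable constants on bounded time intervals) and $X^2_s$ admits a density $p^2_s$. Conditioning on $X^1$ making exactly one jump in $[0,s]$,
\[
  p_s(z)=\mathbb{E}\bigl[p^2_s(z-X^1_s)\bigr]\ \geq\ e^{-\lambda_1 s}\lambda_1 s\cdot\frac{1}{|B(0,r_0)|}\int_{B(0,r_0)}p^2_s(z-y)\,dy.
\]
After the substitution $w=z-y$, the integral equals $\mathbb{P}(X^2_s\in B(z,r_0))$, which for $|z|\leq r_0/2$ dominates $\mathbb{P}(X^2_s\in B(0,r_0/2))$, and the latter tends to $1$ as $s\to 0^+$ by right-continuity of the paths of $X^2$. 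Choosing $\rho=r_0/2$ and $s_0$ suitably small yields the stated bound.

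\textbf{Step 2: the chain.} Set $n:=\lceil 4|x|/r_0\rceil$, $\delta:=r_0/16$, and $x_k:=kx/n$ for $k=0,\dots,n$, so that $|x|\geq r_0$ forces $n\geq 4$ and $|x_k-x_{k-1}|=|x|/n\leq r_0/4$. For $y_k\in B(x_k,\delta)$ with $y_0=0$ and $y_n=x$ one has $y_k-y_{k-1}\in B(x/n,2\delta)\subset B(0,r_0/2)$. The hypothesis $|x|\geq c_4 t$, with $c_4:=r_0/(4s_0)$, ensures $t/n\leq s_0$. By Chapman--Kolmogorov iterated $n-1$ times and Step 1,
\[
  p_t(x)\ \geq\ \int_{\prod_{k=1}^{n-1}B(x_k,\delta)}\prod_{k=1}^{n}p_{t/n}(y_k-y_{k-1})\,dy_1\cdots dy_{n-1}\ \geq\ \bigl(c\,t/n\bigr)^{n}|B(0,\delta)|^{n-1}.
\]
Taking logarithms, using $n\asymp|x|/r_0$ and $|x|\geq\max(r_0,c_4 t)$ so that $\log(|x|/t)$ is bounded below and absorbs the additive $O(1)$ constants into the argument of the logarithm, yields
\[
  \log p_t(x)\ \geq\ -C_1\frac{|x|}{r_0}\log\!\Bigl(\frac{C_2|x|}{t}\Bigr),
\]
which after relabelling of constants is precisely the required estimate.

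\textbf{Main obstacle.} The non-routine ingredient is Step 1: one needs $p_s(z)$ to decay only \emph{linearly} in $s$, uniformly on a small ball around the origin, not just at $0$. The splitting $X=X^1+X^2$ isolates the single-jump contribution responsible for the $\lambda_1 s$ factor, but one still has to check (i) that removing the finite piece $\nu_1$ preserves hypothesis \eqref{Assum}, so that $p^2_s$ exists and the conditioning argument makes sense, and (ii) the qualitative spreading estimate $\mathbb{P}(X^2_s\in B(0,r_0/2))\to 1$ as $s\to 0^+$. Once Step 1 is in hand, the chaining argument and the arithmetic that converts $(t/n)^n$ into the desired exponential form are essentially mechanical.
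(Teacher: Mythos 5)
Your proposal is correct and reaches the estimate by the same chaining skeleton, but the way you obtain the crucial one‑step lower bound is genuinely different from the paper's. The paper applies Proposition~\ref{th:p_est_below} (itself established in the Appendix via a Fourier argument after splitting $\nu$ into a small‑jump and a large‑jump part) together with the hypothesis $\kappa_0>0$ to get $p_s(y)\geq c\,s\,\kappa_0$ on the \emph{annulus} $\eta h(s)\leq |y|<r_0$; this forces the chain to keep consecutive increments bounded \emph{below} as well as above, which is why the paper tracks $\frac{3}{7}r_0\leq |x|/n<\frac{3}{4}r_0$ and imposes a condition ensuring $\eta h(t/n)\leq r_0/6$. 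Your Step~1 instead carves off the finite piece $\nu_1=\kappa\,\mathds{1}_{B(0,r_0)}\,dm$, decomposes $X=X^1+X^2$ with $X^1$ compound Poisson, and conditions on exactly one $X^1$-jump to get $p_s(z)\geq c\,s$ uniformly on a full \emph{ball} $|z|\leq r_0/2$; this is self-contained (no appeal to Proposition~\ref{th:p_est_below} or to~\eqref{eq:p_est_below1}), and because the estimate holds down to $z=0$ the chaining can afford to be sloppier about the lower size of the increments. Your argument does lean on two mild facts you correctly flag as needing a check — that $p^2_s$ exists (which follows since $\Phi^2\geq\Phi-2\lambda_1$ makes $e^{-s\Phi^2}$ integrable whenever $e^{-s\Phi}$ is) and that $\mathbb{P}(X^2_s\in B(0,r_0/2))\to 1$ as $s\to 0^+$ (stochastic continuity of a L\'evy process at $t=0$); both are standard, so the proof is complete. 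The trade‑off is that the paper's route recycles machinery already built for other results, whereas yours is more elementary and arguably more transparent for this particular theorem.
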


We prove the theorems in Section \ref{section_lower}. Note that explicite values of the constants and also more specific estimates can be found in Lemma \ref{below_exp2a}, Lemma \ref{below_exp2b} and Lemma \ref{pbelow_expxlog} in Section \ref{section_lower}. We emphasise also that lower bounds obtained for 
transition densities in previous papers depend usually on the local behaviour of 
the L\'evy measure $\nu$ (see \cite{S11},\cite{W07}) or hold only for isotropic processes (\cite{BGR13},\cite{ChKum08}). In particular, 
Proposition \ref{th:p_est_below} below gives the lower bound in terms of the L\'evy measure:
$p_t(x) \geq c_1 t h(t)^{-d} \nu(B(x,c_2h(t)))$, for $|x|>c_3h(t)$ and
$t\in (0, t_p)$,
and the both above theorems deliver useful estimates 
even in regions on which $\nu$ is not supported.

Although the above estimates of transition densities hold for wide class of L\'evy processes
one can hardly expect that they are optimal for processes with heavy tails of the L\'evy
measure since it is known that $\nu(dx) = g(x) dx$ is a vague limit of measures $P_t(dx)/t = (p_t(x)/t) dx$
as $t\to 0^+$ outside the origin, and in fact the both
functions $p_t(x)$ and $tg(x)$ share typically the same asymptotic properties for such processes for small times
(see results and discussions in \cite{KSz2}, \cite{KSz3}). 
In particular for $\alpha$ - stable processes with $\nu(dx) \asymp |x|^{-d-\alpha}dx$
we have $p_t(x) \asymp t^{-d/\alpha}(1+t^{-1/\alpha}|x|)^{-d-\alpha}$.
%and for tempered stable processes with
%L\'evy measure $\nu(dx)=|x|^{-d-\alpha}(1+|x|)^{-\kappa}e^{-|x|^\beta}$, where $\alpha\in (0,2)$,
%$\beta\in (0,1]$ and $\kappa<\alpha + (d-1)/2$ we have 
%$p_t(x) \asymp t|x|^{-d-\alpha}(1+|x|)^{-\kappa}e^{-|x|^\beta}$ for $|x|>1$, $t<1$. 
Therefore
the above estimates are useful mainly for processes with truncated jumps or with exponentially decaying intensity of jumps and large times where the asymptotic of $p(t)$ and $tg(x)$ can differ significantly. In the next sections we give some applications and show that the above results
are optimal or close to optimal for the considered processes using existing upper estimates.

The first natural application are processes with truncated L\'evy measures. In \cite{ChKimKum2} 
the authors obtained both side estimates of transition densities for processes with truncated isotropic stable 
L\'evy measure. Here we extend the results of
\cite{ChKimKum2} to much wider class of processes with L\'evy measure with bounded support and not necessarily absolutely continuous. The lower estimates which follow easily from the above inequalities and 
Proposition \ref{th:p_est_below} are presented 
in Section \ref{app_truncated} in Theorem \ref{tw:zdolu_truncated}. Next we prove upper estimates in Lemma \ref{th:p_est_above} and Theorem \ref{tw:zgory_truncated}. Our method is based on the results of \cite{KSch2012} where the authors complement in very useful way 
the known results
of Carlen, Kusuoka, and Stroock (\cite{CKS}). Assuming additionally that the L\'evy measure
is absolutely continuous 
we obtain more precise following estimates which can be regarded as the third main result of the present paper. 
We note that similar estimate was announced (without a proof) in Theorem 1.4 of \cite{ChKimKum1}.

We will use here the following condition on a function $f:(0,r_0]\to (0,\infty)$.
\[
  \begin{array}{c}
  \textit{There exist constants } M_1,M_2 \geq  0 \textit{ and } d<\beta_1\leq \beta_2<d+2, \textit{ such that} \\
    M_1 \left(\frac{R}{r}\right)^{\beta_1} \leq \frac{f(r)}{f(R)} \leq M_2 \left( \frac{R}{r} \right)^{\beta_2},\quad r_0 \geq R\geq r >0.
  \end{array}
  \tag{{\bf A2}}\label{Assum2}
\]
We will use the notation $f\asymp g$ to indicate that there exist constants $c_1,c_2$ such that
$c_1 g \leq f\leq c_2 g$.

\begin{twierdzenie}\label{tw:obustronne}
   Assume that $\supp(\nu)\subset B(0,r_0)$, 
  $\nu$ is symmetric and absolutely continuous with respect to the Lebesgue measure on $\Rd\setminus\{0\}$ with
  a density $\bar\nu$ and there exists a nonincreasing function $f:(0,r_0]\to (0,\infty]$ 
  such that
  \begin{displaymath}
	  \bar\nu(x) \asymp f(|x|),\quad 0<|x|<r_0,
  \end{displaymath}
  where $f$ satisfies \eqref{Assum2} 
  %on $(0,r_0]$ with a constants $d<\beta_1\leq \beta_2 < d+2$, 
  and $\kappa = \inf_{s\in (0,r_0]} f(s) > 0$.
  
  Then $P_t$ is for every $t>0$ absolutely continuous with a density $p_t$ which satisfies the following estimates.
  \begin{enumerate}
    \item There exists $\eta_*$ such that for $|x| \leq \eta_* h(t)$ we have
      \begin{displaymath}
	      p_t(x) \asymp h(t)^{-d}.
      \end{displaymath}
    \item There exists $C^*$ such that for $\eta_* h(t) \leq |x| \leq r_0,\, t\leq t_1$ we have
      \begin{displaymath}
        p_t(x) \asymp t f(|x|),
      \end{displaymath}
      where $t_1 = r_0 / C^*$.
     \item There exist positive constants $c_1-c_{4}$  such that
      \begin{displaymath}
	      c_{1} h(t)^{-d} \exp\left\{-\frac{ c_{2} |x|^2}{ t}\right\} \leq p_t(x)
	      \leq c_{3} h(t)^{-d} \exp\left\{-\frac{ c_{4} |x|^2}{ t}\right\},
      \end{displaymath}
      for $\eta_* h(t) \leq |x| \leq C^* t, t\geq t_1$.
      \item There exist positive constants $c_5-c_{10}$ such that 
      \begin{displaymath}
	      c_5 \exp\left\{-c_6 |x| \log\left(\frac{c_7 |x|}{t}\right)\right\} \leq p_t(x) \leq 
	      c_8 \exp\left\{-c_9 |x| \log\left(\frac{c_{10} |x|}{t}\right)\right\},
      \end{displaymath}
      for $|x|\geq r_0 \vee C^* t,\, t>0$.
  \end{enumerate}
\end{twierdzenie}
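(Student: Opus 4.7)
The plan is to treat each of the four regimes separately, pairing in each case a lower bound supplied by the earlier results of the paper with a matching upper bound. The lower bounds come essentially for free: part (1) from on-diagonal Fourier analysis, part (2) from Proposition \ref{th:p_est_below} combined with the scaling \eqref{Assum2}, part (3) from Theorem \ref{main1}, and part (4) from Theorem \ref{main2}. The substantive new work is therefore the upper bounds, all of which will exploit the truncation $\supp\nu\subset B(0,r_0)$ in the spirit of Kaleta-Schilling.

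For part (1), the upper estimate $p_t(x)\leq p_t(0)\leq c\,h(t)^{-d}$ follows from $p_t(0)=(2\pi)^{-d}\int e^{-t\Phi(\xi)}\,d\xi$ together with \eqref{Assum} and \eqref{eq:PsiH}. For the matching lower bound on $|x|\leq \eta_* h(t)$, I would bound $|p_t(x)-p_t(0)|\leq c|x|\,h(t)^{-d-1}$ using the same Fourier integral with one extra factor of $|\xi|$, then choose $\eta_*$ small enough that $p_t(x)\geq\tfrac12 p_t(0)$. For part (2), the lower bound is immediate from Proposition \ref{th:p_est_below} because under \eqref{Assum2} one has $\nu(B(x,c h(t)))\asymp h(t)^d f(|x|)$ whenever $|x|\geq \eta_* h(t)$. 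The matching upper bound $p_t(x)\leq c\,t\,f(|x|)$ I plan to obtain by a Meyer-type decomposition, splitting $\nu$ at radius $|x|/2$, writing $P_t$ as a convolution of a small-jumps density (controlled by part (1)) with a compound Poisson factor for the large jumps, and observing that for $t\leq t_1=r_0/C^*$ the two-jump term is of lower order while the one-jump term yields exactly $t\,\nu(\,\cdot\,)\asymp t\,f(|x|)$ by \eqref{Assum2}.

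For parts (3) and (4), I would apply the exponential-tilt argument of Carlen-Kusuoka-Stroock as refined by Kaleta-Schilling. Because $\nu$ has compact support, for every $\eta\in\Rd$ the tilted characteristic exponent, obtained by substituting $\xi\mapsto\xi-i\eta$ in $\Phi$, is well-defined and finite; Fourier inversion then yields a bound of the form $p_t(x)\leq c\,e^{-\eta\cdot x}\,e^{tK(\eta)}\,h(t)^{-d}$ with $K(\eta)$ a symbol-type quantity controllable via the truncation. Optimising in $\eta$ produces a Legendre-type rate: when $|x|\leq C^* t$ the optimal $\eta$ is small and gives the Gaussian rate $|x|^2/t$, yielding the upper bound in part (3); when $|x|\geq C^* t$ the truncation forces the optimiser to be large and the rate grows like $|x|\log(|x|/t)$, yielding part (4). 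The matching lower bounds are Theorems \ref{main1} and \ref{main2}.

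The main obstacle is the optimisation and bookkeeping at this last step: one has to verify that a tilted analogue of \eqref{Assum} holds uniformly in $\eta$, which is the core technical content inherited from Kaleta-Schilling, and then align the constants $\eta_*$, $C^*$, $t_1$ and $c_1$--$c_{10}$ so that the four regimes dovetail without leaving gaps on $(0,\infty)\times\Rd$. A secondary obstacle, already present in Part (2), is that the Meyer decomposition only controls the \emph{mass} transferred by large jumps; one must use \eqref{Assum2} together with the absolute continuity of $\nu_c$ to pass from this mass bound to a pointwise upper estimate of $p_t(x)$ in terms of $f(|x|)$.
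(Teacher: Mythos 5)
Your overall plan matches the paper's route closely: lower bounds come from \eqref{eq:p_est_below1}, Proposition~\ref{th:p_est_below}, and the first two main theorems, while the upper bounds in the far regimes come from the exponential-tilt estimate of Knopova--Schilling (Theorem~6 of \cite{KSch2012}) using the compact support of~$\nu$; the paper packages these as Theorems~\ref{tw:zdolu_truncated} and~\ref{tw:abstrunc} and then matches the constants. A small slip: the tilt argument is from Knopova--Schilling, not Kaleta--Schilling.

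There is, however, a genuine gap: Theorem~\ref{tw:obustronne} does not assume \eqref{Assum}, yet you invoke it repeatedly --- in part~(1) for the on-diagonal two-sided bound, implicitly in part~(2) via Proposition~\ref{th:p_est_below}, and in parts~(3)--(4) via Theorems~\ref{main1} and~\ref{main2}, all of which require \eqref{Assum}. The paper devotes Lemma~\ref{l:Assum2Assum} to deriving \eqref{Assum} (with $t_p=\infty$) from the standing hypotheses, and that derivation rests on Lemma~\ref{l:doubling}'s identity $\bar\nu(x)\asymp\Psi(1/|x|)/|x|^{d}$ for $|x|<r_0$, which in turn yields $\Phi(\xi)\asymp\Psi(|\xi|)$ and a doubling-type growth of $\Psi$ needed to sum the dyadic annuli in the integral $\int e^{-t\Phi(\xi)}|\xi|\,d\xi$. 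Your proposal nowhere establishes any of this, so the machinery you rely on is not yet available under the theorem's hypotheses.

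A second, related gap is in the upper bound of part~(2). After your Meyer split at radius $|x|/2$ you argue the two-or-more-jump terms are lower order, which is fine, but you never address the \emph{zero-jump} term $e^{-t\bar\nu(\Rd)}\tilde p_t(x)$, which decays only at a sub-Gaussian rate $e^{-c|x|/h(t)\log(1+c|x|/h(t))}$. Showing this is dominated by $t\,f(|x|)$ for all $|x|>\eta_1 h(t)$ is the main work in the proof of Theorem~\ref{tw:abstrunc}, and it again uses Lemma~\ref{l:doubling} together with \eqref{eq:dbl_Psi} to compare $t\,h(t)^{d}f(|x|/4)$ with the exponential term via the ratio $\Psi(\eta_1/|x|)/\Psi(1/h(t))$. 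Without that comparison, the asserted upper bound $p_t(x)\lesssim t\,f(|x|)$ does not follow from the decomposition alone. Finally, while you correctly note that the four regimes must dovetail, the paper actually does the bookkeeping (it reconciles the lower-bound threshold $C_*$ with the upper-bound threshold $C^*>C_*$, and treats the strip $C_*t\le|x|\le C^*t$ and the interval $t_1\le t\le t_0$ separately), so this step should not be left implicit.
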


The second interesting application are L\'evy processes with exponentially decaying L\'evy measure which we discuss in Section \ref{section_tempered}.
We extend here the results obtained previously
in \cite{ChKimKum1} and \cite{KSz3}.
The sharp estimates obtained in \cite{KSz3} hold only for small times whereas the results of \cite{ChKimKum1} contain
only absolutely continuous L\'evy measures. Here we obtain in Theorem \ref{t:esttemp1} both side estimates for large times and not
necessarily absolutely continuous L\'evy measures. 
%An interesting phenomenon is shown here: for large times the upper estimate
%is not always given by $t\bar{\nu}(x)$, where $\bar{\nu}$ denotes the density of the L\'evy measure which is the case for %small $t$ (see \cite{KSz3}). Instead, in some
%regions the density is estimated by a kernel of a Gaussian type.

\begin{twierdzenie}\label{t:esttemp1} 
  Let
  \begin{displaymath}
    \nu(A) \asymp \int_0^\infty \int_\sfera \indyk{A}(s\theta) s^{-1-\alpha} (1+s)^{\kappa} e^{-m s^\beta}\, 
		ds\mu(d\theta),
  \end{displaymath} 
  where $\mu$ is bounded, symmetric and nondegenerate measure on the unit sphere $\sfera$, 
  $m > 0$, $\beta\in (0,1]$, $\alpha\in (0,2)$,
  $\kappa\in (-\infty,1+\alpha]$. Then
  there exist constants $c_1-c_6,\eta,t_0$ such  that
  \begin{equation}\label{eq:temp1}
    p_t(x) \leq c_1 t^{-d/2} \left( e^{ \frac{-c_2|x|^2}{t}  } 
                + e^{\frac{-m|x|^\beta}{2\cdot 4^\beta}} \right),
  \end{equation}
  for $x\in\Rd,\, t> t_0,$
  and
  \begin{equation}\label{eq:temp2}
	  p_t(x) \geq c_3 t^{-d/2} \left( e^{ \frac{-c_4|x|^2}{t}  } 
                +  t \nu (B(x,c_5 \sqrt{t})) \right),\quad  \eta \sqrt{t} \leq |x| \leq c_6 t,\, t>t_0.
  \end{equation}
  In particular, if 
  \begin{equation}\label{eq:approxtemp}
    \nu(dx)\asymp |x|^{-d-\alpha} (1+|x|)^{\kappa} e^{-m |x|^\beta}\, dx,\quad x\in\Rd\setminus\{ 0\},
  \end{equation} 
  then there exist $c_7-c_9$ such that
  \begin{equation}\label{densityversion}
   c_7 t^{-d/2} \left( e^{ \frac{-c_8 |x|^2}{t}  } 
                + e^{-c_9 |x|^\beta} \right)  \leq p_t(x) \leq c_1 t^{-d/2} \left( e^{ \frac{-c_2|x|^2}{t}  } 
                + e^{\frac{-m|x|^\beta}{2\cdot 4^\beta}} \right), \quad x\in\Rd,\, t> t_0.
  \end{equation}
\end{twierdzenie}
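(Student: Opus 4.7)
The plan is to reduce the theorem to a combination of Theorem \ref{main1}, Proposition \ref{th:p_est_below}, and a separate upper estimate that exploits the exponentially decaying tail of $\nu$. The first task is to verify \eqref{Assum} with $t_p=\infty$ and to identify the large-time asymptotics of $h$. Since $e^{-m|y|^\beta}$ forces $\int_{\Rd}|y|^2\,\nu(dy)<\infty$, for small $r$ the quantity $H(r)$ is comparable to $r^2$; non-degeneracy of $\mu$ also yields $\Phi(\xi)\geq c(|\xi|^2\wedge 1)$. By \eqref{eq:PsiH} one obtains $\Psi(r)\asymp r^2$ near $0$, hence $h(t)\asymp\sqrt{t}$ for $t$ large, and the Fourier integral $\int e^{-t\Phi(\xi)}|\xi|\,d\xi$ is bounded by $c\,t^{-(d+1)/2}\asymp h(t)^{-d-1}$. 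For small $t$, \eqref{Assum} follows from the standard arguments of Lemma 5 in \cite{KSz2}. In particular $h(t)^{-d}\asymp t^{-d/2}$ for $t>t_0$, which is the prefactor in \eqref{eq:temp1}--\eqref{eq:temp2}.

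For the lower bound \eqref{eq:temp2} I would apply Theorem \ref{main1} to obtain $p_t(x)\geq c\,h(t)^{-d}e^{-c'|x|^2/t}\asymp t^{-d/2}e^{-c_4|x|^2/t}$ in the range $|x|\leq c_6 t$, $t>t_0$, which is the Gaussian summand. The second summand comes from Proposition \ref{th:p_est_below} (stated later in the paper), which yields $p_t(x)\geq c\,t\,h(t)^{-d}\nu(B(x,c_2 h(t)))\asymp t^{1-d/2}\nu(B(x,c_5\sqrt{t}))$ for $|x|\geq c_3 h(t)$; choosing $\eta\geq c_3$ ensures both bounds hold simultaneously, and summing gives \eqref{eq:temp2}.

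The technical core is the upper bound \eqref{eq:temp1}. My approach is Meyer's decomposition at scale $r$ proportional to $|x|$: writing $\nu=\nu^{\leq r}+\nu^{>r}$, $\lambda=\nu(\{|y|>r\})$, and $q_t^{(r)}$ for the density of the L\'evy semigroup generated by $\nu^{\leq r}$, one has
\[
  p_t(x)=e^{-\lambda t}\sum_{n=0}^{\infty}\frac{t^n}{n!}\,q_t^{(r)}\ast\bigl(\nu^{>r}\bigr)^{\ast n}(x).
\]
The small-jump semigroup has uniformly bounded jumps, so the Carlen--Kusuoka--Stroock argument in the form of \cite{KSch2012} supplies sub-Gaussian bounds $q_t^{(r)}(y)\leq C\,t^{-d/2}e^{-c|y|^2/t}$, producing the first summand of \eqref{eq:temp1}. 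For the tail, a direct integration gives $\lambda\leq C e^{-m r^\beta/2}$, and the constant $2\cdot 4^\beta$ in the exponent arises by taking $r=|x|/4$ and using the subadditivity $(a+b)^\beta\leq a^\beta+b^\beta$ for $\beta\in(0,1]$, which bounds every $(\nu^{>r})^{\ast n}$-tail by $e^{-m|x|^\beta/(2\cdot 4^\beta)}$ after absorbing the Gaussian spread of $q_t^{(r)}$.

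Finally, to pass from \eqref{eq:temp2} to the density version \eqref{densityversion} I would integrate \eqref{eq:approxtemp} over $B(x,c_5\sqrt{t})$ to obtain
\[
  \nu(B(x,c_5\sqrt{t}))\geq c\,t^{d/2}\,|x|^{-d-\alpha}(1+|x|)^\kappa\,e^{-m|x|^\beta-c''\sqrt{t}\,|x|^{\beta-1}}
\]
for $|x|\geq\eta\sqrt{t}$. In the range $\eta\sqrt{t}\leq|x|\leq c_6 t$ both the correction $\sqrt{t}\,|x|^{\beta-1}$ and the polynomial prefactor can be absorbed into a mildly enlarged exponential $e^{-c_9|x|^\beta}$; for $|x|<\eta\sqrt{t}$, Theorem \ref{main1} alone already gives $p_t(x)\geq c\,t^{-d/2}$, which dominates. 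The upper inequality in \eqref{densityversion} is identical to \eqref{eq:temp1}. The main obstacle is the upper bound: producing the sharp exponent $m/(2\cdot 4^\beta)$ requires careful bookkeeping of the convolutions $(\nu^{>r})^{\ast n}$ together with sub-Gaussian estimates on $q_t^{(r)}$ that are uniform in the truncation scale $r\asymp|x|$, since this scale varies with the spatial variable; the remaining steps are routine consequences of $h(t)\asymp\sqrt{t}$ and the previously established abstract results.
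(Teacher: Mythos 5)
Your lower-bound strategy (Theorem \ref{main1} for the Gaussian part plus Proposition \ref{th:p_est_below} for the L\'evy-measure part, and a direct integration of \eqref{eq:approxtemp} for \eqref{densityversion}) matches the paper's, and the verification of \eqref{Assum} with $h(t)\asymp\sqrt{t}$ for $t$ large is also essentially identical.

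For the upper bound \eqref{eq:temp1}, however, your argument has a genuine gap. You take Meyer's decomposition at the $x$-dependent scale $r=|x|/4$ and claim that bounded jumps alone yield a sub-Gaussian bound $q^{(r)}_t(y)\leq C\,t^{-d/2}e^{-c|y|^2/t}$ ``producing the first summand,'' then bound the $n\geq 1$ terms by the total mass $\lambda^n$. But for a L\'evy measure with support in $B(0,r)$, the Knopova--Schilling/CKS bound is sub-Gaussian only in the range $|y|\lesssim M_5M_6\,t$; for $|y|$ beyond that one only gets the Poissonian decay $e^{-\frac{|y|}{4r}\log\frac{r|y|}{2tm_0}}$ (this is exactly Lemma \ref{th:p_est_above} of the paper). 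With $r=|x|/4$ and evaluating the $n=0$ term at $y=x$, that Poissonian bound degenerates to $\exp\{-\log(|x|^2/(8tm_0))\}=8tm_0/|x|^2$, a polynomial rate, whereas \eqref{eq:temp1} demands the stretched-exponential $e^{-m|x|^\beta/(2\cdot 4^\beta)}$. So for $|x|\gg t$ your control of $q^{(r)}_t(x)$ is far too weak, and the sub-Gaussian claim cannot be ``uniform in the truncation scale'' as you suppose.

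The paper closes exactly this gap by decomposing at the fixed scale $r=h(t)\asymp\sqrt{t}$ and deriving \emph{two} estimates for $\tilde p^r_t$ from the Knopova--Schilling variational formula: the sub-Gaussian one for $|x|\lesssim t^{(\beta+1)/2}$, and a stretched-exponential one $\tilde p^r_t(x)\leq \tilde p^r_t(0)\,e^{-m|x|^\beta/2^\beta}$ for $|x|\gtrsim t^{1/(2-\beta)}$. The latter comes from the \emph{exponential decay} (not merely the bounded support) of $\tilde\nu_r$: one checks $\int|y|^2e^{|\xi||y|}\,\tilde\nu_r(dy)\leq L$ for $|\xi|\leq 2^{-\beta/2}m|x|^{\beta-1}$ using $|\xi||y|\leq 2^{-\beta/2}m|y|^\beta$ for $|y|<|x|$ and $\beta\leq 1$, and then takes $\xi_1=2^{-\beta/2}m|x|^{\beta-2}x$ in $v_t(\xi,x)$. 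The two ranges overlap for $t$ large, giving a bound for \emph{all} $x$, and the convolution with the large-jump part $\bar P_t$ is then handled by a layer-cake integral against $\bar P_t(B(x,\rho))\leq c\,t\,f(|x|/4)\rho$ rather than the crude total-mass bound $\lambda^n$. Your approach could be repaired by extracting a stretched-exponential bound on $q^{(r)}_t$ from the exponential moment argument (not just the bounded support), but as written the far-field control is missing and the proof of \eqref{eq:temp1} does not go through.
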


The last application is given in Section \ref{section_high}. We do not
assume here anything (except of \eqref{Levy1}) on the behavior of $\nu$ outside of the ball $B(0,1)$ and
we consider the processes with high intensity of small jumps, i.e., such that $ \nu(dx) \asymp |x|^{-d-2} \left[\log\left(\frac{2}{|x|}\right)\right]^{-\beta}\, dx,$ for $|x|<1$,
where $\beta > 1 $. We extend the results obtained previously in \cite{Mimica1} 
and \cite{KSz2}. In this case sharp estimates for large times were already known. We investigate here the difficult case
of small $t$ and using Lemma \ref{below_exp2b} we get a new lower bound. This estimate seems to be optimal
in view of new results obtained in \cite{Mimica_new} for a particular case of subordinated Brownian motion.

Let us also mention other related results. Estimates of transition densities for stable L\'evy processes has been studied, e.g., in \cite{BG60, PT69, H94, H03, G93, GH93, D91, W07, BS2007}. Recent papers \cite{S10, S11, KnopKul, KSch2012, KSz1, Knop13} contain the estimates for more general classes of L\'evy processes, including tempered processes with intensities of jumps lighter than polynomial. The paper \cite{BGR13} deals with estimates of densities for isotropic unimodal L\'evy processes, while the papers \cite{Mimica1, KSz1} discusses the processes with higher intensity of small jumps, remarkably different than stable one. In \cite{ChKimKum1, ChKum08, KSz1} the authors investigate the case of more general, non-necessarily space homogeneous, symmetric jump Markov processes with jump intensities dominated by those of isotropic stable processes. Estimates of kernels for processes which are solutions of SDE driven by L\'evy processes were obtained in \cite{Picard97}. For estimates of derivatives of L\'evy densities we refer the reader to \cite{S10a, BJ07, SchSW12, KSz1, KR13, Knop13}. In \cite{JKLSch2012} an interesting geometric interpretation of the transition densities for symmetric L\'evy processes was given.

%---------------------------------------------------------------------------------------------------------
\section{Preliminaries}

For a set $A\subset\Rd$ we denote $\delta(A)=\dist(0,A)=\inf\{|y|:\:y\in A\}$ and
$\diam(A)=\sup\{|y-x|:\:x,y\in A\}$. By ${\mathcal{B}}(\Rd )$ we denote Borel sets in $\Rd$.
%Here $\scalp{\xi}{y}$ denotes the usual inner product of $\xi,y\in\Rd$.

General estimates of the densities at the origin were obtained in \cite{KSz2}. It follows from Lemma 6 and 7 in \cite{KSz2} that if
\eqref{Assum} holds then there exist constants $c_1=c_1(d),c_2=c_2(d,M_0),\theta=\theta(d,M_0)$ such that
\begin{equation}\label{eq:p_est_below1}
  c_1 \left(h(t)\right)^{-d} \leq p_t(x) \leq c_2 \left(h(t)\right)^{-d}\quad  \mbox{for}\quad |x|< \theta h(t),\, t\in (0,t_p).
\end{equation}

Lower estimates of densities by the L\'evy measure were also obtained in \cite{KSz2}. 
We include here a modified version of Theorem 2 of \cite{KSz2}. The proof differs only in a few details
and we give it in the Appendix.

\begin{prop}\label{th:p_est_below}
If \eqref{Assum} holds
then for every $\eta>0$ there exist constants $L_1=L_1(d,\eta,M_0)$, $L_2=L_2(d,\eta,M_0)<\eta$ 
such that
\begin{equation}\label{eq:p_est_below0}
  p_t(x) \geq L_1 t\left(h(t)\right)^{-d} \nu (B(x,L_2 h(t)))\quad  
  \mbox{for}\quad  |x|\geq \eta h(t),\, t\in (0,t_p).
\end{equation}
%\end{equation}
\end{prop}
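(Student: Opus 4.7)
The plan is to adapt the proof of Theorem 2 of \cite{KSz2} via the standard L\'evy--It\^o splitting at scale $r \asymp h(t)$. Write $\nu = \nu^{(r)} + \nu_r$ with $\nu^{(r)} = \nu|_{B(0,r)}$, $\nu_r = \nu|_{B(0,r)^c}$, and decompose $X = Y + Z$ into independent symmetric L\'evy processes with these L\'evy measures, $Z$ being a compound Poisson process with total intensity $\lambda_r := \nu(B(0,r)^c)$. Assuming $Y$ has a density $p_t^Y$, convolution and the first-order term in the Poisson expansion of $P_t^Z$ yield
\begin{equation*}
  p_t(x) \,=\, \int_{\Rd} p_t^Y(x-y)\, P_t^Z(dy) \,\geq\, e^{-t\lambda_r}\, t \int_{\Rd} p_t^Y(x-y)\, \nu_r(dy).
\end{equation*}

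The key step is to apply the origin estimate \eqref{eq:p_est_below1} to $Y$. This requires verifying that $Y$ still satisfies \eqref{Assum} and that $h^Y(t) \asymp h(t)$ with constants depending only on $d$ and $M_0$. Since $\nu^{(r)} \leq \nu$ one has $\Phi^Y \leq \Phi$, so $h^Y(t) \leq h(t)$; conversely, for $s \leq 1/r$ the removed contribution satisfies $H(s) - H^{(r)}(s) \leq \nu(B(0,r)^c) \leq 2 H(1/r)$, which can be absorbed into a constant factor of $H(s)$ at the scale $s \asymp 1/h(t)$, giving $H^{(r)}(s) \asymp H(s)$ and hence, by \eqref{eq:PsiH}, $\Psi^Y \asymp \Psi$ in the relevant range. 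Choosing $r = L_2 h(t)$ with $L_2 \in (0,\eta)$ (and $L_2 < \theta$ from \eqref{eq:p_est_below1}, possibly after shrinking), the same chain of estimates bounds $t\lambda_r \leq C$, while \eqref{eq:p_est_below1} applied to $Y$ yields $p_t^Y(x-y) \geq c\, h(t)^{-d}$ whenever $|x-y| < L_2 h(t)$.

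Since $|x| \geq \eta h(t)$ and $L_2 < \eta$, the ball $B(x, L_2 h(t))$ is contained in $B(0,r)^c$, so $\nu_r$ agrees with $\nu$ on it. Restricting the integral above to this ball produces
\begin{equation*}
  p_t(x) \,\geq\, c\, e^{-t\lambda_r}\, t\, h(t)^{-d}\, \nu\bigl(B(x, L_2 h(t))\bigr) \,\geq\, L_1\, t\, h(t)^{-d}\, \nu\bigl(B(x, L_2 h(t))\bigr),
\end{equation*}
which is the required estimate.

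The main obstacle is the comparison $h^Y(t) \asymp h(t)$ and the transfer of \eqref{Assum} to $Y$, because the inequality $\Phi^Y \leq \Phi$ goes the wrong way for bounding $\int e^{-t\Phi^Y(\xi)}|\xi|\, d\xi$. The workaround is the factorization $e^{-t\Phi(\xi)} = e^{-t\Phi^Y(\xi)}\, e^{-t(\Phi(\xi) - \Phi^Y(\xi))}$ combined with $\Phi - \Phi^Y \leq 2\lambda_r \asymp 1/t$, which costs only a constant factor $e^{-2}$ and transfers \eqref{Assum} from $X$ to $Y$ with a new constant depending only on $d$ and $M_0$; this is exactly the place where the ``few details'' alluded to after the proposition statement have to be filled in.
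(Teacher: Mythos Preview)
Your overall strategy --- split at scale $r\asymp h(t)$, use the first term of the compound Poisson expansion for the large-jump part, and a near-origin lower bound for the small-jump density --- is exactly the paper's. The gap is in the sentence ``$h^Y(t)\asymp h(t)$''. With $r=L_2 h(t)$ and $L_2<\eta$ (and the proposition must cover every $\eta>0$, in particular $\eta<1$), you have $r<h(t)$, hence $1/r>1/h(t)$ and $H(1/r)\geq H(1/h(t))$. Your subtraction then gives $H^{(r)}(1/h(t))\geq H(1/h(t))-\nu(B(0,r)^c)$, and the right-hand side can be negative; indeed
\[
  H^{(r)}\bigl(1/h(t)\bigr)=\int_{|y|<L_2 h(t)}\bigl(|y|/h(t)\bigr)^2\,\nu(dy)
\]
need not be a fixed fraction of $H(1/h(t))$ under \eqref{Assum} alone (no lower scaling of $\nu$ near $0$ is assumed). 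So the black-box application of \eqref{eq:p_est_below1} to $Y$ only yields a good ball of radius $\theta^Y h^Y(t)$, which you cannot compare to $L_2 h(t)$.

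The paper avoids this entirely by not invoking \eqref{eq:p_est_below1} for $Y$. It uses both sides of the Fourier sandwich
\[
  e^{-t\Phi(\xi)}\ \leq\ \Fourier(\tilde p^{\,r}_t)(\xi)\ \leq\ e^{2t\lambda_r}\,e^{-t\Phi(\xi)}
\]
(the left inequality is immediate since $\int_{|y|<r}(1-\cos)\leq\int(1-\cos)$). The left side gives $\tilde p^{\,r}_t(0)=(2\pi)^{-d}\!\int \Fourier(\tilde p^{\,r}_t)\,d\xi\geq c\,h(t)^{-d}$; the right side, combined with \eqref{Assum}, gives $|\nabla\tilde p^{\,r}_t|\leq C e^{2t\lambda_r} h(t)^{-d-1}$. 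Both bounds are in the \emph{original} scale $h(t)$, so a mean-value argument yields $\tilde p^{\,r}_t(y)\geq \tfrac{c}{2}\,h(t)^{-d}$ for $|y|\leq c' e^{-2t\lambda_r} h(t)$, with no comparison of $h^Y$ to $h$ needed. (The paper takes $r=h(at)$ so that $t\lambda_r\leq 1/(aL_0)$ and then picks $a$ small to ensure $h(at)+c' e^{-c/a}h(t)\leq \eta\,h(t)$.) Your last paragraph already contains the upper half of this sandwich; the missing piece is to also use the lower half for $\tilde p^{\,r}_t(0)$ instead of routing through $h^Y$.

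A minor additional point: with the same $r$ as truncation radius and ball radius, the inclusion $B(x,L_2 h(t))\subset B(0,r)^c$ requires $|x|>2L_2 h(t)$, so you need $L_2<\eta/2$, not merely $L_2<\eta$.
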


The following proposition was proved in \cite{KSz2}, Theorem 1.
\begin{prop}\label{prop1} Assume that
$\nu$ is a symmetric L\'evy measure such that
\begin{equation}\label{eq:nu_estim}
  \nu(A) \leq M_3 f(\delta(A))[\diam(A)]^{\gamma},\quad A\in{\mathcal{B}}(\Rd ),
\end{equation}
where $\gamma\in[0,d]$, and $f:\:[0,\infty)\to [0,\infty]$ is nonincreasing function satisfying
\begin{equation}\label{eq:tech_assumpt}
  \int_{|y|>r} f\left(s\vee |y|-\frac{|y|}{2} \right) \,\nu(dy) 
  \leq 
  M_4 f(s) \Psi\left(\frac{1}{r} \right),\quad s>0,r>0,
\end{equation}
for some constants $M_3,M_4>0$.
If \eqref{Assum} holds then 
there exist constants
$c_1=c_1(d,M_0,M_3,M_4),c_2=c_2(d,M_0),c_3=c_3(d,M_0)$ such that
\begin{eqnarray*}
  p_t(x) 
  & \leq & c_1 \left(h(t)\right)^{-d} \min\left\{ 1, t\left[h(t)\right]^{\gamma}
           f\left(|x|/4\right)
          + \, e^{-c_2 \frac{|x|}{h(t)}\log\left(1+\frac{c_3|x|}{h(t)}\right)}
            \right\},\\
   &     & x\in\Rd,\, t\in (0,t_p).
\end{eqnarray*}
\end{prop}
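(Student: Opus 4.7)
My plan is a Meyer-type decomposition at scale $r=h(t)$, combined with a complex Fourier inversion to control the truncated density, and the hypotheses \eqref{eq:nu_estim}--\eqref{eq:tech_assumpt} to bound the large-jump contribution. Write $\nu = \nu^{(r)} + \bar\nu^{(r)}$ with $\nu^{(r)}=\indyk{B(0,r)}\nu$, and let $\Phi^{(r)}$ and $p_t^{(r)}$ be the characteristic exponent and density of the corresponding truncated process. Meyer's formula then gives
\begin{equation*}
  p_t(x) = \sum_{k=0}^\infty e^{-t\bar\nu^{(r)}(\Rd)}\frac{t^k}{k!}\bigl((\bar\nu^{(r)})^{*k}*p_t^{(r)}\bigr)(x).
\end{equation*}

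The first step is to establish two bounds on $p_t^{(r)}$: a uniform bound $p_t^{(r)}(x)\leq C h(t)^{-d}$, inherited from \eqref{eq:p_est_below1} applied to the truncated process (for which one checks that $h^{(r)}(t)\asymp h(t)$), and the sub-exponential decay
\begin{equation*}
 p_t^{(r)}(x)\leq C h(t)^{-d}\exp\!\Bigl(-c_2\tfrac{|x|}{h(t)}\log\!\bigl(1+c_3\tfrac{|x|}{h(t)}\bigr)\Bigr),\qquad |x|\geq \eta h(t).
\end{equation*}
The decay bound is obtained by shifting the Fourier inversion contour into the complex domain: since $\supp\nu^{(r)}\subset B(0,r)$, the exponent $\Phi^{(r)}(\xi-i\lambda)$ is entire in $\lambda$ and one derives $\Re(\Phi^{(r)}(\xi-i\lambda)-\Phi^{(r)}(\xi))\leq c|\lambda|^2 e^{|\lambda|r}H(1/r)$. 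Choosing $\lambda=-\rho x/|x|$ and optimizing gives $\rho\sim r^{-1}\log(1+|x|/r)$; then $H(1/h(t))\asymp 1/t$ from \eqref{eq:PsiH} together with \eqref{Assum} bounds $\int e^{-t\Phi^{(r)}(\xi)}d\xi$ by a constant multiple of $h(t)^{-d}$, yielding the stated decay.

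The $k=0$ term of the Meyer expansion is $e^{-t\bar\nu^{(r)}(\Rd)}p_t^{(r)}(x)$, which already provides the exponential part of the claimed bound. For $k\geq 1$, I would split each convolution $\int p_t^{(r)}(x-y)(\bar\nu^{(r)})^{*k}(dy)$ according to whether $|x-y|\leq h(t)$. On $|x-y|>h(t)$ the decay bound applies to $p_t^{(r)}(x-y)$ and, summed over $k$ with $(t\bar\nu^{(r)}(\Rd))^k/k!$ factors, contributes at most the exponential term. On $|x-y|\leq h(t)$ the uniform bound gives
\begin{equation*}
  \int_{|x-y|\leq h(t)}p_t^{(r)}(x-y)(\bar\nu^{(r)})^{*k}(dy) \leq C h(t)^{-d}\,(\bar\nu^{(r)})^{*k}(B(x,h(t))),
\end{equation*}
and one inductively bounds $(\bar\nu^{(r)})^{*k}(B(x,h(t)))\leq (C\Psi(1/r))^{k-1}M_3 f(|x|/4)h(t)^{\gamma}$. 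At the final jump one invokes \eqref{eq:nu_estim} with a set of diameter $\approx h(t)$ at distance $\geq |x|/4$ from the origin, producing $M_3 f(|x|/4)h(t)^{\gamma}$; each preceding jump contributes a factor $\leq C\Psi(1/r)$ via \eqref{eq:tech_assumpt}, whose role is precisely to exchange one integration against $\bar\nu^{(r)}$ for $\Psi(1/r)\asymp 1/t$. Multiplying by $t^k/k!$ and summing the geometric series (convergent since $t\Psi(1/r)\asymp 1$) produces the term $th(t)^\gamma f(|x|/4)\cdot h(t)^{-d}$.

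I expect the main obstacle to be the inductive estimation of $(\bar\nu^{(r)})^{*k}(B(x,h(t)))$: the hypothesis \eqref{eq:tech_assumpt} is crafted so that integrating $s\mapsto f(\delta(\cdot))$ against one copy of $\bar\nu^{(r)}$ trades $f$ for $f\cdot\Psi(1/r)$, but one must arrange the $k$ integrations so that the last $f$ remains evaluated at $\approx|x|/4$ rather than getting progressively smaller arguments. A secondary subtlety is the complex Fourier step: the exponential $e^{|\lambda|r}$ in the quadratic remainder forces the optimal $\rho$ to satisfy $\rho r\sim \log(1+|x|/r)$, which is exactly what produces the logarithmic factor in the final exponent.
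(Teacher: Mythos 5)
The paper does not reprove this proposition; it simply cites Theorem 1 of \cite{KSz2}. Judged on its own merits, your strategy --- a Meyer decomposition at scale $r=h(t)$, a uniform bound $Ch(t)^{-d}$ and a super-exponential decay bound on $p_t^{(r)}$ obtained by shifting the Fourier contour, and then controlling the compound-Poisson factor $\bar P_t^{(r)}$ through \eqref{eq:nu_estim} and \eqref{eq:tech_assumpt} --- is the right machinery and matches what the paper itself does in the closely analogous proof of Theorem \ref{t:esttemp1}.

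There is, however, a genuine gap in the reassembly step. You split $\int p_t^{(r)}(x-y)\,(\bar\nu^{(r)})^{*k}(dy)$ at $|x-y|\leq h(t)$ and assert that the region $|x-y|>h(t)$, after summing over $k$, contributes at most the exponential term. It does not: for $|x-y|$ just above $h(t)$ the decay bound degenerates to $p_t^{(r)}(x-y)\leq C h(t)^{-d}e^{-c\log(1+c)}\asymp h(t)^{-d}$, while $(\bar\nu^{(r)})^{*k}$ is concentrated near the origin, which for $|x|>2h(t)$ lies entirely inside $\{y:|x-y|>h(t)\}$; hence $\int_{|x-y|>h(t)}(\bar\nu^{(r)})^{*k}(dy)\approx\bar\nu^{(r)}(\Rd)^k$, and summing only reproduces the crude $Ch(t)^{-d}$, not the $|x|$-dependent decay. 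To recover the exponential term you must use that $\bar P_t^{(r)}(B(x,\rho))$ is small for $\rho<|x|/2$. The clean fix, and exactly what the paper does for Theorem \ref{t:esttemp1}, is a layer-cake: if $g$ is the radial profile dominating $p_t^{(r)}$, then $\int g(|x-y|)\,\bar P_t^{(r)}(dy)=\int_0^1\bar P_t^{(r)}\bigl(B(x,g^{-1}(s))\bigr)\,ds$, into which one inserts $\bar P_t^{(r)}(B(x,\rho))\lesssim t f(|x|/4)\rho^\gamma$ for $\rho<|x|/2$ and the trivial bound $1$ otherwise --- the induction over $k$ where \eqref{eq:tech_assumpt} earns its keep is precisely in proving this bound on $\bar P_t^{(r)}(B(x,\rho))$, not on $(\bar\nu^{(r)})^{*k}(B(x,h(t)))$ alone. (You correctly flag that this induction is delicate, and it is: $\delta(A-y)\geq\delta(A)\vee|y|-|y|/2$ is false in general, so \eqref{eq:tech_assumpt} does not apply verbatim to $f(\delta(A-y))$.) A smaller point: the intermediate estimate $\Re\bigl(\Phi^{(r)}(\xi-i\lambda)-\Phi^{(r)}(\xi)\bigr)\leq c|\lambda|^2 e^{|\lambda|r}H(1/r)$ drops a factor $r^2$, since $\int_{|y|<r}|y|^2\,\nu(dy)\leq r^2 H(1/r)$; the corrected bound $c|\lambda|^2 r^2 e^{|\lambda|r}H(1/r)$ still yields the optimal $\rho\sim r^{-1}\log(1+|x|/r)$.
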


%---------------------------------------------------------------------------------------------
\section{Lower estimates}\label{section_lower}

The following lemma contains a lower estimate of densities $p_t$ in terms of
a function $F$ which is a lower bound for $\Psi(s)/s$. 

\begin{lemat}\label{below_exp2a}
Assume that \eqref{Assum} holds and there exists a strictly increasing continuous function $F: [0,\infty)\to [0,\infty)$ such that $F(0)=0$, 
$\lim_{s\to\infty} F(s)=\infty$ 
and 
\begin{equation}\label{condition1}
  F(s)\leq \frac{\Psi(s)}{s},\quad \mbox{for} \quad s\in [0,\infty).
\end{equation}
Then there are constants $c_i=c_i(d,M_0)$, $i=1,2$, such that
for every $\eta\in(0,\theta)$, where $\theta$ is the constant from \eqref{eq:p_est_below1}, we have
 
\begin{displaymath}%\begin{equation}\label{eq:p_est_below21}
  p_t(x) \geq c_1 h(t)^{-d} e^{-c_2 |x| F^{-1}(2|x|/(\eta t))/\eta},  
\end{displaymath}% \end{equation}
for $t\in \left(0, t_p \right)$ and $x\in\Rd$.
%\end{equation}
\end{lemat}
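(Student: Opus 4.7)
The plan is to exploit the semigroup identity $p_t=p_{t/n}^{\ast n}$ for a suitable integer $n$, and to reduce the required bound to repeated application of the near-origin estimate \eqref{eq:p_est_below1} along a discretised straight-line path from $0$ to $x$. When $|x|<\eta h(t)$, \eqref{eq:p_est_below1} already yields $p_t(x)\ge c_1 h(t)^{-d}$; moreover, from $\Psi(1/h(t))=1/t$ one gets $F(1/h(t))\le h(t)/t$, so $F^{-1}(h(t)/t)\ge 1/h(t)$, and the quantity $|x|F^{-1}(2|x|/(\eta t))/\eta$ is uniformly bounded in that range. Hence the claim follows in this regime after absorbing a bounded exponential into $c_1$, and I may assume $|x|\ge\eta h(t)$ from now on.

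The central algebraic step is to choose $n$ minimally so that $|x|/n\le \eta h(t/n)$. Rewriting $h(s)=1/\Psi^{-1}(1/s)$, this becomes $\Psi(\eta n/|x|)\ge n/t$, which, by the hypothesis $\Psi(\sigma)\ge \sigma F(\sigma)$, is implied by $F(\eta n/|x|)\ge |x|/(\eta t)$; inverting $F$ gives
$$n\ge \frac{|x|\,F^{-1}(|x|/(\eta t))}{\eta}.$$
I would set $n=\lceil |x|F^{-1}(|x|/(\eta t))/\eta\rceil$. One checks (using $|x|\ge\eta h(t)$ and the bound $F(1/h(t))\le h(t)/t$) that $N:=|x|F^{-1}(|x|/(\eta t))/\eta\ge 1$ in the regime under consideration, so $n\le 2N$, and monotonicity of $F^{-1}$ then delivers the desired upper bound $n\le 2|x|F^{-1}(2|x|/(\eta t))/\eta$, exactly the form required in the exponent.

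Having fixed $n$, I would introduce parameters $\xi_i\in B(0,r)$ for $i=1,\ldots,n-1$, where $r=\tfrac12(\theta-\eta)h(t/n)$, and set $y_i=(i/n)x+\xi_i$ (with $y_0=0$, $y_n=x$) and $z_i=y_i-y_{i-1}$. Then $|z_i|\le |x|/n+2r\le \theta h(t/n)$, so \eqref{eq:p_est_below1} applied at time $t/n\in(0,t_p)$ yields $p_{t/n}(z_i)\ge c_1 h(t/n)^{-d}$ for every $i$. Restricting the Chapman--Kolmogorov convolution to $y_i\in B((i/n)x,r)$ — a substitution of unit Jacobian — then gives
$$p_t(x)\ge \int_{B(0,r)^{n-1}}\prod_{i=1}^n p_{t/n}(z_i)\,d\xi_1\cdots d\xi_{n-1}\ge c_1\,h(t/n)^{-d}\,K^{n-1},\quad K=c_1 V_d\bigl((\theta-\eta)/2\bigr)^d,$$
with $V_d$ the volume of the unit ball in $\Rd$. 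Shrinking $r$ (equivalently, the constant $\gamma$ that defines it) within the admissible range keeps the condition on $n$ unchanged while forcing $K<1$. Using monotonicity of $h$ to replace $h(t/n)^{-d}$ by $h(t)^{-d}$ and writing $K^{n-1}=e^{-(n-1)\log(1/K)}$ with $n-1\le 2|x|F^{-1}(2|x|/(\eta t))/\eta$ yields the claimed inequality with $c_2=2\log(1/K)$.

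The step I expect to be most delicate is the algebraic inversion that extracts $F^{-1}$ from $|x|/n\le \eta h(t/n)$ and links $n$ to the exponent in the claimed bound; everything else is routine bookkeeping, principally splicing the two $|x|$-regimes together under a common pair $(c_1,c_2)$ and arranging $r$ so that $K<1$.
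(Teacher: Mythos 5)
Your approach is essentially the paper's: chain the near-origin estimate \eqref{eq:p_est_below1} along a discretised segment from $0$ to $x$, choosing the number of steps $n$ so that $|x|/n$ is comparable to $\eta h(t/n)$, and translate the constraint on $n$ into the claimed exponent via $F(s)\le\Psi(s)/s$. Your choice $n=\lceil N\rceil$ with $N=|x|F^{-1}(|x|/(\eta t))/\eta$ and the verification $N\ge1$ via $\Psi(1/h(t))=1/t$ are sound, and the conversion $n\le 2N\le 2|x|F^{-1}(2|x|/(\eta t))/\eta$ is correct. (In the regime $|x|<\eta h(t)$ you do not need the exponent to be bounded; since the exponential factor is automatically at most $1$, the claim follows immediately from \eqref{eq:p_est_below1}. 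Your computation in fact bounds that quantity from \emph{below}, which is the wrong direction for the argument you sketch, but harmlessly so.)

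The genuine gap is the $\eta$-dependence of your constants. Your ball radius $r=\tfrac12(\theta-\eta)h(t/n)$ degenerates as $\eta\uparrow\theta$, so $K=c_1V_d((\theta-\eta)/2)^d$ and hence $c_2=2\log(1/K)$ depend on $\eta$, whereas the statement asserts $c_i=c_i(d,M_0)$ uniformly over $\eta\in(0,\theta)$. Your remark about ``shrinking $r$'' does not repair this: any $\eta$-independent radius $\gamma h(t/n)$ must satisfy $\gamma\le(\theta-\eta)/2$, which fails for $\eta$ near $\theta$. The paper avoids this by actually using the coefficient $2$ inside $F^{-1}(2|x|/(\eta t))$ at the inversion stage rather than inserting it cosmetically at the end: choosing $n$ so that $\eta n/(2|x|)\ge F^{-1}(2|x|/(\eta t))$ gives $\Psi(\eta n/(2|x|))\ge (\eta n/(2|x|))\cdot 2|x|/(\eta t)=n/t$ and hence the sharper step bound $|x|/n\le\tfrac12\eta h(t/n)<\tfrac12\theta h(t/n)$, which leaves room for the fixed radius $\tfrac{\theta}{4}h(t/n)$ and a $K$ depending only on $(d,M_0)$. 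Replacing your choice of $n$ by, say, $n=\lceil 2|x|F^{-1}(2|x|/(\eta t))/\eta\rceil$ and tracking the factor of $2$ closes the gap.
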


We may also weaken the assumptions obtaining estimates on smaller domain. We give only the proof of Lemma 
\ref{below_exp2b} since the proof of Lemma \ref{below_exp2a} differs only in few details 
(one can just put $s_0=\infty$ here).

\begin{lemat}\label{below_exp2b}
Assume that \eqref{Assum} holds and there exists a constant $s_0\in (0,\infty)$ and a strictly increasing continuous function $F: [0,s_0] \to [0,\infty)$ such that $F(0)=0$, 
% \lim_{s\to\infty} F(s)=\infty$ 
and 
\begin{equation}\label{condition}
  F(s)\leq \frac{\Psi(s)}{s},\quad \mbox{for} \quad s\leq s_0.
\end{equation}
Then there are constants $c_i=c_i(d,M_0)$, $i=1,2$, such that
for every $\eta\in(0,\theta)$, where $\theta$ is the constant from \eqref{eq:p_est_below1}, we have
 
\begin{equation}\label{eq:p_est_below2}
  p_t(x) \geq c_1 h(t)^{-d} e^{-c_2 |x| F^{-1}(2|x|/(\eta t))/\eta},  
\end{equation}
for $t\in \left( \frac{1}{s_0F(s_0/2)}, t_p \right)$ and $|x| < \frac{\eta t F\left(s_0/2\right)}{2}$.
%\end{equation}
\end{lemat}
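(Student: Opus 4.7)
The natural approach is a standard chaining argument that builds the global lower bound from the near-diagonal estimate \eqref{eq:p_est_below1} using the semigroup identity. Split the time as $t=n\cdot(t/n)$ and write, via Chapman–Kolmogorov,
\[
 p_t(x) = \int_{\Rd}\!\!\cdots\!\int_{\Rd} p_{t/n}(z_1)\,p_{t/n}(z_2-z_1)\cdots p_{t/n}(x-z_{n-1})\,dz_1\cdots dz_{n-1}.
\]
Introduce the equally spaced chain points $y_i = (i/n)x$, restrict the integration to balls $B(y_i,r)$ with $r = \tfrac{\theta-\eta}{2}\,h(t/n)$, and observe that whenever $|x|/n \leq \eta h(t/n)$ every consecutive displacement satisfies $|z_i - z_{i-1}| \leq |x|/n + 2r \leq \theta h(t/n)$, so \eqref{eq:p_est_below1} gives $p_{t/n}(z_i-z_{i-1}) \geq c_1 h(t/n)^{-d}$ on the integration domain.

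Multiplying these $n$ lower bounds and the $n-1$ ball volumes $|B(0,r)| = c_d r^d$ yields
\[
 p_t(x) \geq c_1 h(t/n)^{-d} \bigl(c_1 c_d (\tfrac{\theta-\eta}{2})^d\bigr)^{n-1} \geq c\, h(t)^{-d}\, e^{-\kappa n},
\]
for some $\kappa=\kappa(d,M_0)>0$, using monotonicity of $\Psi^{-1}$ to pass from $h(t/n)^{-d}$ to $h(t)^{-d}$. So the whole game reduces to choosing $n$ as small as possible while enforcing the condition $|x|/n \leq \eta h(t/n)$, i.e.\ $\Psi(\eta n/|x|) \geq n/t$. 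Here is where assumption \eqref{condition} enters: for $\eta n/|x| \leq s_0$ we have $\Psi(\eta n/|x|) \geq (\eta n/|x|)F(\eta n/|x|)$, so it suffices to impose $F(\eta n/|x|) \geq |x|/(\eta t)$, i.e.\
\[
 n \geq (|x|/\eta)\,F^{-1}\!\bigl(|x|/(\eta t)\bigr).
\]
Choose $n$ as the least positive integer satisfying this inequality; the factor $2$ in $F^{-1}(2|x|/(\eta t))$ in the stated conclusion comes from absorbing the $+1$ rounding while keeping the constants $c_1,c_2$ independent of $F$ and $\eta$.

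The main obstacle, and the only delicate point, is bookkeeping the validity of all constraints simultaneously: \eqref{condition} is available only for arguments $\leq s_0$, so one must verify $\eta n/|x|\leq s_0$ after rounding; ensuring $n\geq 1$ (so that the bound is nontrivial) produces the lower cutoff $t > 1/(s_0 F(s_0/2))$; and the requirement that $F^{-1}(2|x|/(\eta t))$ lie in the domain $[0,s_0]$ of $F$ produces the upper cutoff $|x| < \eta t F(s_0/2)/2$. Once the arithmetic of these three conditions is aligned with the integer chosen above, the chaining argument delivers \eqref{eq:p_est_below2} with constants depending only on $d$ and $M_0$. (As the authors remark, putting $s_0=\infty$ and dropping the cutoff on $|x|,t$ yields Lemma~\ref{below_exp2a} by the same proof.)
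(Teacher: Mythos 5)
Your proposal follows the same route as the paper: a Chapman--Kolmogorov chaining argument built on the near-diagonal estimate \eqref{eq:p_est_below1}, with the number of time-steps $n$ determined from \eqref{condition} so that $|x|/n \le \eta\, h(t/n)$. That part is right, and the choice $n\gtrsim (|x|/\eta)F^{-1}(|x|/(\eta t))$ does indeed give $\Psi(\eta n/|x|)\ge n/t$ once you verify $\eta n/|x|\le s_0$, which is where the cutoff $|x|<\eta t F(s_0/2)/2$ is really used (not merely to keep $2|x|/(\eta t)$ in the domain of $F^{-1}$, but to guarantee $\eta n/|x|\le 2F^{-1}(|x|/(\eta t))<s_0$ so that \eqref{condition} can be invoked).

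The genuine gap is the case $(|x|/\eta)F^{-1}(|x|/(\eta t))<1$, where your ``least positive integer'' is $n=1$. Then the chaining formula collapses to $p_t(x)=p_t(x)$ and gives nothing; you must show directly that $|x|$ is within range of the near-diagonal bound \eqref{eq:p_est_below1}, i.e.\ that $|x|\le \eta h(t)<\theta h(t)$. This does \emph{not} follow from ``ensuring $n\ge 1$'' --- that is automatic by construction and delivers no bound --- and it is exactly here that the hypothesis $t>1/(s_0 F(s_0/2))$ does its real work. One cannot simply set $s=\eta/|x|$ in \eqref{condition} because $\eta/|x|$ may exceed $s_0$; the paper resolves this by using continuity and the $t$-cutoff to produce a point $s_1>|x|$ with $\frac{4s_1}{\eta}F^{-1}\bigl(\frac{2s_1}{\eta t}\bigr)=1$, then proving both $\eta/(2s_1)\le s_0$ and $s_1\le\frac{1}{2}\eta h(t)$. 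Your proof acknowledges that ``arithmetic'' remains but mischaracterizes where the constraints enter and leaves precisely this, the most delicate step, uncarried out; as written it is therefore incomplete.
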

\begin{proof}
  %Let $\eta_0$ be a constant such that \eqref{eq:p_est_below1} holds for $|x|<\eta_0 h(t)$. 
  Let $t\in \left( \frac{1}{s_0F(s_0/2)}, t_p \right)$, $\eta\in (0,\theta)$ and $x\in\Rd$ be such that $0<|x|<\eta t F(s_0/2)/2$. We first assume that
  $\frac{4|x|}{\eta}F^{-1}\left(\frac{2|x|}{\eta t}\right)\geq 1$ and let $n\in\N_0$ be such that
  $$
     2^n \leq \frac{4|x|}{\eta}F^{-1}\left(\frac{2|x|}{\eta t}\right) < 2^{n+1}.
  $$
  We have $F^{-1}(\frac{2|x|}{\eta t}) < \frac{\eta 2^{n+1} }{4|x|}$, and by  
  \eqref{condition} we obtain
  \begin{displaymath}
	  \frac{2|x|}{\eta t} < F\left(\frac{\eta 2^{n+1} }{4|x|}\right) 
	  = F\left(\frac{\eta 2^n }{2|x|}\right)\leq \Psi\left( \frac{\eta 2^n}{2|x|} \right)\frac{2|x|}{\eta 2^n},
  \end{displaymath}
  since $\frac{\eta 2^n}{2|x|} \leq 2 F^{-1}\left(\frac{2|x|}{\eta t}\right)<s_0$, hence $\frac{2^n}{t} < \Psi(\frac{\eta 2^n}{2|x|})$ and $\Psi^{-1}(\frac{2^n}{t})\leq \frac{\eta 2^n}{2|x|}$ which gives
  $$
    \frac{|x|}{2^n} \leq \frac{1}{2}\eta h(t/2^n) \leq \frac{1}{2} \theta h(t/2^n).
  $$
  
   Let $k=2^n$.
   It follows from (\ref{eq:p_est_below1}) that
  \begin{equation}\label{pom1}
    p_{t/k}(y) \geq c_1 h\left(t/k\right)^{-d},\, \text{ for }\,  |y|<\theta h\left(t/k\right).
  \end{equation}
  Having the above preparation we can use now the standard method which was used, e.g., in
  the proof of Theorem 3.6 in \cite{ChKimKum2}.
  Let $0=x_0,x_1,...,x_{k-1},x_{k}=x$ be such that $x_i=(i/k)x$. We have 
  $|x_{i+1}-x_i|=\frac{|x|}{k}\leq \frac{1}{2}\eta h(t/k) \leq \frac{1}{2} \theta h(t/k)$.
  Let $B_i=B(x_i,\frac{\theta}{4}h(t/k))$.
  Using the semigroup property of $p_t$ and \eqref{pom1} we get
  \begin{eqnarray*}
    p_t(x) 
    &  =   & \int ... \int p_{t/k}(y_1) p_{t/k}(y_2-y_1)...p_{t/k}(x-y_{k-1})\, dy_1dy_2...dy_{k-1} \\
    & \geq & \int_{B_1} ... \int_{B_{k-1}} p_{t/k}(y_1) p_{t/k}(y_2-y_1)...p_{t/k}(x-y_{k-1})\, dy_1dy_2...dy_{k-1} \\
    & \geq & \left(c_1 h\left(t/k\right)^{-d}\right)^{k} \left(\omega_d \left(\frac{\theta}{4}h(t/k)\right)^d\right)^{k-1} \\
    &   =  & h(t/k)^{-d} \left(c_1\omega_d \left(\frac{\theta}{4}\right)^d\right)^k \left(\omega_d \left(\frac{\theta}{4}\right)^d\right)^{-1} \\
    &  =   & c_2 h(t/k)^{-d} e^{-c_3 k} \geq c_2 h(t)^{-d} e^{-4c_3 |x| F^{-1}(2|x|/(\eta t))/\eta},
  \end{eqnarray*}
  where $c_1=c_1(d)$ and $c_2=c_2(d,M_0),c_3=c_3(d,M_0)$.
  Let now $\frac{4|x|}{\eta}F^{-1}\left(\frac{2|x|}{\eta t}\right) < 1$. The function 
  $g(s) = \frac{4s}{\eta}F^{-1}\left(\frac{2s}{\eta t}\right)$ is strictly
  increasing, continuous and $\sup_{s\in [0,\eta t F(s_0)/2]}g(s)= g(\eta t F(s_0)/2) = 2ts_0F(s_0)> 1$ (since $t>\frac{1}{s_0F(s_0/2)}$) 
  so there exists $s_1 > |x|$ such that $\frac{4s_1}{\eta}F^{-1}\left(\frac{2s_1}{\eta t}\right)=1$. Furthermore, 
  using the fact that $t>\frac{1}{s_0F(s_0/2)}$
  we obtain
  $$
    \frac{4s_1}{\eta}F^{-1}\left(\frac{2s_1}{\eta t}\right)
    =   1
    =   \frac{2}{s_0}F^{-1}\left(\frac{s_0 F(s_0/2)}{s_0 }\right)
    \geq \frac{2}{s_0}F^{-1}\left(\frac{1}{s_0 t}\right)
    =    \frac{4\frac{\eta}{2s_0}}{\eta}F^{-1}\left(\frac{2\frac{\eta}{2s_0}}{\eta t}\right)      
  $$ which yields $\frac{\eta}{2s_1} <s_0$.  We have then
  $\frac{2s_1}{\eta t}=F\left(\frac{\eta}{4s_1}\right)<F\left(\frac{\eta}{2s_1}\right)\leq \Psi\left(\frac{\eta}{2s_1}\right)\frac{2s_1}{\eta}$, hence
  $s_1\leq \frac{1}{2}\eta h(t)$ and \eqref{eq:p_est_below2} in this case follows directly from (\ref{eq:p_est_below1}), since $|x| < s_1$.
\end{proof}

\begin{proof}[Proof of Theorem \ref{main1}]
It follows from \eqref{eq:PsiH} that for every $s_0>0$ we have
\begin{displaymath}
	\Psi(s) \geq s^2 \int_{|y|<1/s} |y|^2 \,\nu(dy) \geq c_1 s^2,
\end{displaymath}
for $s<s_0$, where $c_1=\int_{|y|<1/s_0} |y|^2\, \nu(dy) >0 $ since $\nu(\Rd)=\infty$. Therefore
we have $\Psi(s)\geq s F(s)$ for $s<s_0$ and $F(s) = c_1 s$ and Lemma \ref{below_exp2b} yields
\begin{displaymath}
  p_t(x)	\geq c_2 h(t)^{-d} e^{-c_3|x|^2/ t},
\end{displaymath}
for $t\in (\frac{2}{c_1 s_0^2},\infty)$ and $|x|<c_4 s_0 t$.
\end{proof}

In \cite[Lemma 2]{S11} we obtained an upper estimate of densities for infinitely divisible distributions having L\'evy measures with bounded support. Here we prove the opposite bound which holds also for more general class of processes.

\begin{lemat}\label{pbelow_expxlog}
  Assume that \eqref{Assum} holds and there exist $r_0>0$ and $\kappa_0>0$ such that
\begin{equation}\label{eq:>xlog.a1}
  \inf_{x\in B(0,r_0)} \frac{d\nu_c}{dm}(x) = \kappa_0 > 0,
\end{equation}
where $\frac{d\nu_c}{dm}$ denotes the Radon-Nikodym derivative of the absolutely continuous part $\nu_c$ of $\nu$
with respect to the Lebesgue measure.
Then for every $\eta >0$ there exists $c_1=c_1(d),c_2=c_2(d,\eta,M_0)$ such that
\begin{displaymath}
  p_t(x) \geq c_1 r_0^{-d} \exp\left\{-\frac{2|x|}{r_0}\log\left(\frac{c_2 |x|}{r_0^{d+1}\kappa_0 t}\right)\right\},
\end{displaymath}
for $t\in(0,t_p)$, and $|x|\geq\max\left\{ r_0, \Psi\left(\frac{6 \eta}{r_0}\right)\frac{3r_0}{4}\, t \right\}$.
\end{lemat}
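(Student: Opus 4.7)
The approach is a chaining argument in the spirit of the proof of Lemma~\ref{below_exp2b}, with two key changes: the intermediate ``building blocks'' are balls of radius proportional to $r_0$ (rather than $h(t/n)$), and the per-step lower bound on $p_{t/n}$ comes from Proposition~\ref{th:p_est_below} combined with \eqref{eq:>xlog.a1}, rather than from the near-origin estimate \eqref{eq:p_est_below1}.

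Set $n := \lfloor 2|x|/r_0 \rfloor$, so that $n \geq 2$ (from $|x| \geq r_0$) and $|x|/n \in [r_0/2,\, 3r_0/4)$. A short case analysis shows $n \geq 4|x|/(3r_0)$ whenever $|x| \geq r_0$, so the hypothesis $|x| \geq \Psi(6\eta/r_0)(3r_0/4)t$ forces $n/t \geq \Psi(6\eta/r_0)$ and hence $h(t/n) \leq r_0/(6\eta)$. Put $x_i = (i/n)x$ and $B_i = B(x_i, r_0/24)$. By Chapman--Kolmogorov,
\begin{displaymath}
  p_t(x) \geq \int_{B_1}\!\cdots\!\int_{B_{n-1}} \prod_{i=1}^{n} p_{t/n}(y_i - y_{i-1})\, dy_1 \cdots dy_{n-1},
\end{displaymath}
with $y_0 = 0$ and $y_n = x$. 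Every increment $z = y_i - y_{i-1}$ satisfies $|z| \in [5r_0/12,\, 5r_0/6]$; combined with $\eta h(t/n) \leq r_0/6 \leq |z|$ and $|z| + L_2 h(t/n) < 5r_0/6 + r_0/6 = r_0$ (the latter using $L_2 < \eta$ from Proposition~\ref{th:p_est_below}), this allows Proposition~\ref{th:p_est_below} to be applied to each $p_{t/n}(z)$; the ball $B(z, L_2 h(t/n))$ it involves lies inside $B(0, r_0)$ and is estimated through \eqref{eq:>xlog.a1}, giving
\begin{displaymath}
  p_{t/n}(z) \geq L_1 \frac{t}{n} h(t/n)^{-d} \cdot \kappa_0 \omega_d (L_2 h(t/n))^d = L_1 L_2^d \omega_d \kappa_0 \frac{t}{n}.
\end{displaymath}
The crucial feature is that the $h(t/n)$-factors cancel, so the per-step bound depends only on $\kappa_0$, $t/n$, and dimensional constants.

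Multiplying the $n$ per-step estimates with the $n-1$ ball volumes $\omega_d (r_0/24)^d$ yields $p_t(x) \geq c_1 r_0^{-d}(c_0 \kappa_0 r_0^d t/n)^n$ with $c_1 = 24^d/\omega_d$ and $c_0 = L_1 L_2^d \omega_d^2/24^d$. Writing this as $c_1 r_0^{-d} \exp\{-n \log(n/(c_0 \kappa_0 r_0^d t))\}$ and invoking $n \leq 2|x|/r_0$ converts it into the announced bound with $c_2 = 2/c_0$. The main obstacle is the simultaneous balancing of scales at the per-step level -- which is precisely what dictates the condition $|x| \geq \Psi(6\eta/r_0)(3r_0/4)t$, through its consequence $h(t/n) \leq r_0/(6\eta)$ -- together with the passage from the integer $n$ to the target exponent $2|x|/r_0$: one may need to enlarge $c_2$ to ensure that $c_2 |x|/(r_0^{d+1}\kappa_0 t) \geq 1$, which is legitimate because $r_0^d \kappa_0/\Psi(6\eta/r_0) \leq c(d,\eta)$, as follows from \eqref{eq:PsiH} combined with \eqref{eq:>xlog.a1}.
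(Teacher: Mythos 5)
Your proof is correct and takes essentially the same approach as the paper's: a Chapman--Kolmogorov chaining argument along $x_i=(i/n)x$ with $n\asymp|x|/r_0$, per-step bounds from Proposition~\ref{th:p_est_below} combined with the uniform density lower bound \eqref{eq:>xlog.a1}, and the scale constraint $h(t/n)\lesssim r_0$ coming from the hypothesis on $|x|/t$. The only differences are cosmetic (your $n=\lfloor 2|x|/r_0\rfloor$ versus the paper's $\lfloor 4|x|/(3r_0)\rfloor+1$, balls of radius $r_0/24$ versus $r_0/8$), and in fact your choice of radius guarantees $B(z,L_2 h(t/n))\subset B(0,r_0)$ outright and you spell out the nonnegativity of the logarithm when passing from $n$ to $2|x|/r_0$, two small points the paper leaves implicit.
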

\begin{proof}
  Let 
  $$
    n=\left\lfloor \frac{4|x|}{3r_0} \right\rfloor + 1.
  $$
  %Since $|x|/r_0\geq 1$ we have $n\geq 2$. For $|x|/r_0 \geq 3/2$ we have $n\leq 2|x|/r_0 <7|x|/(3r_0)$, and $n=2$ for $|x|/r_0\in [1,3/2)$.
  We note that $ 4|x|/(3r_0) < n \leq 7|x|/(3r_0) $, since $|x|/r_0 \geq 1$.
  Furthermore, $\frac{n}{t} \geq \Psi\left(\frac{6\eta}{r_0}\right)\frac{3n r_0}{4|x|}> \Psi\left(\frac{6\eta}{r_0}\right)$. It follows that
  $ \Psi^{-1}(n/t)\geq \frac{6\eta}{r_0}$ and $\frac{r_0}{6}\geq \eta h(\frac{t}{n})$.
  From \eqref{eq:>xlog.a1} and Proposition \ref{th:p_est_below} for every $s\in (0,t_p)$ such that $r_0>\eta h(s)$ we get
  $$
    p_s(y) \geq c_1 s \kappa_0,\quad \eta h(s) \leq |y| < r_0,
  $$
  for some constant $c_1=c_1(d,\eta,M_0)$.
  Let $0=x_0,x_1,...,x_{n-1},x_{n}=x$ be such that $|x_{i+1}-x_i|=\frac{|x|}{n}$ (taking $x_i=(i/n)x$)
  and let $B_i=B(x_i,\frac{r_0}{8})$. We note that $ \frac{3}{7}r_0 \leq \frac{|x|}{n} < \frac{3}{4}r_0$ and for $y_i\in B_i$, $y_{i+1}\in B_{i+1}$ we have $  \eta h(\frac{t}{n}) \leq \frac{r_0}{6} < |y_{i+1}-y_i| < r_0 $.  We obtain
  \begin{eqnarray*}
    p_t(x) 
    &  =   & \int ... \int p_{t/n}(y_1) p_{t/n}(y_2-y_1)...p_{t/n}(x-y_{n-1})\, dy_1dy_2...dy_{n-1} \\
    & \geq & \int_{B_1} ... \int_{B_{n-1}} p_{t/n}(y_1) p_{t/n}(y_2-y_1)...p_{t/n}(x-y_{n-1})\, dy_1dy_2...dy_{n-1} \\
    & \geq & \left(c_1 \frac{t}{n}\kappa_0 \right)^{n} \left(\omega_d \left(\frac{r_0}{8}\right)^d\right)^{n-1} 
              =  \left(c_1 \frac{t}{n}\kappa_0 \omega_d \left(\frac{r_0}{8}\right)^d \right)^{n} 
                 \left(\omega_d \left(\frac{r_0}{8}\right)^d\right)^{-1}\\
    &   =  &  c_2 r_0^{-d} \left(c_3 \frac{t}{n}\kappa_0 r_0^d \right)^{n} = 
              c_2 r_0^{-d} \exp\left\{ -n \log\left( \frac{n}{c_3 t \kappa_0 r_0^d}\right)\right\}\\
    & \geq   & c_2 r_0^{-d} \exp\left\{ -\frac{2|x|}{r_0} \log\left( \frac{2|x|}{c_3 t \kappa_0 r_0^{d+1}}\right)\right\},
  \end{eqnarray*}
  with $c_2=c_2(d), c_3=c_3(d,\eta,M_0)$, and in the last line we use that $n\leq 2|x|/r_0$,
  since $\lfloor \frac{4}{3} u \rfloor+1 \leq \frac{4}{3} u + \frac{2}{3} u = 2u$ for $u\geq \frac{3}{2}$,
  and $\lfloor \frac{4}{3} u \rfloor+1 = 2 \leq 2u$ if $u\in [1,\frac{3}{2})$.
\end{proof}

\begin{proof}[Proof of Theorem \ref{main2}.] It follows directly from Lemma \ref{pbelow_expxlog}.
\end{proof}

%-------------------------------------------------------------------------------------
\section{Application to L\'evy measures with bounded support}\label{app_truncated}

\subsection{General case}

Using above lemmas we obtain the following lower estimate of densities for semigroups with truncated L\'evy measures. 

\begin{twierdzenie}\label{tw:zdolu_truncated}
  Assume that there exists constant $r_0 >0$ such that $\supp \nu \subset B(0,r_0)$ and
\begin{equation}\label{eq:ikrd}
  \kappa_0 = \inf_{0<|x|<r_0} \frac{d\nu_c}{dm}(x) > 0,
\end{equation}
  If \eqref{Assum} holds with $t_p=\infty$ then there exist constants 
  $c_i=c_i(d,M_0)$, $i=1,2,3,4$, such that
  \begin{displaymath}
    p_t(x) \geq c_1 \left\{
    \begin{array}{lll}
      h(t)^{-d} & \mbox{for} & |x| \leq \eta_0 h(t),\,t>0,\\
      t (h(t))^{-d} \nu(B(x,c_2 h(t))) & \mbox{for} & \eta_0 h(t)\leq |x| \leq r_0 ,\,t\leq t_0, \\
      h(t)^{-d} \exp\left\{-\frac{ c_3 |x|^2}{ m_0 t}\right\}, & \mbox{for} & \eta_0 h(t)\leq |x|\leq C_* t,\, t\geq t_0, \\
      r_0^{-d}\exp\left\{-\frac{2 |x|}{r_0} \log\left(\frac{c_4 |x|}{r_0^{d+1}\kappa_0 t}\right)\right\}, & \mbox{for} & |x| \geq r_0\vee C_*t,\, t>0,
    \end{array}
    \right.
  \end{displaymath}
where $m_0=\int |y|^2\, \nu(dy)$, $\eta_0:=\theta \wedge \frac{L_0}{216}\wedge 1$, $t_0=\frac{4r_0^2}{\eta_0 L_0 m_0}$ and
  $C_*=\frac{\eta_0 L_0 m_0}{4r_0}$. We also have
\begin{equation}\label{eq:Lh2}
	\sqrt{L_0 m_0} \sqrt{t} \leq h(t) \leq \sqrt{2 m_0} \sqrt{t}\quad \text{ for}\quad t > \frac{r_0^2}{L_0 m_0}.
\end{equation}
\end{twierdzenie}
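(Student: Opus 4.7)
The plan is to handle the four regimes separately, each by invoking a lemma from Section \ref{section_lower} or Proposition \ref{th:p_est_below}. The unifying observation is the bounded-support hypothesis $\supp\nu\subset B(0,r_0)$: for $r\leq 1/r_0$ we have $r|y|\leq 1$ throughout $\supp\nu$, so that $H(r)=r^2\int|y|^2\,\nu(dy)=r^2 m_0$, while $H(r)\leq r^2 m_0$ trivially for all $r>0$. Combined with \eqref{eq:PsiH} this yields
\[
L_0 m_0 r^2\leq \Psi(r)\leq 2m_0 r^2,\quad r\leq 1/r_0,\qquad \Psi(r)\leq 2m_0 r^2,\quad r>0.
\]
The auxiliary estimate \eqref{eq:Lh2} is now immediate: $\Psi(r)\leq 2m_0 r^2$ gives $\Psi^{-1}(1/t)\geq 1/\sqrt{2m_0 t}$, hence $h(t)\leq\sqrt{2m_0 t}$ for every $t>0$; and $\Psi(1/r_0)\geq L_0 m_0/r_0^2$ forces $\Psi^{-1}(1/t)\leq 1/r_0$, i.e.\ $h(t)\geq r_0$, as soon as $t>r_0^2/(L_0 m_0)$, in which range the lower bound on $\Psi$ applies at $r=1/h(t)$ and yields $h(t)\geq\sqrt{L_0 m_0 t}$.

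Part (i) is immediate from \eqref{eq:p_est_below1} since $\eta_0\leq\theta$, and part (ii) is immediate from Proposition \ref{th:p_est_below} applied with $\eta=\eta_0$ (the restriction $|x|\leq r_0$, $t\leq t_0$ only prescribes the regime, not the argument). For part (iii) I would invoke Lemma \ref{below_exp2b} with the choice $F(s)=L_0 m_0 s$ and $s_0=1/r_0$, which is admissible by the displayed lower bound $\Psi(s)/s\geq L_0 m_0 s=F(s)$ on $(0,1/r_0]$. Since $F^{-1}(u)=u/(L_0 m_0)$, the exponential in the lemma's conclusion becomes $\exp\{-2c|x|^2/(\eta_0^2 L_0 m_0 t)\}=\exp\{-c_3|x|^2/(m_0 t)\}$. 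The admissible range translates into $t>2r_0^2/(L_0 m_0)$ and $|x|<\eta_0 L_0 m_0 t/(4r_0)=C_* t$; since $\eta_0\leq 1$ one has $t_0=4r_0^2/(\eta_0 L_0 m_0)\geq 2r_0^2/(L_0 m_0)$, so the prescribed range $t\geq t_0$, $|x|\leq C_* t$ indeed lies inside the lemma's hypothesis.

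Part (iv) follows from Lemma \ref{pbelow_expxlog} with a fixed $\eta$ to be matched to $C_*$. That lemma's admissible region is $|x|\geq\max\{r_0,\Psi(6\eta/r_0)(3r_0/4)t\}$, so to dovetail it with the complement of the part (iii) region one needs $\Psi(6\eta/r_0)\cdot 3r_0/4\leq C_*$. Using the universal upper bound $\Psi(6\eta/r_0)\leq 72\eta^2 m_0/r_0^2$ (valid since $\Psi(r)\leq 2m_0 r^2$ for all $r>0$), the left-hand side is at most $54\eta^2 m_0/r_0$; choosing $\eta=\eta_0$ reduces the required inequality to $\eta_0\leq L_0/216$, which is built into the definition of $\eta_0$.

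The main technical obstacle is precisely this book-keeping of constants: $\eta_0$ must lie in $(0,\theta)$ (so that \eqref{eq:p_est_below1}, Proposition \ref{th:p_est_below} and Lemma \ref{below_exp2b} apply with $\eta=\eta_0$) and must simultaneously be small enough that Lemma \ref{pbelow_expxlog} covers the complementary region $\{|x|\geq C_* t\}$ with the \emph{same} threshold $C_*$ used in part (iii). The prescribed choices $\eta_0=\theta\wedge L_0/216\wedge 1$, $t_0=4r_0^2/(\eta_0 L_0 m_0)$ and $C_*=\eta_0 L_0 m_0/(4r_0)$ are tuned exactly so that both couplings hold at once; once they fit together the four assertions are short consequences of the lemmas already proved.
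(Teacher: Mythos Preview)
Your proof is correct and follows essentially the same route as the paper's: the four regimes are handled, in order, by \eqref{eq:p_est_below1}, Proposition \ref{th:p_est_below} with $\eta=\eta_0$, Lemma \ref{below_exp2b} with $F(s)=L_0 m_0 s$ and $s_0=1/r_0$, and Lemma \ref{pbelow_expxlog} with $\eta=\eta_0$, and the auxiliary bound \eqref{eq:Lh2} is obtained from $H(r)=m_0 r^2$ on $(0,1/r_0]$ together with \eqref{eq:PsiH}. Your explicit verification that $\eta_0\leq L_0/216$ is exactly what makes the threshold $C_*$ from part (iii) dominate the admissibility constraint of Lemma \ref{pbelow_expxlog} is slightly more spelled out than in the paper (which records the same inequality $C_*t\geq 54\eta_0^2 m_0 t/r_0$ without comment), but the argument is identical.
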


\begin{proof}
The first estimate follows from \eqref{eq:p_est_below1} and the second from Proposition \ref{th:p_est_below}.

Using \eqref{eq:PsiH} we obtain
\begin{equation}\label{eq:LP2}
	L_0 m_0 r^2 \leq \Psi(r) \leq 2 m_0 r^2,\quad r\leq \frac{1}{r_0},
\end{equation}
since $H(r)=r^2 \int |y|^2\,\nu(dy) = m_0 r^2,$ for $r\leq 1/r_0$.
For $t > r_0^2/(L_0 m_0) \geq  1/\Psi(1/r_0)$ we have $h(t)\geq r_0$
and \eqref{eq:Lh2} follows by taking $r=1/h(t)$ in \eqref{eq:LP2}.

Choosing $F(s)=L_0 m_0 s$ and $s_0=1/r_0$ in Lemma \ref{below_exp2b} we obtain
\begin{displaymath}
  p_t(x) \geq c_1 h(t)^{-d} \exp\left\{-\frac{2 c_2 |x|^2}{\eta_0^2 L_0 m_0 t}\right\},  
\end{displaymath}
for $t>t_0\geq \frac{2r_0^2}{ L_0 m_0}$ and $|x| < \frac{\eta_0 L_0 m_0  }{4 r_0} t$.
From \eqref{eq:ikrd} and Lemma \ref{pbelow_expxlog} we get
$$
  p_t(x) \geq c_3 r_0^{-d} \exp\left\{-\frac{2|x|}{r_0}\log\left(\frac{c_4 |x|}{r_0^{d+1} \kappa_0 t}\right)\right\},
$$
for $|x|\geq\max\left\{ r_0, C_* t\right\}\geq \max\left\{ r_0, \frac{54 \eta_0^2 m_0}{r_0}\, t \right\}$. 
\end{proof}

Now we deal with upper bounds. 
In the following lemma we improve the estimates obtained previously in \cite[Lemma 2]{S11}. We use here in essential way the results of 
\cite{KSch2012}.

\begin{lemat}\label{th:p_est_above}
  Assume that the L\'evy measure $\nu$ is symmetric and $P_t$ has a transition density $p_t$ for all $t>0$.
  If, for some $r_0>0$, we have $\supp\nu\subset B(0,r_0)$ then  
  \begin{equation}\label{kwadratbelow}
    p_t(x)\leq e^{\frac{-|x|}{4r_0}\log\left(\frac{r_0|x|}{2tm_0}\right)}p_t(0),\quad |x|\geq\frac{2em_0}{r_0}\, t,
  \end{equation}
  where $m_0=\int|y|^2\,\nu(dy)$. If additionally there exist constants $M_5,M_6>0$ such that
  \begin{equation}\label{eq:1}
    \int |y|^2 e^{|\xi||y|}\, \nu(dy) \leq M_5,\quad |\xi|\leq M_6,
  \end{equation}
  then
  $$
    p_t(x)\leq e^{-\frac{|x|^2}{4tM_5}}p_t(0),\quad |x|\leq 2M_5 M_6 t.
  $$  
\end{lemat}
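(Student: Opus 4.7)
My plan is to obtain the pointwise upper bound via Fourier inversion combined with a contour shift into the complex plane, exploiting that $\supp\nu\subset B(0,r_0)$ forces the characteristic exponent
\[
\Phi(\zeta) = \int_{\Rd}\bigl(1-\cos(\zeta\cdot y)\bigr)\,\nu(dy), \qquad \zeta\in\mathbb{C}^d,
\]
to be entire. For a parameter $\lambda>0$ I would take $\eta := -\lambda x/|x|$ and shift the Fourier inversion contour by $i\eta$, obtaining
\[
p_t(x) = \frac{e^{-\lambda|x|}}{(2\pi)^d}\int_{\Rd} e^{-i\xi\cdot x - t\Phi(\xi+i\eta)}\,d\xi.
\]
The shift is legitimate by Cauchy's theorem together with rapid decay of $e^{-t\Phi(\cdot)}$ on every horizontal strip, which I would justify by the pointwise comparison derived in the next step.

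Next, using the identity $\cos((\xi+i\eta)\cdot y) = \cos(\xi\cdot y)\cosh(\eta\cdot y) - i\sin(\xi\cdot y)\sinh(\eta\cdot y)$, the symmetry of $\nu$, and $|\cos|\leq 1$, I would show
\[
\Re\Phi(\xi+i\eta) = \int_{\Rd}\bigl[1-\cos(\xi\cdot y)\cosh(\eta\cdot y)\bigr]\,\nu(dy) \geq \Phi(\xi) - K(\lambda),
\]
with $K(\lambda):=\int_{\Rd}(\cosh(\lambda|y|)-1)\,\nu(dy)$. Taking moduli in the shifted Fourier integral and using $\int_{\Rd}e^{-t\Phi(\xi)}\,d\xi = (2\pi)^d p_t(0)$, this would yield the master inequality
\[
p_t(x) \leq e^{-\lambda|x| + tK(\lambda)}\,p_t(0), \qquad \lambda>0.
\]

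It would then remain to bound $K$ and optimize in $\lambda$. Via $\cosh u - 1 \leq u\sinh u \leq u^2\cosh u \leq u^2 e^u$ I would get $K(\lambda) \leq \lambda^2\int_{\Rd}|y|^2 e^{\lambda|y|}\,\nu(dy)$. For the second claim, assumption \eqref{eq:1} bounds the latter by $M_5$ whenever $\lambda \leq M_6$; the minimizer $\lambda^*=|x|/(2tM_5)$ of $-\lambda|x|+t\lambda^2 M_5$ is admissible precisely when $|x|\leq 2tM_5 M_6$, producing the Gaussian factor $\exp(-|x|^2/(4tM_5))$. For the first claim I would only have the bound $K(\lambda)\leq m_0\lambda^2 e^{\lambda r_0}$ from compact support; here I would pick $\lambda$ of order $r_0^{-1}\log(r_0|x|/(tm_0))$ and use the threshold $|x|\geq 2em_0 t/r_0$ to absorb the polynomial-in-$\log$ overshoot coming from the $\lambda^2 e^{\lambda r_0}$ factor into the leading linear-in-$\log$ term, producing the stated exponent $-(|x|/(4r_0))\log(r_0|x|/(2tm_0))$. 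I expect the delicate point to be precisely this last optimization: the crude pointwise bound on $K(\lambda)$ loses a polynomial factor that must be reabsorbed using the hypothesis on $|x|$, and careful bookkeeping is what accounts for the factor $1/4$ rather than $1$ in the final exponent.
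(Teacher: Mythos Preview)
Your approach is essentially the same as the paper's. The paper quotes as a black box the inequality $p_t(x)\le e^{-D_t^2(x)}p_t(0)$ with $D_t^2(x)=\sup_{\xi}\bigl(\xi\cdot x - t\int(\cosh(\xi\cdot y)-1)\,\nu(dy)\bigr)$ from Knopova--Schilling \cite{KSch2012}, whereas you re-derive (a radial version of) this master inequality directly via the Fourier contour shift; the subsequent bound $\cosh u -1\le u^2e^{u}$ and the optimization in the exponential parameter are then identical. One small point worth sharpening in your write-up: for the first claim the paper's clean device is to choose $s$ implicitly by $se^{sr_0}=|x|/(2tm_0)$, so that $-s|x|+tm_0s^2e^{sr_0}=-\tfrac12 s|x|$ exactly, and then to use $e^u\le ue^u\le e^{2u}$ for $u\ge1$ (valid under $|x|\ge 2em_0t/r_0$) to get $s\ge \tfrac{1}{2r_0}\log(r_0|x|/(2tm_0))$; this replaces the ``absorb the overshoot'' step you flagged as delicate.
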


\begin{proof}
  We use Theorem 6 of \cite{KSch2012} obtaining
  $$
    p_t(x) \leq e^{-D^2_t(x)}p_t(0),\quad x\in\Rd,t>0,
  $$
  where
  $$
    D^2_t(x) = -v_t(\xi_0,x),
  $$
  $$
    v_t(\xi,x) = -\scalp{\xi}{x} + t \int \left(\cosh(\scalp{\xi}{y})-1\right)\,\nu(dy),
  $$
  and $\xi_0=\xi_0(t,x)\in\Rd$ is such that $v_t(\xi_0,x)=\inf_{\xi\in\Rd} v_t(\xi,x)$.
  We have $\cosh(s)-1\leq s^2 e^s$ for all $s>0$, therefore
  $$
    v_t(\xi,x) \leq -\scalp{\xi}{x} + t |\xi|^2 \int |y|^2 e^{|\xi||y|}\,\nu(dy) \leq -\scalp{\xi}{x} + t |\xi|^2 e^{|\xi|r_0} m_0.
  $$
  We choose $s>0$ such that $se^{s r_0}=\frac{|x|}{2tm_0}$. If $\frac{|x|}{2tm_0}\geq \frac{e}{r_0}$ then
  $\frac{1}{2 r_0}\log(\frac{r_0|x|}{2tm_0}) \leq s\leq \frac{1}{r_0}\log(\frac{r_0|x|}{2tm_0})$, 
  since $e^u\leq ue^u \leq e^{2u}$ for $u\geq 1$. Taking $\xi_1=\frac{sx}{|x|}$ 
  we obtain
  \begin{displaymath}
    v_t(\xi_0,x)\leq v_t(\xi_1,x)\leq -\frac{1}{2}s|x| \leq \frac{-|x|}{4 r_0}\log\left(\frac{r_0|x|}{2tm_0}\right),
  \end{displaymath}
  and \eqref{kwadratbelow} follows. 
  If (\ref{eq:1}) is satisfied then
  \begin{displaymath}
    v_t(\xi,x) \leq -\scalp{\xi}{x} + t |\xi|^2 \int |y|^2 e^{|\xi||y|}\,\nu(dy) \leq -\scalp{\xi}{x} + t |\xi|^2 M_5,
  \end{displaymath}
  for $|\xi|\leq M_6$. Taking $\xi_2=\frac{1}{2tM_5}x$ we obtain 
  \begin{displaymath}
    v_t(\xi_0,x)\leq v_t(\xi_2,x)\leq \frac{-|x|^2}{4tM_5},
  \end{displaymath}
  for $|x|\leq 2M_5M_6\, t$.
\end{proof}

We summarize estimates obtained in Lemma \ref{th:p_est_above}, Proposition \ref{prop1} and \eqref{eq:p_est_below1} in
the following Theorem. We recall that the first estimate holds in fact for every process satisfying \eqref{Assum}.
We note also that for $t>1$  we have $h(t)\asymp \sqrt{t}$ and the exponential term in the second inequality below dominates the forth bound so for $|x| > C^* t$ the latter estimate is more exact. Similarly the third estimate is more exact then the second for $|x|<C^* t$ and $t>1$. For small $t$
the result of analogous comparison depends on functions $h$ and $f$.
We will compare the bounds more precisely in the next section under additional assumptions on $\nu$.

\begin{twierdzenie}\label{tw:zgory_truncated}
  Assume that \eqref{Assum} holds with $t_p=\infty$, $\supp(\nu) \subset B(0,r_0)$ for some $r_0>0$ and there exist a constant $\gamma$ and a nonincreasing function $f$ such that
  \eqref{eq:nu_estim} and \eqref{eq:tech_assumpt} hold. 
  Then there exist constants $\theta=\theta(d,M_0),c_1=c_1(d,M_0,M_3,M_4),$
  $c_2=c_2(d,M_0), c_3=c_3(d,M_0)$ such that
  \begin{displaymath}
    p_t(x) \leq c_1 \left\{
    \begin{array}{lll}
      h(t)^{-d} & \mbox{for} & |x|\leq \theta h(t),\\
      t\left[h(t)\right]^{\gamma-d}
           f\left(|x|/4\right)
          + \, h(t)^{-d }e^{-c_2 \frac{|x|}{h(t)}\log\left(1+\frac{c_3|x|}{h(t)}\right)} & \mbox{for} & |x| \geq \theta h(t) \\
      h(t)^{-d} \exp\left\{-\frac{ |x|^2}{ 4em_0\, t}\right\}, & \mbox{for} & |x|\leq C^* t,\\
      h(t)^{-d} \exp\left\{-\frac{|x|}{4r_0} \log\left(\frac{r_0 |x|}{2m_0\, t}\right)\right\}, & \mbox{for} & |x| \geq C^*t,
    \end{array}
    \right.
  \end{displaymath}
  where $C^* = \frac{2em_0}{r_0},$ and $m_0=\int|y|^2\,\nu(dy)$.
\end{twierdzenie}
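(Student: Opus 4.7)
The plan is to assemble the four bounds by stitching together three ingredients that have already been proved: the diagonal upper bound in \eqref{eq:p_est_below1}, the off-diagonal decay in Proposition \ref{prop1}, and the Carlen--Kusuoka--Stroock type inequalities in Lemma \ref{th:p_est_above}. No new analytic input is needed; the work is to check that the hypotheses of Lemma \ref{th:p_est_above} hold for Lévy measures with bounded support, and then to book-keep constants so the four regimes line up.

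First I would invoke \eqref{eq:p_est_below1}: under \eqref{Assum} with $t_p = \infty$ one has $p_t(x) \leq c_2 (h(t))^{-d}$ whenever $|x| \leq \theta h(t)$, and in particular $p_t(0) \leq c_2 (h(t))^{-d}$ for every $t > 0$. This is exactly the first of the four bounds, and it also provides the universal diagonal factor $p_t(0) \lesssim h(t)^{-d}$ that will be needed to convert the relative bounds from Lemma \ref{th:p_est_above} into the absolute bounds stated in cases (3) and (4). Next, for $|x| \geq \theta h(t)$, the second bound is obtained by applying Proposition \ref{prop1}, whose hypotheses \eqref{eq:nu_estim}--\eqref{eq:tech_assumpt} are assumed; here I simply drop the outer $\min\{1,\,\cdot\,\}$ in the conclusion of Proposition \ref{prop1} and relabel the constants.

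For the third and fourth bounds, I must verify condition \eqref{eq:1} from Lemma \ref{th:p_est_above} for any $\nu$ supported in $B(0, r_0)$. Since $|y| \leq r_0$ on $\supp \nu$, for every $|\xi| \leq 1/r_0$ we have
\begin{displaymath}
  \int |y|^2 e^{|\xi||y|}\,\nu(dy) \leq e^{|\xi| r_0} \int |y|^2\,\nu(dy) \leq e\, m_0,
\end{displaymath}
so \eqref{eq:1} is satisfied with $M_5 = e m_0$ and $M_6 = 1/r_0$, and the threshold $2 M_5 M_6 t$ in Lemma \ref{th:p_est_above} becomes exactly $C^* t$ with $C^* = 2 e m_0 / r_0$. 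Applying the two conclusions of Lemma \ref{th:p_est_above} therefore yields
\begin{displaymath}
  p_t(x) \leq e^{-|x|^2 / (4 e m_0 t)} p_t(0)\quad \text{for } |x| \leq C^* t,
\end{displaymath}
and
\begin{displaymath}
  p_t(x) \leq e^{- \frac{|x|}{4 r_0} \log\left(\frac{r_0 |x|}{2 m_0 t}\right)} p_t(0)\quad \text{for } |x| \geq C^* t,
\end{displaymath}
and substituting the diagonal bound $p_t(0) \leq c_2 h(t)^{-d}$ from the first step delivers the third and fourth estimates, after absorbing the scalar $c_2$ into the prefactor $c_1$.

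The only place where one must be slightly careful is the calibration of $C^*$: the exponential estimate in Lemma \ref{th:p_est_above} holds for $|x| \leq 2 M_5 M_6 t$, while the logarithmic one requires $|x| \geq (2 e m_0 / r_0) t$, and I need the two ranges to meet with no gap. This is precisely why the constants $M_5$, $M_6$ coming from the bounded-support estimate are chosen so that $2 M_5 M_6 = 2 e m_0 / r_0$, which is exactly the threshold in Lemma \ref{th:p_est_above}. There is no genuine obstacle here; the proof is a clean synthesis of results already in hand.
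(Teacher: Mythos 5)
Your proof is correct and follows essentially the same route as the paper: the first two bounds from \eqref{eq:p_est_below1} and Proposition \ref{prop1}, and the last two from Lemma \ref{th:p_est_above} with $M_5=em_0$, $M_6=1/r_0$ combined with $p_t(0)\lesssim h(t)^{-d}$. The only difference is cosmetic — you spell out the verification of \eqref{eq:1} for the bounded-support case, which the paper leaves implicit.
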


\begin{proof}
The first inequality follows from \eqref{eq:p_est_below1} and the second from Proposition \ref{prop1}.
It follows from Lemma \ref{th:p_est_above} and \eqref{eq:p_est_below1} that
\begin{displaymath}
  p_t(x)\leq c_1 h(t)^{-d} e^{\frac{-|x|}{4r_0}\log(\frac{r_0|x|}{2tm_0})},\quad |x|\geq\frac{2em_0}{r_0}\, t.
\end{displaymath}
Taking $M_6=\frac{1}{r_0}$ and $M_5=em_0$ in \eqref{eq:1} we get
\begin{displaymath}
  p_t(x)\leq c_1 h(t)^{-d} e^{-\frac{|x|^2}{4tem_0}},\quad |x|\leq \frac{2e m_0}{r_0 } t.	
\end{displaymath}
\end{proof}

%---------------------------------------------------------------------------------------------------
\subsection{Absolutely continuous L\'evy measures}\label{section_abscont}

In this section we always assume that the L\'evy measure $\nu$ is absolutely continuous with respect to the Lebesgue measure
on $\Rd\setminus\{ 0 \}$ with a density $\bar\nu$. Moreover we assume that there exist constants $m_1,m_2,r_0>0$ and
a nonincreasing function $f: (0,r_0] \to (0,\infty) \ $ such that
\begin{equation}\label{eq:compar}
     m_1 f(|x|) \leq \bar\nu(x) \leq m_2 f(|x|),\quad \text{for} \quad |x| < r_0.
\end{equation}
Apart from Lemma \ref{l:doubling} we consider here $\nu$ such that $\supp(\nu)\subset B(0,r_0)$.

We will also assume that $f$ satisfies \eqref{Assum2}.
We note that for a nonincreasing functions \eqref{Assum2} yields the following both side doubling property
\begin{equation}\label{eq:doubledoubling}
	c_1 f(r) \leq f(2r) \leq c_2 f(r),\quad 2r \leq r_0,
\end{equation}
for some constants $c_1,c_2 > 0$. If $f$ is nonincreasing and 
\eqref{eq:doubledoubling} holds with $2^{-d-2} < c_1 \leq c_2 < 2^{-d}$ then
\eqref{Assum2} is satisfied.

The condition \eqref{Assum2} holds for many typical functions $f$ such as $f(s) = s^{-d-\alpha}$, $\alpha\in (0,2)$,
or $f(s)=s^{-d-\alpha} (\log (1+\frac{1}{s}))^{-\beta}$, $\beta\in\R$. However we note that a density $\bar\nu$ of a L\'evy measure not always satisfies the doubling property at the origin. 
For example we can observe it
for 
\begin{displaymath}
  \bar\nu(x) = \frac{2^{(2+d)k^2}}{k^2+1},\quad \mbox{ for } \quad 2^{-(k+1)^2} <  |x|\leq 2^{-k^2},
\end{displaymath}
since
\begin{displaymath}
	\frac{f(2\cdot 2^{-k^2})}{f(2^{-k^2})} = \frac{k^2+1}{(k-1)^2+1}2^{(2+d)(-2k+1)} \to 0,\quad 
	\mbox{ as } \quad k\to\infty. 
\end{displaymath}

The constants appearing in this section can
all depend on $m_1,m_2,\beta_1,\beta_2,M_1,M_2,r_0,f$ and $\nu$ and we will not mention it explicitly below.
We will use the notation $f\asymp g$ to indicate that there exist constant $c_1,c_2$ such that
$c_1 g \leq f\leq c_2 g$. 
In the following lemmas we obtain some 
interesting properties of semigroups satisfying above conditions.  

\begin{lemat}\label{l:doubling} Assume that the L\'evy measure $\nu$ 
satisfies \eqref{eq:compar} and the function $f$ satisfies \eqref{Assum2}. If $\kappa=\inf_{|x|\leq r_0} f(x)>0$ 
  then
  \begin{equation}\label{eq:nu=Psi/xd}
    \bar\nu(x) \asymp \frac{\Psi(1/|x|)}{|x|^d}, \quad |x|<r_0.
  \end{equation}
\end{lemat}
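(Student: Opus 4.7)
My plan is to route the claim through the estimate \eqref{eq:PsiH}, which gives $\Psi \asymp H$. It will therefore suffice to prove
\[
H(1/s) \asymp s^d f(s), \qquad s := |x| \in (0,r_0),
\]
because combining this with \eqref{eq:compar} yields $\Psi(1/|x|)/|x|^d \asymp f(|x|) \asymp \bar\nu(x)$. I will split the defining integral of $H$ at $|y|=s$:
\[
H(1/s) \;=\; s^{-2}\!\int_{|y|<s}\! |y|^2 \bar\nu(y)\,dy \;+\; \int_{|y|\geq s}\!\nu(dy),
\]
and estimate the two terms separately.

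For the first term I will pass to polar coordinates and substitute $r=su$, arriving at a constant multiple of $s^d f(s)\int_0^1 u^{d+1}\,\frac{f(su)}{f(s)}\,du$. Since $0 < su \leq s \leq r_0$, condition \eqref{Assum2} applied with $R=s$, $r=su$ gives $M_1 u^{-\beta_1}\leq f(su)/f(s)\leq M_2 u^{-\beta_2}$; the exponent conditions $d<\beta_1\leq\beta_2<d+2$ ensure $u^{d+1-\beta_1}$ and $u^{d+1-\beta_2}$ are integrable at $0$, so the remaining integral is pinched between two positive constants, and the first term is $\asymp s^d f(s)$.

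For the second term I will split once more at $|y|=r_0$. Over $\{s\leq |y|<r_0\}$ the same change of variables together with \eqref{eq:compar} yields at most a constant multiple of $s^d f(s)\int_1^{r_0/s} u^{d-1}\,\frac{f(su)}{f(s)}\,du$; now \eqref{Assum2} applied with $R=su$, $r=s$ gives $f(su)/f(s)\leq M_1^{-1}u^{-\beta_1}$, and $\beta_1>d$ forces $u^{d-1-\beta_1}$ to be integrable at $\infty$, so this contribution is $\leq C s^d f(s)$. For the remaining fixed tail $\int_{|y|\geq r_0}\nu(dy)$, which is finite by \eqref{Levy1}, I will use \eqref{Assum2} once more with $R=r_0$, $r=s$ to obtain $f(s) \geq M_1 (r_0/s)^{\beta_1}\kappa$, whence
\[
s^d f(s) \;\geq\; M_1 r_0^{\beta_1}\kappa\, s^{d-\beta_1} \;\geq\; M_1 r_0^d \kappa \;>\; 0, \qquad s\in(0,r_0],
\]
so this constant tail is absorbed into $C s^d f(s)$.

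Combining the three estimates gives $H(1/s)\asymp s^df(s)$, and the lemma follows. The computation is essentially bookkeeping; the only conceptual point is the tight coincidence that the exponent conditions $d<\beta_1\leq\beta_2<d+2$ are precisely what is needed to make all three relevant integrals $\int_0^1 u^{d+1-\beta_2}du$, $\int_0^1 u^{d+1-\beta_1}du$, $\int_1^\infty u^{d-1-\beta_1}du$ finite, while the hypothesis $\kappa>0$ is used solely to absorb the finite mass of $\nu$ outside $B(0,r_0)$.
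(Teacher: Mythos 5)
Your proof is correct and follows essentially the same route as the paper: apply \eqref{eq:PsiH} to reduce to $H(1/s)\asymp s^d f(s)$, split $H$ at $|y|=s$ and then at $|y|=r_0$, use \eqref{Assum2} to control the resulting radial integrals (with $\beta_2<d+2$ ensuring integrability near $0$ and $\beta_1>d$ near $\infty$), and use $\kappa>0$ to absorb the finite tail $\int_{|y|>r_0}\nu(dy)$. The only cosmetic difference is that you invoke \eqref{Assum2} for the lower bound on the first term where the paper simply uses the monotonicity of $f$, and you pass through the substitution $r=su$ rather than integrating directly; both are bookkeeping choices.
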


\begin{proof} 
From \eqref{eq:PsiH} we get
\begin{displaymath}%\label{eq:Psinadwa}
  \Psi(1/r) \asymp r^{-2} \int_{|y|<r} |y|^2 \bar\nu(y)\, dy + \int_{|y|\geq r} \bar\nu(y)\, dy.
\end{displaymath}
We observe that
\begin{displaymath}
	\int_{|y|<r} |y|^2 \bar\nu(y)\, dy  \geq m_1 f(r) \int_{|y|<r} |y|^2  \, dy 
	= \frac{m_1 d\omega_d}{d+2} r^{d+2} f(r), \quad r<r_0,
\end{displaymath}
where $\omega_d$ denotes the volume of the unit ball in $\Rd$, and the lower estimate in \eqref{eq:nu=Psi/xd} follows. 

Now we prove the upper bound. For $r\leq r_0$, using \eqref{eq:compar} and \eqref{Assum2}, we obtain
\begin{displaymath}
  \int_{|y|<r} |y|^2 \bar\nu(y)\, dy 
  \leq \int_{|y|<r} |y|^2 m_2 M_2 \left(\frac{r}{|y|}\right)^{\beta_2} f(r)\, dy 
   =    \frac{d\omega_d m_2 M_2}{2+d-\beta_2}   r^{d+2} f(r).
\end{displaymath}

Furthermore
\begin{eqnarray*}
  \int_{r \leq |y| \leq r_0} \bar\nu(y)\, dy
  & \leq & m_2 d\omega_d \int_r^{r_0} s^{d-1} f(s)\, ds  \leq \frac{m_2 d\omega_d r^{\beta_1} f(r)}{M_1} \,\int_r^{r_0} s^{d-1-\beta_1} \, ds\\
  & \leq & c_1 r^d f(r),
\end{eqnarray*}
and from \eqref{Assum2} we obtain $r^df(r)\geq M_1 r_0^{\beta_1} r^{d-\beta_1} f(r_0) \geq M_1 r_0^d \kappa$, for $r<r_0$,
and this yields
\begin{displaymath}
  \int_{|y|\geq r} \bar\nu(y)\, dy \leq c_1 r^d f(r) + \int_{r_0 < |y| } \bar\nu(y)\, dy \leq 
  \left( c_1 + \frac{\int_{r_0 < |y| } \bar\nu(y)\, dy}{M_1r_0^d\kappa} \right) \,r^d f(r),
\end{displaymath}
and the lemma follows.
\end{proof}

\begin{lemat}\label{l:Assum2Assum} Assume that $\nu$ satysfies \eqref{eq:compar} and \eqref{Assum2} 
% with constants $d<\beta_1 \leq \beta_2 < d+2,$ 
and $\supp \nu \subset B(0,r_0)$.
  If $\kappa=\inf_{|x|\leq r_0} f (x)>0$ then \eqref{Assum} holds with $t_p=\infty$.
\end{lemat}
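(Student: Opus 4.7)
The plan is to verify the two structural inputs needed to apply the standard lemma mentioned right after \eqref{Assum} (Lemma 5 of \cite{KSz2} or Lemma 5 of \cite{KSz3}): a sector-type lower bound $\Phi(\xi)\geq c_0\Psi(|\xi|)$ valid for all $\xi\in\Rd$, together with a polynomial lower scaling of $\Psi$ on $(0,\infty)$. Once both are available, the Fourier integral in \eqref{Assum} can be estimated directly.

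For the sector bound, I would start from the hypothesis $\bar\nu(x)\geq m_1 f(|x|)$ on $B(0,r_0)$ to get
\[
  \Phi(\xi)\;\geq\; m_1\!\int_{|y|<r_0}(1-\cos(\xi\cdot y))f(|y|)\,dy,
\]
which is rotation-invariant in $\xi$. Using $1-\cos s\geq s^2/3$ for $|s|\leq 1$, restricting the integration to $|y|\leq R:=\min(1/|\xi|,\,r_0)$, and invoking the symmetry identity $\int_{|y|\leq R}(\xi\cdot y)^2 f(|y|)\,dy = (|\xi|^2/d)\int_{|y|\leq R}|y|^2 f(|y|)\,dy$, one obtains $\Phi(\xi)\geq c|\xi|^2\int_{|y|\leq R}|y|^2 f(|y|)\,dy$. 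On the other hand $\Psi(|\xi|)\leq 2H(|\xi|)$ and $\bar\nu\leq m_2 f$, so the comparison reduces to dominating the tail $\int_{1/|\xi|<|y|<r_0} f(|y|)\,dy$ by $|\xi|^2\int_{|y|\leq 1/|\xi|}|y|^2 f(|y|)\,dy$. This is precisely where \eqref{Assum2} is used: $\beta_1>d$ makes the tail integrable with the right scale, while $\beta_2<d+2$ forces the quadratic integral to be at least $c f(1/|\xi|)/|\xi|^{d+2}$, giving the desired comparison.

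For the scaling of $\Psi$, Lemma~\ref{l:doubling} already identifies $\Psi(r)\asymp f(1/r)/r^d$ for $r>1/r_0$, so \eqref{Assum2} translates into
\[
  \frac{\Psi(u)}{\Psi(v)}\;\geq\; M_1\bigl(u/v\bigr)^{\beta_1-d},\qquad u\geq v\geq 1/r_0,
\]
with exponent $\beta_1-d>0$. Combined with $\Psi(r)\asymp m_0 r^2$ for $r\leq 1/r_0$ (already established inside the proof of Theorem~\ref{tw:zdolu_truncated}), this gives a uniform polynomial lower growth of $\Psi$ on $(0,\infty)$, with global exponent $\delta:=\min(2,\beta_1-d)>0$.

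Finally I would bound
\[
  \int_{\Rd} e^{-t\Phi(\xi)}|\xi|\,d\xi \;\leq\; d\omega_d\!\int_0^\infty r^d e^{-c_0 t\Psi(r)}\,dr
\]
by splitting at $r=1/h(t)=\Psi^{-1}(1/t)$. The low part is trivially $\leq h(t)^{-d-1}/(d+1)$, and on the high part the scaling gives $t\Psi(r)\geq c(rh(t))^{\delta}$, so after the substitution $u=rh(t)$ the tail becomes $h(t)^{-d-1}\int_1^\infty u^d e^{-cu^\delta}du<\infty$. This yields \eqref{Assum} with $t_p=\infty$. The principal obstacle is the sector condition, where one must balance a quadratic integral on $|y|\leq 1/|\xi|$ against the cut-off tail and use both exponents $\beta_1,\beta_2$ in \eqref{Assum2} simultaneously; the rest is a routine splitting-and-scaling argument.
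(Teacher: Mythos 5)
Your proof is correct and follows essentially the same strategy as the paper: establish the sector condition $\Phi(\xi)\asymp\Psi(|\xi|)$, extract polynomial lower growth of $\Psi$ (the paper phrases it as $\Psi(Lr)\geq c_*\Psi(r)$ for some $L,c_*>1$, which is your $\delta$-scaling in disguise), and then bound $\int e^{-t\Phi}|\xi|\,d\xi$ by a split at $|\xi|=1/h(t)$. The only cosmetic difference is in the sector bound: the paper routes through Lemma~\ref{l:doubling} (replacing $f(|y|)$ by $\Psi(1/|y|)/|y|^d$ and using monotonicity of $\Psi$), whereas you compare the tail integral $\int_{1/|\xi|<|y|<r_0}f$ with the quadratic integral $|\xi|^2\int_{|y|\leq 1/|\xi|}|y|^2f$ directly from \eqref{Assum2}; the paper also uses dyadic annuli for the tail of the Fourier integral while you use a continuous change of variables. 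Both variants are routine and lead to the same conclusion.
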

\begin{proof} 
First we will prove that $\Psi(|\xi|)\asymp \Phi(\xi)$. The inequality $\Phi(\xi)\leq \Psi(|\xi|)$ follows directly from the definition of $\Psi$ so we have to prove only the opposite estimate.
Using Lemma \ref{l:doubling} for $|\xi|>\frac{1}{r_0}$ we get
  \begin{eqnarray*}
  \Phi(\xi) 
  &   =  & \int \left(1-\cos(\scalp{\xi}{y}) \right) \bar\nu(y)\, dy \\
  & \geq & m_1 \int \left(1-\cos(\scalp{\xi}{y}) \right) f(|y|)\, dy \\
  & \geq &	c_1 \int_{|y|< \frac{1}{|\xi|}} \left(1-\cos(\scalp{\xi}{y}) \right) \frac{\Psi\left(\frac{1}{|y|}\right)}{|y|^d} \, dy \\
  & \geq &	c_2 \Psi(|\xi|) \int_{|y|< \frac{1}{|\xi|}} \left(\scalp{\xi}{y} \right)^2 \frac{1}{|y|^d} \, dy \\
  &  =   & c_3 \Psi(|\xi|),
\end{eqnarray*}
since $\int_{|y|< \frac{1}{|\xi|}} \left(\scalp{\xi}{y} \right)^2 \frac{1}{|y|^d} \, dy=
|\xi|^2 \int_{|y|< \frac{1}{|\xi|}} \left(\scalp{\frac{\xi}{|\xi|}}{y} \right)^2 \frac{1}{|y|^d} \, dy
=|\xi|^2 \int_{|y|< \frac{1}{|\xi|}} \frac{y_1^2}{|y|^d} \, dy = const.$, where we use
the rotational invariance of the Lebesgue measure. 
This and Lemma \ref{l:doubling} yield
\begin{equation}\label{eq:Psi_f}
	\Phi(\xi) \asymp \Psi(|\xi|) \asymp f(1/|\xi|)|\xi|^{-d},\quad |\xi| >\frac{1}{r_0}.
\end{equation}
For $|\xi|\leq \frac{1}{r_0}$ we have
\begin{displaymath}
	\Phi(\xi) = \int \left(1-\cos(\scalp{\xi}{y}) \right) \bar\nu(y)\, dy \geq c_4 \int_{|y|<r_0} |\scalp{\xi}{y}|^2 f(|y|)\, dy \geq c_5 |\xi|^2.
\end{displaymath}
Further, by \eqref{eq:PsiH} we have
\begin{displaymath}
	\Psi(r) \asymp r^2 \int_{|y|<r_0} |y|^2 \, \nu(dy),\quad r\leq \frac{1}{r_0},
\end{displaymath}
and so
\begin{equation}\label{eq:Psi_2}
	\Phi(\xi) \asymp \Psi(|\xi|) \asymp |\xi|^2,\quad |\xi| \leq \frac{1}{r_0}.
\end{equation}
It follows from \eqref{eq:Psi_f},\eqref{eq:Psi_2} and \eqref{Assum2} that there exist $L>1$ and $c_*>1$ such that
\begin{displaymath}
	\Psi(Lr) \geq c_* \Psi(r),\quad r>0.
\end{displaymath}
This yields
\begin{displaymath}
	\Psi(L^n/h(t)) \geq c_*^n \Psi(1/h(t)) = \frac{c_*^n}{t}, \quad t>0,n\in\N,
\end{displaymath}
and we obtain
\begin{eqnarray*}
  \int_{\Rd} e^{-t\Re\left(\Phi(\xi)\right)}|\xi|\, d\xi
  & \leq & \int e^{-c_6 t \Psi(|\xi|)} |\xi|\, d\xi \\
  &   =  & \int_{|\xi|\leq 1/h(t)} e^{-c_6 t \Psi(|\xi|)} |\xi|\, d\xi + \int_{|\xi| > 1/h(t)} e^{-c_6 t \Psi(|\xi|)} |\xi|\, d\xi \\
  & \leq & c_7 h(t)^{-d-1} + \sum_{n=0}^\infty \int_{\frac{L^n}{h(t)}<|\xi|\leq \frac{L^{n+1}}{h(t)}} e^{-c_6 t \Psi(|\xi|)} |\xi|\, d\xi \\
  & \leq & c_7 h(t)^{-d-1} + c_8 \sum_{n=0}^\infty e^{-c_6 t \Psi(L^n/h(t))} \left(\frac{L^{n+1}}{h(t)}\right)^{d+1} \\
  & \leq & c_7 h(t)^{-d-1} + c_9 h(t)^{-d-1} \sum_{n=0}^\infty e^{-c_6 c_*^n } L^{(n+1)(d+1)} \\
  &  =   & c_{10} h(t)^{-d-1},
\end{eqnarray*}
and the lemma follows.
\end{proof}

Using the above properties we can improve now the estimates obtained previously in Theorem \ref{tw:zgory_truncated} in the general case.

\begin{twierdzenie}\label{tw:abstrunc}
  Assume that $\supp(\nu)\subset B(0,r_0)$, 
  $\nu$ is symmetric and absolutely continuous with respect to the Lebesgue measure on $\Rd\setminus\{0\}$ with
  a density $\bar\nu$ and there exists a nonincreasing function $f:[0,r_0]\to [0,\infty]$ 
  such that 
  \begin{displaymath}
	  m_1 f(|x|) \leq \bar\nu(x) \leq m_2 f(|x|),\quad 0<|x|<r_0.
  \end{displaymath}
  where $f$ satisfies \eqref{Assum2},
  %on $(0,r_0]$ with constants  $d<\beta_1\leq \beta_2 <d+2$, 
  and $\kappa = \inf_{s\in (0,r_0]} f(s) > 0$.
  Then there exist $c_1,c_2,c_3,c_4$, $\eta_1$ and $C^*$ such that
  \begin{equation}\label{eq:teza<}
    p_t(x) \leq c_1 \left\{
    \begin{array}{lll}
      h(t)^{-d} & \mbox{for} & |x| \leq \eta_1 h(t),\,t>0,\\
      t f(|x|) & \mbox{for} & \eta_1 h(t)\leq |x| \leq r_0 ,\,t\leq t_1, \\
      h(t)^{-d} \exp\left\{-\frac{ c_2 |x|^2}{ t}\right\}, & \mbox{for} & \eta_1 h(t)\leq |x|\leq C^* t,\, t\geq t_1, \\
      \exp\left\{-c_3 |x| \log\left(\frac{c_4 |x|}{t}\right)\right\}, & \mbox{for} & |x| \geq r_0\vee C^*t,\, t>0,
    \end{array}
    \right.
  \end{equation}
  where $t_1=r_0/C^*$.
\end{twierdzenie}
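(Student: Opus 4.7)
The plan is to combine Proposition \ref{prop1} (for the polynomial off-diagonal regime in region 2), Lemma \ref{th:p_est_above} (for the sub-Gaussian regime in region 3 and the super-polynomial exponential regime in region 4), and the on-diagonal bound \eqref{eq:p_est_below1} (region 1). Lemma \ref{l:Assum2Assum} first guarantees that \eqref{Assum} holds with $t_p = \infty$, so all of these tools apply. Region 1 then follows at once from \eqref{eq:p_est_below1} upon setting $\eta_1 := \theta$.

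For region 2 I would apply Proposition \ref{prop1} with $\gamma = d$. The condition \eqref{eq:nu_estim} is immediate from $\bar\nu \leq m_2 f$ together with $|A| \leq \omega_d \diam(A)^d$. To verify \eqref{eq:tech_assumpt} I would split at $|y| = s$: on $\{r < |y| \leq s\}$ one has $f(s - |y|/2) \leq f(s/2) \leq c f(s)$ by doubling, and on $\{|y| > s\}$ one has $f(|y|/2) \leq c f(|y|)$, again by doubling. Both pieces are bounded by $c f(s) \Psi(1/r)$ after invoking Lemma \ref{l:doubling}, which yields $\nu(\{|y| > r\}) \leq c \Psi(1/r)$ and, via the scaling \eqref{Assum2} with $\beta_1 > d$, also $\int_{|y| > s} f(|y|)\, \nu(dy) \leq c f(s) \Psi(1/s) \leq c f(s) \Psi(1/r)$. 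Proposition \ref{prop1} then delivers
\begin{equation*}
  p_t(x) \leq c \bigl( t f(|x|/4) + h(t)^{-d}\, e^{-c_2 |x|/h(t)\, \log(1 + c_3 |x|/h(t))} \bigr),
\end{equation*}
and doubling gives $f(|x|/4) \asymp f(|x|)$. To absorb the exponential tail into $t f(|x|)$ I use Lemma \ref{l:doubling} to write $t f(|x|) \asymp t \Psi(1/|x|)/|x|^d$ and invoke the two-sided polynomial scaling of $\Psi$ inherited from \eqref{Assum2}: for $|x| \geq h(t)$ this gives $t f(|x|) \geq c (h(t)/|x|)^{\beta}/|x|^d$ with some $\beta \in (d, d+2)$, reducing the claim to $e^{-c_2 u \log(1 + c_3 u)} \leq C u^{-\beta}$ for $u = |x|/h(t) \geq \eta_1$, which is elementary since the exponential beats every polynomial. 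For the remaining sub-case $\eta_1 h(t) \leq |x| < h(t)$, the on-diagonal bound $p_t(x) \leq c h(t)^{-d}$ combined with the identity $t f(h(t)) \asymp h(t)^{-d}$ (a direct consequence of $\Psi(1/h(t)) = 1/t$ and Lemma \ref{l:doubling}) closes the estimate.

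For region 3 I would apply Lemma \ref{th:p_est_above} via condition \eqref{eq:1}: since $\supp \nu \subset B(0, r_0)$, $\int |y|^2 e^{|\xi||y|}\, \nu(dy) \leq e^{|\xi| r_0}\, m_0$, so taking $M_6 = 1/r_0$ and $M_5 = e m_0$ gives $p_t(x) \leq p_t(0)\, e^{-|x|^2/(4 t e m_0)} \leq c h(t)^{-d}\, e^{-|x|^2/(4 t e m_0)}$ for $|x| \leq 2 e m_0 t / r_0$; this fixes $C^* := 2 e m_0 / r_0$ and $t_1 := r_0/C^*$. For region 4, \eqref{kwadratbelow} of Lemma \ref{th:p_est_above} combined with \eqref{eq:p_est_below1} yields $p_t(x) \leq c h(t)^{-d}\, e^{-(|x|/(4 r_0)) \log(r_0 |x|/(2 t m_0))}$ for $|x| \geq C^* t$. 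For $t \geq t_1$ the factor $h(t)^{-d}$ is bounded by a constant (using \eqref{eq:Lh2}), so it is harmless. For $t < t_1$ (and thus $|x| \geq r_0$), I would instead invoke the exponential from Proposition \ref{prop1}: the coefficient $|x|/h(t) \geq r_0/h(t)$ blows up polynomially in $1/t$ as $t \to 0$, dominating both $\log(h(t)^{-d})$ and the target exponent $c_3 |x| \log(c_4 |x|/t)$, so the $h(t)^{-d}$ factor is trivially absorbed.

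The main technical obstacle is the verification of the tail bound \eqref{eq:tech_assumpt} and the careful passage between the $|x|/h(t)$ scale (natural to Proposition \ref{prop1}) and the $|x|/t$ scale (appearing in the theorem statement) when reconciling $h(t)^{-d}$ with the exponential in region 4. Both rely essentially on Lemma \ref{l:doubling} and the two-sided polynomial control of $f$ encoded in \eqref{Assum2}.
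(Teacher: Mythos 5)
Your handling of regions 1 and 3 matches the paper exactly, and the broad strategy for region 2 is also right, though two details should be noted: \eqref{eq:nu_estim}--\eqref{eq:tech_assumpt} require $f$ to be defined on all of $[0,\infty)$, so one must first extend it (the paper sets $f_*(s)=\kappa$ for $s\geq r_0$; setting it to $0$ would break \eqref{eq:tech_assumpt}); and your tail estimate $\int_{|y|>s}f(|y|)\,\nu(dy)\leq cf(s)\Psi(1/s)$ is more easily obtained from monotonicity of $f$ and $\nu(B(0,s)^c)\leq c\Psi(1/s)$ than from the scaling in \eqref{Assum2}.

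The genuine gap is in region 4 for $t<t_1$. You propose to ``invoke the exponential from Proposition \ref{prop1}'' to absorb $h(t)^{-d}$, but Proposition \ref{prop1} does not supply that exponential in isolation; it gives
\[
p_t(x)\leq c\Bigl(\,t\,f_*(|x|/4)\;+\;h(t)^{-d}e^{-c_2\frac{|x|}{h(t)}\log\left(1+\frac{c_3|x|}{h(t)}\right)}\Bigr),
\]
and the polynomial term is decisive. For $|x|\geq 4r_0$ it equals $t\kappa$, which decays only linearly in $t$, whereas the claimed target $e^{-c_3|x|\log(c_4|x|/t)}$ decays super-exponentially in $1/t$ once $|x|$ grows with $1/t$ (already for $|x|\sim 1/t$). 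So $p_t(x)\leq ct\kappa$ cannot yield the region-4 bound for large $|x|$. On the other hand, trying to use \eqref{kwadratbelow} alone and absorb $h(t)^{-d}$ into the exponent also fails for $|x|$ near $r_0$: the best one has is $h(t)\geq (c t)^{1/(\beta_1-d)}$, so $\log\bigl(h(t)^{-d}\bigr)\sim\frac{d}{\beta_1-d}\log(1/t)$, while the available exponent $\frac{|x|}{4r_0}\log\frac{r_0|x|}{2tm_0}$ is only $\sim\frac{1}{4}\log(1/t)$ at $|x|=r_0$; when $\beta_1$ is close to $d$ (permitted by \eqref{Assum2}) the former dominates and absorption is impossible there.

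The paper resolves this by splitting region 4 at a threshold $R_0$ chosen so that $\frac{|x|}{8r_0}\geq\frac{d}{\beta_1-d}$ for $|x|\geq R_0$ (plus the other constraints in $R_0$): on $|x|\geq R_0$ use \eqref{kwadratbelow} and absorb $h(t)^{-d}$ by halving the exponent via $h(t)\geq (ct)^{1/(\beta_1-d)}$, and on $r_0\vee C^*t\leq|x|\leq R_0$ use the polynomial estimate $p_t(x)\leq c\kappa t$, which does dominate the target there because the exponent $c_3|x|\log(c_4|x|/t)$ is bounded by a fixed multiple of $\log(1/t)$ on a bounded $|x|$-range, so the target reduces to a single fixed power of $t$ that $\kappa t$ beats once $c_3$ is taken small enough (of order $1/R_0$). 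Without this two-case split, the region-4 estimate is not established.
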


\begin{proof}
  First we prove the second inequality. Let $f_*(s)=f(s)$ for $s<r_0$ and $f_*(s)= \kappa$ for $s\geq r_0$. We have 
  $\nu(A)\leq c_1 f_*(\delta(A))) \left(\diam (A)\right)^d$ for every Borel set $A$. 
  It follows from \eqref{Assum2} that 
  $f_*\left(s\vee |y|-\frac{|y|}{2} \right) \leq f_*\left(\frac{s}{2}\right) \leq  c_2 f_*(s)$, 
  for all $y\in\Rd$, $s>0$, and \eqref{eq:tech_assumpt} holds for $f_*$ since $\nu(B(0,r)^c)\leq c_3 \Psi(1/r)$ by \eqref{eq:PsiH}. 
  Lemma \ref{l:Assum2Assum} yields that \eqref{Assum} holds and from Proposition \ref{prop1} we obtain
  \begin{equation}\label{eq:basicest}
    p_t(x) 
   \leq  c_4 \left(h(t)\right)^{-d} \min\left\{ 1, t\left[h(t)\right]^{d}
           f_*\left(|x|/4\right)
          + \, e^{-c_5 \frac{|x|}{h(t)}\log\left(1+\frac{c_6|x|}{h(t)}\right)}
            \right\},
  \end{equation}
  for all $x\in\Rd\setminus \{ 0\}$ and $t>0$. Now we will show that
  \begin{displaymath}
	   t\left[h(t)\right]^{d} f_*\left(|x|/4\right) \geq c e^{-c_5 \frac{|x|}{h(t)}\log\left(1+\frac{c_6|x|}{h(t)}\right)},
  \end{displaymath}
  for $|x|>\eta_1 h(t)$, and hence the exponential term in \eqref{eq:basicest} can be omitted.
  We observe that \eqref{eq:Psi_f} and \eqref{eq:Psi_2} yield
  \begin{displaymath}%\label{eq:todpsi}
    t = \frac{1}{\Psi(1/h(t))} \asymp \left\{
    \begin{array}{lll}
       h(t)^2 & \mbox{for} & t \geq \frac{1}{\Psi(1/r_0)}, \\
       \frac{1}{f(h(t))h(t)^{d}} & \mbox{for} & t < \frac{1}{\Psi(1/r_0)}.
    \end{array}
    \right.
  \end{displaymath}
  Therefore, for $|x|>4r_0$ and $t\geq \frac{1}{\Psi(1/r_0)} $ we have 
  $t\left[h(t)\right]^{d} f_*\left(|x|/4\right) = t\left[h(t)\right]^{d} \kappa 
  \geq (|x|/h(t))^{-d-2}|x|^{d+2}
  \geq c_7 e^{-c_5 \frac{|x|}{h(t)}\log\left(1+\frac{c_6|x|}{h(t)}\right)}$,
  and for $|x|>4r_0$ and $t\leq \frac{1}{\Psi(1/r_0)} $ by \eqref{Assum2} we have 
  $t\left[h(t)\right]^{d} f_*\left(|x|/4\right) = t\left[h(t)\right]^{d} \kappa 
  \geq c_8/f(h(t)) \geq c_9 (h(t))^{\beta_2} \geq 
    c_{10} e^{-c_5 \frac{4r_0}{h(t)}\log\left(1+\frac{c_6 4r_0}{h(t)}\right)}
  \geq c_{10} e^{-c_5 \frac{|x|}{h(t)}\log\left(1+\frac{c_6|x|}{h(t)}\right)}$.
  For $|x|\leq 4r_0$ and a constant $\eta_1<4$ from Lemma \ref{l:doubling} we get
  \begin{equation}\label{eq:fPsi2}
    f(|x|/4) \asymp \frac{\Psi(4/|x|)}{|x|^d} \geq \frac{\Psi(\eta_1/|x|)}{|x|^d}.
  \end{equation}
  We note that \eqref{eq:PsiH} yields 
\begin{equation}\label{eq:dbl_Psi}
  \Psi(2r)\leq 2 H(2r) \leq 8 H(r) \leq  \frac{8}{L_0} \Psi(r),\quad r>0. 
\end{equation}
  For $\eta_1 h(t) < |x| \leq 4r_0$ from \eqref{eq:fPsi2} and \eqref{eq:dbl_Psi} we obtain
  \begin{eqnarray*}
    th(t)^{d}f(|x|/4) 
    &   =  & \frac{h(t)^{d}f(|x|/4)}{\Psi(1/h(t))} 
          \geq c_{11} \frac{h(t)^d \Psi(\eta_1/|x|)}{\Psi(1/h(t)) |x|^d} \\
    & \geq & \frac{c_{11} L_0}{8\eta_1^d} \left(\frac{|x|}{\eta_1 h(t)}\right)^{-\log_2 \frac{8}{L_0}-d} \\
    & \geq & c_{12} e^{-c_5 \frac{|x|}{h(t)}\log\left(1+\frac{c_6|x|}{h(t)}\right)},
  \end{eqnarray*}
  hence \eqref{eq:basicest} and \eqref{Assum2} yield
  \begin{equation}\label{eq:p<L}
	   p_t(x) \leq  c_{13} t f_*\left(|x|/4\right) \leq c_{14} t f_*(|x|), \quad |x|>\eta_1 h(t),
  \end{equation}
  for every $\eta_1\in (0,4)$ and $c_{14}=c_{14}(\eta_1)$, which gives in particular the second case in \eqref{eq:teza<}.

  Now we will prove the last inequality. Lemma \ref{th:p_est_above} yields
  \begin{displaymath}
    p_t(x)\leq c_{15} h(t)^{-d} e^{\frac{-|x|}{4r_0}\log(\frac{r_0|x|}{2tm_0})},\quad 
    |x|\geq\frac{2em_0}{r_0}\, t.
  \end{displaymath}  
  From Lemma \ref{l:doubling} we have $\Psi(1/r) \asymp r^d f(r)$, and since 
  $r^{\beta_1}f(r) \geq M_1 r_0^{\beta_1}f(r_0) \geq M_1 r_0^{\beta_1}\kappa$, 
  we get $\Psi(1/r) > c_{16} r^{d-\beta_1}$ for $r\leq r_0$.
  This yields $\frac{1}{r} \geq \Psi^{-1}(c_{16}r^{d-\beta_1})$ and $h(r^{\beta_1-d}/c_{16}) \geq r$, hence we obtain
  $h(t)\geq (c_{16} t )^{1/(\beta_1-d)}$ for $t<r_0^{\beta_1-d}/c_{16}$. For $|x|> \frac{2m_0}{c_{16} r_0}$ we get
  \begin{displaymath}
	  h(t) \geq \left(\frac{2tm_0}{r_0|x|}\right)^{1/(\beta_1-d)},
  \end{displaymath}
  and this yields
  \begin{eqnarray*}
	  h(t)^{-d} \exp\left\{-\frac{|x|}{4r_0}\log\left(\frac{r_0|x|}{2tm_0}\right) \right\}
	  &  =   & h(t)^{-d} \left(\frac{2tm_0}{r_0|x|}\right)^{\frac{|x|}{4r_0}}   \\
	  & \leq & \left(\frac{2tm_0}{r_0|x|}\right)^{\frac{|x|}{4r_0}-\frac{d}{\beta_1-d}} \leq \left(\frac{2tm_0}{r_0|x|}\right)^{\frac{|x|}{8r_0} }\\
	  &   =  & \exp\left\{ -\frac{|x|}{8r_0}\log\left( \frac{r_0 |x|}{2tm_0} \right)\right\},
  \end{eqnarray*}
  provided $|x|\geq R_0=\max\{\frac{8r_0 d}{\beta_1-d},\frac{2r_0^{\beta_1-d} m_0}{c_{16} r_0},\frac{2m_0}{c_{16} r_0}\}$ and $t<r_0^{\beta_1-d}/c_{16}$.
  For $t\geq r_0^{\beta_1-d}/c_{16}$ we have $h(t)^{-d} \leq h(r_0^{\beta_1-d}/c_{16})^{-d} \leq r_0^{-d}$. We obtain
  \begin{equation}\label{eq:pelog}
	  p_t(x) \leq c_{17} e^{\frac{-|x|}{8r_0}\log(\frac{r_0|x|}{2tm_0})},\quad |x|\geq R_0\vee \frac{2em_0}{r_0}\, t,\, t>0.
  \end{equation}
  
  For $r_0\vee (\frac{2em_0}{r_0}\, t) \leq |x|\leq R_0$ we observe that
  \begin{displaymath}
	  c_{14} t f_*(|x|) = c_{14} \kappa t \leq c_{18} e^{\frac{-|x|}{R_0}\log(\frac{r_0|x|}{2tm_0})},
  \end{displaymath}
  where $c_{18}=\frac{c_{14}\kappa r_0}{2 m_0}R_0$, and the last inequality in \eqref{eq:teza<} with $C^*=\frac{2em_0}{r_0}$ follows from this, \eqref{eq:pelog} and \eqref{eq:p<L} since $\eta_1 h(t)\leq r_0$,
  for $t\leq \frac{r_0R_0}{2em_0}$ and $\eta_1=\theta \wedge \sqrt{\frac{er_0}{R_0}}\wedge 1$ (note that $\eta_1 h(t) \leq r_0$,
   provided $t\leq \frac{1}{\Psi(\eta_1/r_0)}$, 
  and that $\Psi(\eta_1/r_0)\leq 2 m_0\eta_1^2/r_0^2$ by \eqref{eq:PsiH}, since $\eta_1\leq 1$).
  
  Taking $M_6=\frac{1}{r_0}$ and $M_5=em_0$ in \eqref{eq:1} we get
  \begin{displaymath}
    p_t(x)\leq c_{19} h(t)^{-d} e^{-\frac{|x|^2}{4tem_0}},\quad |x|\leq \frac{2e m_0}{r_0 } t,
  \end{displaymath}
  which gives the third inequality in \eqref{eq:p<L}.
  The first inequality in \eqref{eq:p<L} follows from \eqref{eq:p_est_below1}, since $\eta_1 \leq \theta$.
 \end{proof} 

%We summarize now the both side estimates obtained in Theorem \ref{tw:zdolu_truncated} and Theorem \ref{tw:abstrunc}.
We can prove now Theorem \ref{tw:obustronne}.

\begin{proof}[Proof of Theorem \ref{tw:obustronne}.]
 We use here the constants $C_*,t_0,\eta_0$ from Theorem \ref{tw:zdolu_truncated}. We note that $C^* > C_*$ and $t_1 < t_0$ since $\eta_0\leq 1$ and $L_0\leq 2$. Let
 \begin{displaymath}
	 \eta_* = \eta_0 \vee \eta_1.
 \end{displaymath}
 We obtain the first estimate using \eqref{eq:p_est_below1} since $\eta_0\vee \eta_1<\theta$. 
 The inequalities in 2. and 4. follows directly from Theorem \ref{tw:zdolu_truncated} and Theorem \ref{tw:abstrunc}.
 Similarly, the third estimate for $\eta_* h(t)\leq |x| \leq C_* t$, $t\geq t_0$ is a direct consequence of these theorems. For 
 $r_0 \vee C_* t \leq |x| \leq C^* t$ and $t\geq t_1$ we have
 \begin{displaymath}
    p_t(x) \geq c_1 e^{-c_2 |x| \log(\frac{c_3|x|}{t})} \geq c_1 e^{-c_2 |x| \log(c_3 C^*)}	
    \geq c_1 h(t_1)^d h(t)^{-d} e^{-c_2 C_*^{-1} \log(c_3 C^* ) \frac{|x|^2}{t}},
 \end{displaymath}
 and for $t_1\leq t \leq t_0$ and $\eta_* h(t) \leq |x| \leq r_0$ we have
 \begin{displaymath}
	 tf(|x|) \asymp h(t)^{-d} e^{-\frac{c_4|x|^2}{t}} \asymp const. 
 \end{displaymath}
 and the inequalities in 4. follow. 
\end{proof}
 
%---------------------------------------------------------------------------------------

\section{Application to tempered stable processes}\label{section_tempered}
Let
\begin{displaymath}
  \nu(A) \asymp \int_0^\infty \int_\sfera \indyk{A}(s\theta) s^{-1-\alpha} (1+s)^{\kappa} e^{-m s^\beta}\, ds\mu(d\theta),
\end{displaymath} 
where $\mu$ is bounded, symmetric and nondegenerate measure on the unit sphere $\sfera$, 
$m > 0$, $\beta\in (0,1]$, $\alpha\in (0,2)$,
$\kappa\in (-\infty,1+\alpha]$. 

We have here 
\begin{equation}\label{eq:PhiPsixi}
	\Phi(\xi) \asymp \Psi(|\xi|)\asymp |\xi|^2\wedge |\xi|^\alpha,
\end{equation}
which follows from Proposition 1 and Corollary 2 in \cite{KSz2}. We get
\begin{equation}\label{eq:hest}
	h(t)\asymp t^{1/2}\wedge t^{1/\alpha},
\end{equation}
and Lemma 5 in \cite{KSz2} yields that \eqref{Assum}
is satisfied with $t_p=\infty$. 

Such examples were discussed previously in \cite{S11},\cite{KSz2} and \cite{KSz3}.
It was proved (see also Proposition \ref{prop1} above) that if $\mu$ is a $\gamma-1$ - measure on $\sfera$, i.e., there exists a constants $c$ such that
\begin{displaymath}
	\mu\left( B(\theta,\rho) \cap \sfera \right) \leq c \rho^{\gamma -1 },\quad \theta\in\sfera,
\end{displaymath}
and that there exist $D_0\subset\sfera$ and $c>0$ such that 
\begin{displaymath}
  \mu\left( B(\theta,\rho) \cap \sfera \right) \geq c \rho^{\gamma -1 }, \quad \theta \in D_0,
\end{displaymath} 
for some $\gamma\in [1,d]$, then
\begin{eqnarray*}
  p_t(x) 
  & \leq & c_1 t^{-d/\alpha} \min\left\{ 1, t^{1+\gamma/\alpha}
           |x|^{-\gamma-\alpha}(1+|x|)^{\kappa} e^{-m|x|^\beta/4^\beta} 
          + \, e^{-c_2 t^{-1/\alpha}|x|\log\left(1+c_3t^{-1/\alpha}|x|\right)}
            \right\},\\
   & \leq & c_4 t^{-d/\alpha} \min\left\{ 1, t^{1+\gamma/\alpha}
           |x|^{-\gamma-\alpha}(1+|x|)^{\kappa} e^{-m|x|^\beta/4^\beta}
            \right\},\\
   &      & x\in\Rd,\, t\in (0,1],
\end{eqnarray*}
and
\begin{eqnarray*}
  p_t(x) 
  & \leq & c_4 t^{-d/2} \min\left\{ 1, t^{1+\gamma/2}
           |x|^{-\gamma-\alpha}(1+|x|)^{\kappa} e^{-m|x|^\beta/4^\beta} 
          + \, e^{-c_5 t^{-1/2}|x|\log\left(1+c_6 t^{-1/2}|x|\right)}
            \right\},\\
   &     & x\in\Rd,\, t\in (1,\infty).
\end{eqnarray*}
Note that we can omit the exponential term in the first estimate since 
there exists $c>0$ such that $s^{-\gamma-\alpha}(1+s)^\kappa e^{-ms^\beta/4^\beta} 
\geq c e^{-c_2s\log(1+c_3s)}$ for $s>0$ and for $t<1$ we have
$t^{1+\gamma/\alpha} |x|^{-\gamma-\alpha}(1+|x|)^{\kappa} e^{-m|x|^\beta/4^\beta}
\geq c(t^{-1/\alpha}|x|)^{-\gamma-\alpha}(1+t^{-1/\alpha}|x|)^{\kappa} 
e^{-m(t^{-1/\alpha}|x|)^\beta/4^\beta}.$ Similar procedure for large times is not possible.

More precise estimates for small $t$ were obtained in \cite{KSz3}. If $(\beta,\kappa) \in (0,1)\times (-\infty,1+\alpha]$  or $(\beta,\kappa)\in \{ 1 \}\times (-\infty,\alpha)$ then 
\begin{displaymath}
	p_t(x) \leq c_7 t^{1+\frac{\gamma-d}{\alpha}} |x|^{-\gamma-\alpha+\kappa} e^{-m|x|^\beta}, \quad t\in (0,1], |x|\geq 4,
\end{displaymath} 
and 
\begin{displaymath}
	p_t(x) \geq c_8 t^{1+\frac{\gamma-d}{\alpha}} |x|^{-\gamma-\alpha+\kappa} e^{-m|x|^\beta}, \quad t\in (0,1], x\in D,
\end{displaymath}
where $D=\{x\in\Rd:\: x=r\theta,\, r\geq 4,\, \theta\in D_0\}$.

Here we improve the estimates for large values of $t$.

%\begin{twierdzenie}\label{t:esttemp1} There exist constants $c_1,c_2,c_3,c_4,c_5,\eta,t_0,L$ such  that
%  \begin{equation}\label{eq:temp1}
%  p_t(x) \leq c_1 t^{-d/2} \left( e^{ \frac{-|x|^2}{16t L}  } 
%               + (1+t^{1+\gamma/2}|x|^{-\gamma-\alpha}(1+|x|)^\kappa) e^{\frac{-m|x|^\beta}{4^\beta}} \right),
%  \end{equation}
%  for $x\in\Rd,\, t> t_0,$
%  and
%  \begin{displaymath}
%	  p_t(x) \geq c_2 t^{-d/2} \left( e^{ \frac{-c_3|x|^2}{t}  } 
%                +  t \nu (B(x,c_4 \sqrt{t})) \right),\quad  \eta \sqrt{t} \leq |x| \leq c_5 t,\, t>t_0.
%  \end{displaymath}
%  In particular, if 
%  \begin{equation}\label{eq:approxtemp}
%    \nu(dx)\asymp |x|^{-d-\alpha} (1+|x|)^{\kappa} e^{-m |x|^\beta}\, dx,\quad x\in\Rd,
%  \end{equation} 
%  then there exist $c_6,c_7,c_8,c_9$ such that
%  \begin{equation}\label{densityversion}
%   c_6 t^{-d/2} \left( e^{ \frac{-c_7 |x|^2}{t}  } 
%               + e^{-c_8 |x|^\beta} \right)  \leq p_t(x) \leq c_9 t^{-d/2} \left( e^{ \frac{-|x|^2}{16tL}  } 
%                + e^{\frac{-m|x|^\beta}{2\cdot 4^\beta}} \right), \quad x\in\Rd,\, t> t_0.
%  \end{equation}
%\end{twierdzenie}

\begin{proof}[Proof of Theorem \ref{t:esttemp1}.]

We will need the following preparation. As usual (see \cite{S11,KSz2,KSz3}) we divide the L\'evy measure in the two parts. For $r>0$ we denote 
$$\tilde{\nu}_r(dy)=\indyk{B(0,r)}(y)\nu(dy) \quad \text{and} \quad \nubounded{r}(dy)= \indyk{B(0,r)^c}(y)\,\nu(dy).$$
In terms of the corresponding L\'evy process, $\tilde{\nu}_r$ is related to the jumps which are close to the origin, while $\nubounded{r}$ represents the large jumps.

For the restricted L\'evy measures we consider the two semigroups of measures $\{\tilde{P}^r_t,\; t\geq 0\}$ and $\{\bar{P}^r_t,\; t\geq 0\}$ such that
$$
  \Fourier(\tilde{P}^r_t)(\xi) 
    =     \exp\left(t \int \left(e^{i\scalp{\xi}{y}}-1-i\scalp{\xi}{y}\right)
            \tilde{\nu}_r(dy)\right)\, ,  \quad \xi\in\Rd\,,
$$
and
\begin{displaymath}%\label{eq:FPbar}
  \Fourier(\bar{P}^r_t)(\xi) =
  \exp\left(t \int (e^{i\scalp{\xi}{y}}-1)\,
  \nubounded{r}(dy)\right)\, ,
  \quad \xi\in\Rd\, ,
\end{displaymath}
respectively. We have
\begin{eqnarray}\label{eq:FTildePEstimate}
  |\Fourier(\tilde{P}^r_t)(\xi)| 
  &   =  & \exp\left(-t\int_{|y|<r}
           (1-\cos(\scalp{y}{\xi}))\,
           \nu(dy)\right) \nonumber \\
  &   =  & \exp\left(-t\left(\Re(\Phi(\xi))-\int_{|y|\geq r}
           (1-\cos(\scalp{y}{\xi}))\,
           \nu(dy)\right)\right) \nonumber \\
  & \leq & \exp(-t\Re(\Phi(\xi)))\exp(2t\nu(B(0,r)^c)),
           \quad \xi\in\Rd,
\end{eqnarray}
and, therefore, by \eqref{eq:PhiPsixi}, for every $r>0$ and $t>0$ the measures $\tilde{P}^r_t$ are absolutely continuous with respect to
the Lebesgue measure with densities $\tilde{p}^r_t\in C^1_b(\Rd)$. 

We have 
\begin{align*}% \label{eq:basic_conv}
  P_t=\tilde{P}^r_t \ast \bar{P}^r_t ,\quad \ \text{and} \ \quad 
  p_t= \tilde{p}^r_t * \bar{P}^r_t, \quad t>0,
\end{align*}
where 
\begin{eqnarray}\label{eq:exp}
  \bar{P}^r_t
  &  =  & \exp(t(\nubounded{r}-|\nubounded{r}|\delta_0)) =  \sum_{n=0}^\infty \frac{t^n\left(\nubounded{r}-|\nubounded{r}|\delta_0)\right)^{n*}}{n!} \\
  &  =  & e^{-t|\nubounded{r}|} \sum_{n=0}^\infty \frac{t^n\nubounded{r}^{n*}}{n!}\,,\quad t\geq 0\, . \nonumber
\end{eqnarray}
 
We will estimate first the densities $\tilde{p}^r_t\in C^1_b(\Rd)$ using Lemma \ref{th:p_est_above} and Theorem 6 
of \cite{KSch2012}.
Let
\begin{displaymath}
  \nu(A) \leq c_1 \int_0^\infty \int_\sfera \indyk{A}(s\theta) s^{-1-\alpha} (1+s)^{\kappa} e^{-m s^\beta}\, ds\mu(d\theta),
\end{displaymath} 
and $L = c_1 |\mu| \int_0^\infty s^{1-\alpha}(1+s)^{\kappa}e^{-(1-2^{-\beta/2})ms^\beta}\, ds$.
Using Lemma \ref{th:p_est_above} with 
$M_5= L$, $M_6=\frac{1}{2^{\beta/2}}mr^{\beta-1} $ we get
\begin{equation}\label{eq:etp1}
	\tilde{p}_t^r(x) \leq \tilde{p}_t^r(0) e^{-\frac{|x|^2}{4tL}},\quad |x|\leq 2^{1-\beta/2} L mr^{\beta-1}t .
\end{equation}

Recall that
Theorem 6 of \cite{KSch2012} yields
  \begin{displaymath}
    \tilde{p}_t^r(x) \leq e^{-D^2_t(x)}\tilde{p}_t^r(0),\quad x\in\Rd,t>0,
  \end{displaymath}
  where $ D^2_t(x) = -v_t(\xi_0,x) $, 
  $ v_t(\xi,x) = -\scalp{\xi}{x} + t \int \left(\cosh(\scalp{\xi}{y})-1\right)\,\tilde{\nu}_r(dy), $
  and $\xi_0=\xi_0(t,x)\in\Rd$ is such that $v_t(\xi_0,x)=\inf_{\xi\in\Rd} v_t(\xi,x)$.

If $|x|>r$ then
\begin{eqnarray*}
	\int |y|^2 e^{|\xi||y|}\, \tilde{\nu}_r(dy) 
	& \leq & \int_{|y|<|x|} |y|^2 e^{|\xi||y|}\, \nu(dy) \\
	& \leq & c_1 |\mu| \int_0^{|x|} s^{1-\alpha} (1+s)^\kappa e^{-(1-2^{-{\beta/2}})ms^\beta}\, ds \leq L, 
\end{eqnarray*}
provided $|\xi|\leq \frac{1}{2^{\beta/2}}m|x|^{\beta-1}$.

Let $R_0=\frac{2^{\beta/2}(1-2^{-{\beta/2}})}{mL}$.
Taking $\xi_1=\frac{1}{2^{\beta/2}}m|x|^{\beta-2}x$
for $|x|\geq  (t/R_0)^{\frac{1}{2-\beta}}$ 
%$|x|\geq \left(m M_4\right)^{\frac{1}{2-\beta}} t^{\frac{1}{2-\beta}}$ 
we obtain
\begin{eqnarray*}
	v_t(\xi_1,x) \leq -\frac{1}{2^{\beta/2}}m|x|^\beta + t \frac{1}{2^{\beta}}m^2 |x|^{2(\beta-1)} L
	& \leq & \frac{1}{2^{\beta/2} } m |x|^\beta \left( -1 + \frac{1}{2^{\beta/2} } t m |x|^{\beta-2}L \right) \\
	& \leq & -\frac{1}{2^\beta}m|x|^\beta .
\end{eqnarray*}
This yields
\begin{equation}\label{eq:etp2}
	\tilde{p}^r_t(x) \leq \tilde{p}^r_t(0) e^{-\frac{1}{2^\beta}m|x|^\beta},\quad 
	|x|\geq \max\{r,\left(R_0\right)^{\frac{1}{\beta-2}} t^{\frac{1}{2-\beta}}\}.
\end{equation}

As usual, below we will use $\tilde{P}^r_t$, $\tilde{p}^r_t$ and $\bar{P}^r_t$ with $r=h(t)$ and
for simplification we will write $\tilde{P}_t=\tilde{P}^{h(t)}_t$, $\tilde{p}_t=\tilde{p}^{h(t)}_t$ and $\bar{P}_t=\bar{P}^{h(t)}_t$.

For $t$ large enough, by \eqref{eq:etp1}, \eqref{eq:etp2} and \eqref{eq:hest} we have
\begin{displaymath}
	\tilde{p}_t(x) \leq \tilde{p}_t(0) e^{-\frac{|x|^2}{4tL}},\quad |x|\leq c_2 t^{\frac{\beta+1}{2}},
\end{displaymath}
and
\begin{displaymath}
	\tilde{p}_t(x) \leq \tilde{p}_t(0) e^{-\frac{1}{2^\beta}m|x|^\beta},\quad 
	|x|\geq c_3 t^{\frac{1}{2-\beta}},
\end{displaymath}
and since $\tilde{p}_t(0) \leq c_4 h(t)^{-d}$ (see Lemma 8 in \cite{KSz2}), and for sufficiently large $t$ we have 
$c_3 t^{\frac{1}{2-\beta}} \leq c_2 t^{\frac{\beta+1}{2}}$,
we obtain
\begin{equation}\label{eq:tildepest}
	\tilde{p}_t(x) \leq \tilde{p}_t(0) e^{-\left( \frac{|x|^2}{4tL}\wedge \frac{m|x|^\beta}{2^\beta}\right) }
	\leq c_4 h(t)^{-d}e^{- \left( \frac{|x|^2}{4 tL}\wedge \frac{m|x|^\beta}{2^\beta} \right) },\quad x\in\Rd.
\end{equation}

We have $\Psi(1/h(t))=1/t$ and it follows from Corollary 10 in \cite{KSz2} with $\gamma=1$ and (\ref{eq:exp}) that
\begin{equation}\label{eq:bar_P_schonwieder}
  \bar{P}_t(B(x,\rho)) \leq c_5 t f\left(|x|/4\right) \rho,
\end{equation}
for $\rho\leq \frac{1}{2}|x|$ and $t>0$,
where
\begin{displaymath}
	f(s) = s^{-1-\alpha}(1+s)^{\kappa} e^{-ms^\beta}, \quad s>0.
\end{displaymath}

We fix $t$ and denote
$$
  g(s)=e^{- \left( \frac{s^2}{4tL}\wedge \frac{m s^\beta}{2^\beta}\right) },\quad s\geq 0.
$$
We note that $g$ is decreasing and continuous on $[0,\infty)$, and the inverse function is given by
\begin{displaymath}
	g^{-1}(s) = \sqrt{4tL\log\frac{1}{s}}\vee \left(\frac{2^\beta \log\frac{1}{s}}{m}\right)^{\frac{1}{\beta}},\quad s\in (0,1].
\end{displaymath}

Using \eqref{eq:tildepest} and (\ref{eq:bar_P_schonwieder}) for $|x|>c_4 \sqrt{t}$, $t>1$ we obtain
\begin{eqnarray*}
  p_t(x) = \tilde{p}_t * \bar{P}_t (x)
  &  =   & \int \tilde{p}_t(x-y)\, \bar{P}_t(dy) \\
  & \leq & \int c_5 [h(t)]^{-d} g(|y-x|) \, \bar{P}_t(dy) \\
  &   =  & c_5 [h(t)]^{-d} \int \int_0^{g(|y-x|) } \, ds\, \bar{P}_t(dy) \\
  &   =  & c_5 [h(t)]^{-d} \int_0^1 \int \indyk{\left\{y\in\Rd:\: g(|y-x|)>s\right\} } \, \bar{P}_t(dy) ds \\
  &   =  & c_5 [h(t)]^{-d} \int_0^1 \bar{P}_t\left(B\left(x,g^{-1}(s) \right)\right) ds \\
  & \leq & c_5 c_6 [h(t)]^{-d} \left( \int_{g(|x|/2)}^1 t f\left(|x|/4\right) 
           g^{-1}(s)\, ds + \int_0^{g(|x|/2)}\, ds \right) \\
  %& \leq & c_4 c_5 [h(t)]^{-d} \left( tf\left(|x|/4\right) \left( c_3 [h(t)]^{\gamma}   + c_4 \right)+ g(|y-x|/2) \right) \\
  & \leq & c_7 t^{-d/2} \left( t^{3/2} f\left(|x|/4\right) 
           + e^{ -\left(\frac{|x|^2}{16 tL}\wedge \frac{m|x|^\beta}{4^\beta}\right)} \right). \\
  & \leq & c_8 t^{-d/2} \left( e^{ \frac{-|x|^2}{16tL}  } 
                + (1+t^{3/2}|x|^{-1-\alpha}(1+|x|)^\kappa)e^{\frac{-m|x|^\beta}{4^\beta}} \right)\\
  &   =  & c_8 t^{-d/2} \left( e^{ \frac{-|x|^2}{16tL}  } 
                + \left(1+\left( \frac{\sqrt{t}}{|x|}\right)^{3}|x|^{2-\alpha}(1+|x|)\right)^\kappa
								e^{\frac{-m|x|^\beta}{4^\beta}} \right) \\
	& \leq & c_9 t^{-d/2} \left( e^{ \frac{-|x|^2}{16tL}  } 
                + e^{\frac{-m|x|^\beta}{2\cdot 4^\beta}} \right),
\end{eqnarray*}
which yields \eqref{eq:temp1}.
If $|x|\leq c_4 \sqrt{t}$ then \eqref{eq:temp1} follows directly from 
\eqref{eq:p_est_below1}.

Taking $F(s) = c_{10} (s \wedge s^{\alpha-1})$ for $\alpha>1$
and $F(s) = c_{11} s$ for $\alpha\leq 1$ we obtain $F(s)\leq \Psi(s)/s$ for $\alpha>1$ and $s>0$, 
and for $\alpha\leq 1$ and $s\in (0,1)$. From 
Lemma \ref{below_exp2b} we get
\begin{equation}\label{eq:belowpt}
	p_t(x) \geq c_{12} t^{-d/2} e^{-c_{13} |x|^2/t},\quad |x|<c_{14} t,\, t>t_0.
\end{equation}

From Proposition \ref{th:p_est_below} it follows that
\begin{equation}\label{eq:bb}
	p_t(x) 
	\geq  c_{14} t^{1-d/2}  \nu (B(x,c_{15} \sqrt{t})), \quad |x|>\eta \sqrt{t},\, t>t_0,
\end{equation}
and \eqref{eq:temp2} follows from \eqref{eq:belowpt} and \eqref{eq:bb}.

If \eqref{eq:approxtemp} holds and $|x|\geq \eta \sqrt{t}$ then $\nu (B(x,c_{15} \sqrt{t})) \geq c_{16}t^{d/2} e^{- c_{17}|x|^\beta}$ for some
constants $c_{16},c_{17}$. Furthermore, we have
$c_{17} |x|^\beta < c_{18} |x|^2/t $ for $|x|\geq c_{14} t$, and \eqref{densityversion} follows from \eqref{eq:belowpt} and \eqref{eq:bb} for $\eta\sqrt{t} < |x| < c_{14} t$, from 
\eqref{eq:bb} for $|x| \geq c_{14}t$ and from \eqref{eq:p_est_below1} for $|x|\leq \eta \sqrt{t}$.

\end{proof}

%------------------------------------------------------------------------------
\section{High intensity of small jumps}\label{section_high}
We consider now an interesting example, which has been studied in \cite{Mimica1} and \cite{KSz2}. The exact estimates
of transition densities for small $x$ and small $t$ are still unreachable in this case, but using the above results we can improve them
significantly. Let $\nu$ be a L\'evy measure such that

\begin{equation}\label{eq:hi_assum}
	\nu(dx) \asymp |x|^{-d-2} \left[\log\left(\frac{2}{|x|}\right)\right]^{-\beta}\, dx,\quad |x|<1,
\end{equation} 
where $\beta > 1 $. 

This assumption gives following properties of the corresponding semigroup.

\begin{lemat}
  If the L\'evy measure $\nu $ satisfies \eqref{Levy1} and \eqref{eq:hi_assum} then
  \begin{equation}\label{eq:asympPhiPsi}
	  \Phi(\xi) \asymp \Psi(|\xi|) \asymp |\xi|^2 \left[\log(2|\xi|)\right]^{1-\beta},\quad |\xi| \geq 1.
  \end{equation}
  Furthermore, 
  \begin{displaymath}
	  h(t) \asymp t^{1/2} \left[\log\left(\frac{2}{t}\right)\right]^{(1-\beta)/2},\quad t<1,
  \end{displaymath}
  and \eqref{Assum} holds with $t_p=1$.
\end{lemat}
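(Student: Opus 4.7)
The plan is to compute $H(r)$ directly via polar coordinates, deduce the asymptotics of $\Psi$ from \eqref{eq:PsiH}, establish $\Phi\asymp\Psi$ by the standard cosine-expansion trick, and finally verify \eqref{Assum} with a dyadic decomposition analogous to that in Lemma \ref{l:Assum2Assum}.

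First, for $r\geq 1$ I would split $H(r)=\int (1\wedge r^2|y|^2)\,\nu(dy)$ at $|y|=1/r\leq 1$. Passing to polar coordinates and substituting $u=\log(2/s)$ in the inner piece yields
\[
\int_{|y|<1/r} r^2|y|^2\,\nu(dy)\asymp r^2\int_0^{1/r} s^{-1}[\log(2/s)]^{-\beta}\,ds=\frac{r^2[\log(2r)]^{1-\beta}}{\beta-1};
\]
a similar computation gives $\int_{1/r\leq |y|<1}\nu(dy)\asymp r^2[\log(2r)]^{-\beta}$, which is absorbed into the previous order for $r\geq 1$, while $\nu(\{|y|\geq 1\})$ is a finite constant. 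Then \eqref{eq:PsiH} gives $\Psi(r)\asymp r^2[\log(2r)]^{1-\beta}$ for $r\geq 1$. The bound $\Phi(\xi)\leq \Psi(|\xi|)$ is immediate from the definition of $\Psi$; for the reverse I would retain only $|y|\leq 1/|\xi|$ in $\Phi(\xi)$, use $1-\cos(\scalp{\xi}{y})\geq (\scalp{\xi}{y})^2/4$ (valid since $|\scalp{\xi}{y}|\leq 1$ there), and exploit rotation-invariance of Lebesgue measure, $\int_{\sfera}(\scalp{\xi}{\theta})^2\,d\sigma(\theta)=c_d|\xi|^2$, to reduce to the same radial integral, obtaining $\Phi(\xi)\geq c|\xi|^2[\log(2|\xi|)]^{1-\beta}$.

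Taking logarithms of $\Psi(r)=1/t$ and iterating once yields $\log r \sim \tfrac{1}{2}\log(1/t)+\tfrac{\beta-1}{2}\log\log(2/t)$, hence $h(t)=1/\Psi^{-1}(1/t)\asymp t^{1/2}[\log(2/t)]^{(1-\beta)/2}$. To verify \eqref{Assum} I would imitate Lemma \ref{l:Assum2Assum}: from the asymptotics,
\[
\Psi(Lr)/\Psi(r)\geq c\,L^2\bigl(1+\log L/\log 2\bigr)^{1-\beta}\geq c_*>1
\]
for all $r\geq 1$ once $L$ is taken sufficiently large. Splitting $\int_{\Rd} e^{-t\Phi(\xi)}|\xi|\,d\xi$ into $\{|\xi|\leq 1\vee h(t)^{-1}\}$ (bounded by $C h(t)^{-d-1}$, using that $h(t)$ stays in a compact subinterval of $(0,\infty)$ for $t\in(0,1)$) and dyadic shells $\{L^n/h(t)\leq |\xi|\leq L^{n+1}/h(t)\}$ on which $e^{-ct\Psi(|\xi|)}\leq e^{-cc_*^n}$ yields a convergent geometric sum bounded by $Ch(t)^{-d-1}$.

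The main obstacle is securing the doubling constant $c_*>1$ in the presence of the logarithmic correction: the polynomial factor $L^2$ must beat $(1+\log L/\log 2)^{\beta-1}$ for $L$ large, which is elementary but is the one place where the specific assumption $\beta>1$ interacts with the geometry. The remainder is bookkeeping of constants, together with the elementary remark that $h(t)^{-d-1}$ is bounded below on $(0,1)$, so the trivial low-frequency bound is absorbed uniformly into $M_0 h(t)^{-d-1}$.
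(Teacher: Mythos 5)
Your proposal is correct and follows essentially the same path as the paper: the polar-coordinate computation of $H(r)$ combined with \eqref{eq:PsiH} to get the $\Psi$-asymptotics, the lower bound on $\Phi$ via $1-\cos\theta\gtrsim\theta^2$ restricted to $|y|\leq1/|\xi|$ and rotation-invariance, and the dyadic decomposition based on a geometric doubling of $\Psi$ (the paper phrases this as $g^{-1}(2r)\leq c_*g^{-1}(r)$ with $g(s)=s^2[\log(2s)]^{1-\beta}$, which is equivalent to your $\Psi(Lr)\geq c_*\Psi(r)$). One minor slip: $h(t)$ does \emph{not} stay in a compact subinterval of $(0,\infty)$ for $t\in(0,1)$ (it tends to $0$ as $t\to0^+$); the correct observation, which you do make at the end, is that $h(t)$ is bounded above by $h(1)$ on $(0,1)$, so $h(t)^{-d-1}$ is bounded below and the constant low-frequency contribution is absorbed.
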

\begin{proof}
  For $|\xi|>1$ by \eqref{eq:PsiH} we have
  \begin{eqnarray*}
    \Phi(\xi) 
     %&  =   & \int \left(1-\cos\left(\scalp{\xi}{y}\right)\right)\, \nu(dy) \\
     %& \leq & 2 \int \left(|\scalp{\xi}{y}|^2 \wedge 1\right)\, \nu(dy) \\
     & \leq & 2 |\xi|^2 \int_{|y|\leq 1/|\xi|} |y|^2\, \nu(dy) + 2\int_{|y|>1/|\xi|} \,\nu(dy) \\
     & \leq & c_1 |\xi|^2 \int_0^{\frac{1}{|\xi|}} r^{-1}\left[\log\frac{2}{r}\right]^{-\beta}\, dr
              + c_1 \int_{\frac{1}{|\xi|}}^1 r^{-3} \left[\log\frac{2}{r}\right]^{-\beta}\, dr \\
     &      & + \, 2 \nu(B(0,1)^c) \\
     &  =   & c_1 |\xi|^2 \int_{\log(2|\xi|)}^{\infty} s^{-\beta}\, ds
              + \frac{c_1}{4} \int_{\log 2}^{\log(2|\xi|)} e^{2s} s^{-\beta}\, ds + c_2 \\
     & \leq & c_3 |\xi|^2 \left[\log(2|\xi|)\right]^{1-\beta},
  \end{eqnarray*}
  since $\int_{\log 2}^x  e^{2s}s^{-\beta}\, ds \leq c_4 e^{2x}x^{-\beta+1},$ for $x>\log 2$.
  Similarly, we obtain
  \begin{eqnarray*}
    \Phi(\xi)
     &  =   & \int \left(1-\cos\left(\scalp{\xi}{y}\right)\right)\, \nu(dy) \\
     & \geq & c_5 \int_{|y|\leq 1/|\xi|} |\scalp{\xi}{y}|^2\, \nu(dy)\\
     & \geq & c_6 |\xi|^2 \int_0^{\frac{1}{|\xi|}} r^{-1}\left[\log\frac{2}{r}\right]^{-\beta}\, dr \\
     & =    & c_7 |\xi|^2 \left[\log(2|\xi|)\right]^{1-\beta},\quad |\xi|>1,
  \end{eqnarray*}
  and \eqref{eq:asympPhiPsi} is proved.
  
  For $s>1$, let $g(s)=s^2 \left[\log\left(2s\right)\right]^{1-\beta}$.
  The function $g$ is increasing on $[s_\beta,\infty)$ for some constant $s_\beta\geq 1,$ depending on $\beta$, so there
  exists an increasing inverse function $g^{-1}: [g(s_\beta),\infty))\to [s_\beta,\infty)$. We let
  $\eta(r)=\left( r \left[\log(2r)\right]^{\beta-1}\right)^{1/2}$ for $r>1$. Then there exists $r_\beta$ such that for $r>r_\beta$ we have 
\begin{align*}
    g \left(\eta(r)\right)
    &   = r\, \big(\log(2r)\big)^{\beta-1} \Big[\log\Big( 2 r^{1/2}(\log(2r))^{(\beta-1)/2} \Big)\Big]^{1-\beta}\\
    & =   r\, \big(\log (2r)\big)^{\beta-1} \left[\frac{1}{2} \log(4r) + \frac{1}{2}(\beta-1)\log\log (2r)\right]^{1-\beta}\\
    &= r \left[\frac{\log (4r) +(\beta-1)\log\log (2r)}{2\log (2r)}\right]^{1-\beta}\\
    &\asymp r.
\end{align*}
This shows that $g^{-1}(r)\asymp \eta(r)$ for $r>r_\beta$, and since $h(t)=1/\Psi^{-1}(1/t)\asymp 1/g^{-1}(1/t)$, it
follows that
\begin{displaymath}
  h(t) \asymp t^{1/2} \left[\log\left(\frac{2}{t}\right)\right]^{\frac{1-\beta}{2}},\quad t\in (0,1).
\end{displaymath}
Furthermore, it follows also that there exists constant $c_*>1$ such that
\begin{displaymath}
	g^{-1}(2r) \leq c_* g^{-1}(r),\quad r>g(s_\beta),
\end{displaymath}
and this, for $t<1/g(s_\beta)$, yields

\begin{eqnarray*}
  \int e^{-t\Phi(\xi)}|\xi|\, d\xi
  & \leq & \int_{|\xi|<g^{-1}(1/t)} |\xi|\, d\xi + 
           \int_{|\xi|\geq g^{-1}(1/ t) } e^{-tc_8 g(|\xi|)}|\xi|\,d\xi \\
  & \leq & c_9 \left( \left(g^{-1}(1/t) \right)^{d+1} + \int_{g^{-1}(1/t)}^\infty e^{-c_8 t g(s)} s^d\, ds \right) \\
  & \leq & c_{10} \left( h(t)^{-d-1} + \sum_{k=0}^\infty \int_{g^{-1}(2^k/t)}^{g^{-1}(2^{k+1}/t)} e^{-c_8 t g(s)} s^d\, ds \right) \\
  & \leq & c_{10} \left( h(t)^{-d-1} + 
           \sum_{k=0}^\infty e^{-c_8 2^k} \frac{1}{d+1}\left(g^{-1}\left(\frac{2^{k+1}}{t}\right)\right)^{d+1}\right) \\
  & \leq & c_{10} \left( h(t)^{-d-1} + 
           \frac{1}{d+1} \sum_{k=0}^\infty e^{-c_8 2^k} c_{*}^{(k+1)(d+1)} \left(g^{-1}\left(\frac{1}{t}\right)\right)^{d+1}\right) \\
  & \leq & c_{11} h(t)^{-d-1}.
\end{eqnarray*}

\end{proof}

Except of \eqref{Levy1} we do not assume here anything on the behavior of $\nu$ outside of the ball $B(0,1)$. However it follows easily from \eqref{eq:PsiH} that for every $\nu$ satisfying \eqref{eq:hi_assum} there exists $c_1$ 
such that
$\Psi(s)\geq c_1 s^2$,
for $s<1$, and so the condition \eqref{condition1} holds with $F(s) = c_2 s \left[ \log(e^{\beta-1}+s)\right]^{1-\beta}$. Furthermore, $F^{-1}(s)\asymp s \left[\log (e^{\beta-1}+s)\right]^{\beta-1}$ for $s>0$ 
and from Lemma \ref{below_exp2a} we obtain
\begin{displaymath}
  p_t(x) \geq c_3 h(t)^{-d} e^{-c_4(|x|^2/t) \log(e^{\beta-1}+c_5|x|/t)^{\beta-1} },
\end{displaymath}
for $t>0$, $x\in\Rd$.
If we consider $t<1$ and $|x|<1$, then we get
\begin{displaymath}
  p_t(x) \geq c_3 h(t)^{-d} e^{-c_4(|x|^2/t) \log(e^{\beta-1}+c_5/t)^{\beta-1} } \geq c_{6} h(t)^{-d} e^{-c_{7}(|x|/h(t))^2}.
\end{displaymath}

Combining the estimate with Proposition \ref{th:p_est_below} and \eqref{eq:p_est_below1}
we obtain
\begin{displaymath}
	p_t(x) \geq   c_{8} \min \left\{ t^{-d/2} \left(\log \frac{2}{t} \right)^{d(\beta-1)/2},\frac{t
                        }{|x|^{d+2} \left(\log\frac{2}{|x|}\right)^{\beta}} +\, h(t)^{-d}e^{ -c_{7}(|x|/h(t))^2} \right\},
\end{displaymath}
for $|x|<1$ and $t<1$.

If the L\'evy measure $\nu$ has a density which is bounded on $B(0,1)^c$ then from Theorem 1 in \cite{KSz2} we obtain
the following upper estimate.
\begin{displaymath}
	p_t(x) \leq   c_{8} \min \left\{ t^{-d/2} \left(\log \frac{2}{t} \right)^{d(\beta-1)/2},\frac{t
                        }{|x|^{d+2} \left(\log\frac{2}{|x|}\right)^{\beta}} +\, h(t)^{-d}e^{ \frac{-c_{9}|x|}{h(t)}
                  \log\left(1+\frac{c_{10}|x|}{h(t)}\right)}\right\},
\end{displaymath}
for $|x|<1$ and $t<1$.

We see that we do not have sharp both sides estimates in this case, 
however the new results of \cite{Mimica_new}
obtained for subordinated Brownian motion (contained in the case of $\beta=2$ here)
show that the lower estimate is optimal (note that $h(t)\leq |x|$ under the assumption
$t\Psi(|x|^{-1})\leq 1$ given in \cite{Mimica_new}).
%---------------------------------------------------------------------------------------------

\section{Appendix}

\begin{proof}[Proof of Proposition \ref{th:p_est_below}]
Similarly as in the proof of Theorem \ref{t:esttemp1} we consider
$\tilde{p}_t^r$ and $\bar{P}^r_t$, noting that \eqref{eq:FTildePEstimate} also holds and hence the densities
$\tilde{p}_t^r$ exist for every $t\in (0,t_p)$ and $r>0$.
First we will prove that
there exist constants $c_1=c_1(d)$, $c_2=c_2(d,M_0)$, $c_3=c_3(d)$ such that for every $a\in (0,1]$ we have
\begin{equation}\label{eq:TildaBelow}
  \tilde{p}^{h(at)}_t(y) \geq c_1 \left(h(t)\right)^{-d},  
\end{equation}
provided $|y|\leq c_2 e^{-c_3/a} h(t)$, $t\in (0,t_p)$.  

By symmetry of $\nu$ we have
$$
  \Fourier(\tilde{p}^{h(at)}_t)(\xi)
   \geq  |\Fourier(p_t)(\xi)|,\quad \xi\in\Rd,\, t\in (0,t_p),
$$ and this and Lemma 4 in \cite{KSz2} yield
\begin{eqnarray*}%\label{eq:g0}
  \tilde{p}^{h(at)}_t(0)
         & \geq & (2\pi)^{-d} \int e^{-t\Re(\Phi(\xi))}\, d\xi \\
         & \geq & c_4 \left(h(t)\right)^{-d}, \quad t\in (0,t_p),
\end{eqnarray*}
where the constant $c_4$ depends only on $d$.
It follows from (\ref{eq:FTildePEstimate}) that
$$
  |\Fourier(\tilde{p}^{h(at)}_t)(\xi)| \leq |\Fourier(p_t)(\xi)|e^{2t\nu(B(0,h(at))^c}
$$
and since by \eqref{eq:PsiH} we have $\nu(B(0,r)^c)\leq (1/L_0)\Psi(1/r)$, we obtain
\begin{displaymath}
	|\Fourier(\tilde{p}^{h(at)}_t)(\xi)| \leq e^{-t\Re(\Phi(\xi))} e^{2t\Psi(1/h(at))/L_0} = e^{-t\Re(\Phi(\xi))} e^{2/(aL_0)},
\end{displaymath}

and for every $j\in\{1,\dots,d\}$ by (\ref{Assum}) we get	
\begin{eqnarray*}
  \left|\frac{\partial \tilde{p}^{h(at)}_t }{\partial y_j}(y)\right|
  &   =  & \left|(2\pi)^{-d}\int
           (-i)\xi_j e^{-i\scalp{y}{\xi}}
           \Fourier(\tilde{p}^{h(at)}_t)(\xi)d\xi\right| \\
  & \leq & (2\pi)^{-d} e^{2/aL_0} \int_{\Rd} e^{-t\Re(\Phi(\xi))}|\xi|\, d\xi \\
    & \leq & c_6 e^{2/aL_0} \left(h(t)\right)^{-d-1},
\end{eqnarray*}
with $c_6=c_6(d,M_0)$.
It follows that
$$
  \tilde{p}^{h(at)}_t(y)\geq c_4\left(h(t)\right)^{-d}-dc_6 e^{2/aL_0} \left(h(t)\right)^{-d-1} |y|
  \geq \frac{1}{2}c_4 \left(h(t)\right)^{-d},
$$
provided  $|y|\leq \frac{c_4}{2dc_6}e^{-2/aL_0} h(t)$,
which clearly yields (\ref{eq:TildaBelow}).
  
Let $a\in(0,1)$ and $t\in (0,t_p)$.
For $r>0$, $|x|>r+h(at)$ by (\ref{eq:exp}) and \eqref{eq:PsiH} we get
%\begin{equation}\label{eq:BarNuBelow}
$$
  \bar{P}^{h(at)}_t(B(x,r)) \geq e^{-1/aL_0} t \bar{\nu}_{h(at)}(B(x,r)) = e^{-1/aL_0} t \nu (B(x,r)).
$$
%\end{equation}
This and (\ref{eq:TildaBelow}) for $x\in (0,t_p)$ yield 
\begin{eqnarray*}
  p_t(x) 
  &  =    & \tilde{p}^{h(at)}_t * \bar{P}_t^{h(at)}(x) \\
  &  =   & \int \tilde{p}_t^{h(at)}(x-z)\bar{P}_t^{h(at)}(dz) \\
  & \geq & c_1 \int_{|z-x|<c_2e^{-c_3/a }h(t)} 
             \left(h(t)\right)^{-d} \bar{P}_t^{h(at)}(dz) \\
  &  =   & c_1 \left(h(t)\right)^{-d} \bar{P}_t^{h(at)}(B(x,c_2e^{-c_3/a}h(t))) \\
  & \geq & c_1 e^{-1/aL_0} t\left(h(t)\right)^{-d} \nu(B(x,c_2e^{-c_3/a}h(t))), 
\end{eqnarray*}
provided $|x|>h(at)+c_2e^{-c_3/a}h(t)$.
Using the fact that $h(at)/h(t)\leq \sqrt{2a/L_0}$ for $a<L_0/2$ and $t>0$ (see the proof of 
Lemma 11 in \cite{KSz2}) we choose $a\in(0,1)$ such that
$h(at)/h(t)+c_2e^{-c_3/a}\leq \eta$ and we obtain (\ref{eq:p_est_below0}).
\end{proof}


\begin{thebibliography}{2}

\bibitem{BG60} R. M. Blumenthal and R. K. Getoor, {\it Some theorems on stable processes},
Trans. Amer. Math. Soc.  95 (1960), 263--273.

\bibitem{BGR13} K. Bogdan, T. Grzywny, M. Ryznar, {\it Density and tails of unimodal convolution semigroups}, 
  J. Funct. Anal 266, No. 6, 3543–-3571 (2014).
\bibitem{BJ07} K. Bogdan, T. Jakubowski, {\it Estimates of heat kernel of 
fractional Laplacian perturbed by gradient operators}, Comm. Math. Phys. 271 (1) 2007, 179--198.
\bibitem{BS2007} K. Bogdan, P. Sztonyk, {\it Estimates of potential kernel and
Harnack's  inequality for anisotropic fractional Laplacian,} Stud. Math. 181, No. 2, 101-123 (2007).
\bibitem{CKS} E.A. Carlen,  S. Kusuoka, D.W. Stroock, {\it Upper bounds for symmetric Markov transition functions,}
Ann. Inst. H. Poincaré, Probab. Stat. Suppl. 23, 245–287 (1987).
%\bibitem{CMS}
%R. Carmona, W.C. Masters, B. Simon
%{\it Relativistic Schr\"odinger operators: asymptotic behaviour of the eigenfunctions},
%J. Funct. Anal. 91, 1990, 117-142.
\bibitem{ChKimKum1} Z.-Q. Chen, P. Kim, T Kumagai,
{\it Global Heat Kernel Estimates for Symmetric Jump Processes},
Trans. Amer. Math. Soc. 363, no. 9, 5021--5055 (2011).
%\bibitem{ChKum} 
%  Z.-Q. Chen, T. Kumagai, {\it Heat kernel estimates for stable-like processes on $d$-sets},
%  Stochastic Process. Appl. 108,  no. 1, 27--62 (2003).
\bibitem{ChKum08} Z.-Q. Chen, T. Kumagai, {\it Heat kernel estimates for jump processes of mixed types on metric measure spaces},
  Probab. Theory Relat. Fields 140, No. 1-2, 277-317 (2008).
\bibitem{ChKimKum2} Z.-Q. Chen, P. Kim, T. Kumagai, Weighted Poincar\'e inequality and heat
kernel estimates for finite range jump processes, Math. Ann. 342 (2008),
No. 4, 833-883.
%\bibitem{ChKimSon} Z.-Q. Chen, P. Kim, R. Song, {\it Sharp Heat Kernel Estimates for Relativistic Stable Processes in Open Sets}, Annals of Probability 40 (1), 213-244, 2012. 
\bibitem{D91} J. Dziuba\'nski, {\it Asymptotic behaviour of
 densities of stable semigroups of measures,} Probab. Theory Related
 Fields 87 (1991), 459-467.
\bibitem{G93} P. G{\l}owacki, {\it Lipschitz continuity of densities of stable semigroups of measures,}
Colloq. Math. 66, No.1, 29-47 (1993).
\bibitem{GH93} P. G{\l}owacki, W. Hebisch, {\it Pointwise estimates for
    densities of stable semigroups of measures,} Studia Math. 104
  (1993), 243-258.
\bibitem{Grz2013} T. Grzywny, {\it On Harnack inequality and H\"older regularity for isotropic
unimodal L\'evy processes},  Potential Anal. 41, No. 1, 1–29 (2014).
\bibitem{H94} S. Hiraba, {\it Asymptotic behaviour of densities of multi-dimensional stable distributions,} Tsukuba J. Math. 18, No.1, 223--246 (1994).
\bibitem{H03} S. Hiraba, {\it Asymptotic estimates for densities of
multi-dimensional stable distributions},  Tsukuba J. Math.  27
(2003),  no. 2, 261--287.
%\bibitem{J1}
%  N. Jacob, {\it Pseudo differential operators and Markov
%  processes. Vol. I : Fourier analysis and semigroups.}
%  Imperial College Press, London (2001).
\bibitem{JKLSch2012} N. Jacob, V. Knopova, S. Landwehr, R. Schilling, {\it A geometric interpretation of the transition density of a
             symmetric {L}\'evy process,} Sci. China Math. 55 (2012), no. 6, 1099--1126.
  \bibitem{KSz1} K. Kaleta, P. Sztonyk, {\it Upper estimates of transition densities for stable-dominated semigroups,}  
    J. Evol. Equ. 13, No. 3, 633–-650 (2013).
  \bibitem{KSz2} K. Kaleta, P. Sztonyk, {\it Estimates of transition densities and their derivatives for jump L\'evy processes,} preprint.
  \bibitem{KSz3} K. Kaleta, P. Sztonyk, {\it Small time sharp bounds for kernels of convolution semigroups,} to appear in
    J. Anal. Math.
  \bibitem{Knop13} V. Knopova, {\it Compound kernel estimates for the transition probability density of a L\'evy process in $\R^n$},  
    Teor. Imovir. Mat. Stat. No. 89, 51--63 (2013); translation in Theory Probab. Math. Statist. No. 89, 57–-70 (2014).
\bibitem{KnopKul} V. Knopova, A. Kulik, {\it Exact asymptotic for distribution densities of L\'evy functionals.} 
  Electronic Journal of Probability 16, 1394-1433 (2011).
%\bibitem{KnopKul2} V. Knopova, A. Kulik, {\it Intrinsic small time estimates for distribution densities of
%L\'evy processes}, preprint.
%\bibitem{KSch} V. Knopova, R. Schilling, {\it A note on the existence of transition probability densities for Lévy processes.} To appear in: Forum Mathematicum
\bibitem{KSch2012} V. Knopova, R. Schilling, {\it Transition density estimates for a class of L\'evy and L\'evy-type processes.} J. Theoret. Probab. 25 (1), 144-170 (2012).
\bibitem{KR13} T. Kulczycki, M. Ryznar,  {\it Gradient estimates of harmonic functions and transition densities for L\'evy processes}, 
  Trans. Amer. Math. Soc., to appear, arXiv:1307.7158.
\bibitem{Mimica1} A. Mimica, {\it Heat kernel upper estimates for symmetric jump processes with small jumps of high intensity.} Potential Anal. 36 (2012), no. 2, 203–-222. 
\bibitem{Mimica_new} A. Mimica, {\it Heat kernel estimates for subordinate Brownian motions}, preprint.
%\bibitem{Mimica2} A. Mimica, {\it Harnack Inequality and H\"older Regularity Estimates for a L\'evy Process with Small Jumps of High Intensity.} J. Theoret. Probab. 26 (2013), no. 2, 329–348. 
%\bibitem{P96} J. Picard, {\it On the existence of small densities for jump processes,} Probab.\ Theor.\ Rel.\ Fields 105 (1996), 481--511
\bibitem{Picard97} J. Picard, {\it Density in small time at accessible points for jump processes,}
Stochastic Process. Appl. 67 (1997), no. 2, 251–-279.
%\bibitem{P81} W.E. Pruitt, {\it The growth of random walks and Lévy processes.}
%Ann. Probab. 9 (1981), no. 6, 948–-956. 
\bibitem{PT69} W.E. Pruitt, S.J. Taylor, {\it The potential kernel and hitting probabilities for the general stable process in $R\sp N$,} Trans. Am. Math. Soc. 146 (1969), 299-321.  
% \bibitem{Ryz} M. Ryznar, {\it Estimates of Green functions for relativistic a-stable process}, Potential Anal. 17 (2002)
% 1–23.       
%\bibitem{Sato} K.-I. Sato, {\it L\' evy Processes and Infinitely Divisible 
%             Distributions}, Cambridge University Press, 1999.
%\bibitem{Sch98} R. Schilling, {\it Growth and H\"older conditions for the sample paths of Feller processes.}
%Probab. Theory Related Fields 112 (1998), no. 4, 565–-611. 
\bibitem{SchSW12} R. Schilling, P. Sztonyk and J. Wang, {\it Coupling property and gradient estimates for L\'evy processes via the symbol.}
Bernoulli 18 (2012), 1128-1149.
\bibitem{S10a} 
  P. Sztonyk, {\it  Regularity of harmonic functions for anisotropic fractional Laplacians.}
   Math. Nachr. 283 (2010) No. 2, 289-311.
\bibitem{S10} 
  P. Sztonyk, {\it Estimates of tempered stable densities,} J. Theoret. Probab. 23 (2010) No. 1, 127-147 .
\bibitem{S11}
  P. Sztonyk, {\it Transition density estimates for jump L\'evy processes,} Stochastic Process. Appl. 121 (2011), 1245-1265.
\bibitem{W07} T. Watanabe, {\it Asymptotic estimates of
multi-dimensional stable densities and their applications},
Trans. Am. Math. Soc. 359, No. 6, 2851-2879 (2007).


\end{thebibliography}
\end{document}